\pdfoutput=1

\documentclass{amsproc}

\usepackage{verbatim}
\usepackage{xcolor}
\usepackage{tikz}
\usepackage{tikz-cd}
\usepackage{arydshln}
\usepackage{caption}
\usepackage{subcaption}
\usepackage{graphicx}
\captionsetup[subfigure]{labelfont=rm}
\usepackage[hyphens]{url}
\usepackage{amsmath}
\usepackage{bbm}
\usepackage{hyperref}

\theoremstyle{theorem}
\newtheorem{theorem}{Theorem}[section]
\newtheorem{proposition}[theorem]{Proposition}
\newtheorem{lemma}[theorem]{Lemma}
\newtheorem{corollary}[theorem]{Corollary}

\theoremstyle{definition}
\newtheorem{definition}[theorem]{Definition}

\theoremstyle{remark}
\newtheorem{remark}[theorem]{Remark}

\numberwithin{equation}{section}

\begin{document}

\title[Counting square-tiled surfaces with prescribed foliations]{Counting square-tiled surfaces with prescribed real and imaginary foliations and connections to Mirzakhani's asymptotics for simple closed hyperbolic geodesics}

\author{Francisco Arana--Herrera}
\address{Department of Mathematics, Stanford University, 450 Serra Mall
	Building 380, Stanford, CA 94305-2125, USA}
\email{farana@stanford.edu}

\begin{abstract}
We show that the number of square-tiled surfaces of genus $g$, with $n$ marked points, with one or both of its horizontal and vertical foliations belonging to fixed mapping class group orbits, and having at most $L$ squares, is asymptotic to $L^{6g-6+2n}$ times a product of constants appearing in Mirzakhani's count of simple closed hyperbolic geodesics. Many of the results in this paper reflect recent discoveries of Delecroix, Goujard, Zograf, and Zorich, but the approach considered here is very different from theirs. We follow conceptual and geometric methods inspired by Mirzakhani's work. 
\end{abstract}

\maketitle

\thispagestyle{empty}

\tableofcontents

\section{Introduction}

On any closed hyperbolic surface of genus two, the ratio of the number of separating versus non-separating simple closed geodesics of length $\leq L$ converges to $1/48$ as $L \to \infty$. This surprisingly precise result, together with generalizations to arbitrary complete, finite volume hyperbolic surfaces, was proved by Mirzakhani in \cite{Mir08b}. Recently, in \cite{DGZZ19}, Delecroix, Goujard, Zograf, and Zorich proved analogous counting results for square-tiled surfaces of finite type and a posteriori related these results to Mirzakhani's asymptotics by comparing explicit formulas for the asymptotics of each counting problem. In particular, their work shows that the ratio of the number of square-tiled surfaces of genus two having one horizontal cylinder with separating versus non-separating core curve and at most $L$ squares converges to $1/48$ as $L \to \infty$.\\

The main goal of this paper is to establish a direct connection between these counting results. The novelty of our approach is to use the parametrization of non-zero, integrable, holomorphic quadratic differentials in terms of filling pairs of measured geodesic laminations to establishing a direct connection between these counting problems and to use a famous result of Hubbard, Masur, and Gardiner, which we recall as Theorem \ref{hubbard_masur_slice} below. This approach requires understanding the stabilizers of the corresponding mapping class group actions and certain no escape of mass properties of related counting measures.\\

\textit{Main results.} Let $S_{g,n}$ be a connected, oriented, smooth surface of genus $g$ with $n$ punctures and negative Euler characteristic. Let $\text{Mod}_{g,n}$ be the mapping class group of $S_{g,n}$. Let $Q\mathcal{M}_{g,n}$ be the moduli space of non-zero, integrable, holomorphic quadratic differentials on $S_{g,n}$. Let $Q\mathcal{M}_{g,n}(\mathbf{Z}) \subseteq Q\mathcal{M}_{g,n}$ be the subset of all square-tiled surfaces in $Q\mathcal{M}_{g,n}$. Every square-tiled surface $[(X,q)] \in Q\mathcal{M}_{g,n}(\mathbf{Z})$ is horizontally and vertically periodic. The core curves of the horizontal and vertical cylinders of $[(X,q)]$ define integral muti-curves $\gamma_1 := \textbf{Re}([(X,q)])$ and $\gamma_2 := \textbf{Im}([(X,q)])$ on $X$, respectively. Two multi-curves on diffeomorphic surfaces are of the same topological type if there is a diffeomorphism between the surfaces carrying one multi-curve to the other. The topological type of a multi-curve $\gamma$ on a surface diffeomorphic to $S_{g,n}$ is its equivalence class $[\gamma]$ with respect to this equivalence relation. \\

Fix two integral multi-curves $\gamma_1$ and $\gamma_2$ on $S_{g,n}$. We are interested in the growth as $L \to \infty$ of the following quantity:
\begin{equation}
\label{counting_function_square_tiled_surfaces}
s(\gamma_1,\gamma_2,L) = \sum_{\substack{[(X,q)] \in Q\mathcal{M}_{g,n}(\mathbf{Z}), \\ \textbf{Re}([(X,q)]) \in [\gamma_1], \ \textbf{Im}([(X,q)]) \in [\gamma_2], \\ \text{Area}([(X,q)]) \leq L }} \frac{1}{\# \text{Aut}([(X,q)])}.
\end{equation}
More concretely, $s(\gamma_1,\gamma_2,L)$ is the automorphism weighted count of the number of square-tiled surfaces of area $\leq L$ (or equivalently, having at most $L$ squares), whose horizontal cylinders have core multi-curve of topological type $[\gamma_1]$, and whose vertical cylinders have core multi-curve of topological type $[\gamma_2]$. Notice that $s(\gamma_1,\gamma_2,L)$ is finite because there are only finitely many square-tiled surfaces with area $\leq L$.\\

One of the main results of this paper is the following theorem, which gives a precise description of the growth of $s(\gamma_1,\gamma_2,L)$ as $L \to \infty$.\\

\begin{theorem}
	\label{counting_quadratic_differentials_intro}
	For any pair of integral multi-curves $\gamma_1$ and $\gamma_2$ on $S_{g,n}$,
	\[
	\lim_{L \to \infty} \frac{s(\gamma_1,\gamma_2,L)}{L^{6g-6+2n}} = \frac{c(\gamma_1) \cdot c(\gamma_2)}{2^{2g-3+n} \cdot b_{g,n}},
	\]
	where $c(\gamma_1), c(\gamma_2) \in \mathbf{Q}_{>0}$ are the frequencies of integral multi-curves on $S_{g,n}$ of topological type $[\gamma_1], [\gamma_2]$ defined in (5.2) in \cite{Mir08b} and $b_{g,n} \in \mathbf{Q}_{>0} \cdot \pi^{6g-6+2n}$ is the constant depending only on $g$ and $n$ defined in (3.4) in \cite{Mir08b}.\\
\end{theorem}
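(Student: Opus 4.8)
The plan is to set up a bijective correspondence between square-tiled surfaces with prescribed horizontal and vertical topological types and integer points in a certain region of $\mathcal{ML}_{g,n} \times \mathcal{ML}_{g,n}$, and then to count those integer points by an equidistribution/asymptotic lattice-point argument. More precisely, I would use the Hubbard--Masur--Gardiner parametrization (Theorem \ref{hubbard_masur_slice}) identifying a non-zero integrable holomorphic quadratic differential with the pair $(\textbf{Re}, \textbf{Im})$ of its horizontal and vertical measured foliations, subject to the constraint that this pair \emph{fills} the surface. Under this identification the square-tiled surfaces correspond exactly to pairs of \emph{integral} multi-curves $(\lambda_1,\lambda_2)$ that fill, with the number of squares (the area) given by the geometric intersection number $i(\lambda_1,\lambda_2)$. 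Thus, after passing to a fundamental domain for the $\text{Mod}_{g,n}$-action, $s(\gamma_1,\gamma_2,L)$ becomes, up to the automorphism weighting which is precisely what a fundamental-domain count produces, the number of pairs $(\alpha\cdot\gamma_1, \beta\cdot\gamma_2)$ with $\alpha,\beta \in \text{Mod}_{g,n}$ (modulo the respective stabilizers $\text{Stab}(\gamma_1)$, $\text{Stab}(\gamma_2)$) such that $i(\alpha\gamma_1,\beta\gamma_2)\le L$.

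Next I would invoke the scaling symmetry: $\mathcal{ML}_{g,n}$ carries a piecewise-linear integral structure with the Thurston volume form $\mu_{\mathrm{Th}}$, homogeneous of degree $6g-6+2n$, and the intersection pairing $i(\cdot,\cdot)$ is homogeneous of degree $1$ in each variable. So the rescaled counting measures $\frac{1}{L^{6g-6+2n}}\sum_{\alpha}\delta_{\alpha\gamma_1/L}$ on $\mathcal{ML}_{g,n}$ converge, by Mirzakhani's results (this is exactly where $c(\gamma_1)$ enters as the leading coefficient, via her Theorem in \cite{Mir08b}: the number of $\alpha\gamma_1$ with $\ell(\alpha\gamma_1)\le L$ grows like $c(\gamma_1)\,\tfrac{B(X)}{b_{g,n}}L^{6g-6+2n}$, or more intrinsically these measures converge to $c(\gamma_1)\cdot\mu_{\mathrm{Th}}$) to a multiple of the Thurston measure. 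Taking the product of the two rescaled counting measures on $\mathcal{ML}_{g,n}\times\mathcal{ML}_{g,n}$ and integrating against the indicator of the region $\{i(\lambda_1,\lambda_2)\le 1\}$ intersected with the filling locus, I would get
\[
\lim_{L\to\infty}\frac{s(\gamma_1,\gamma_2,L)}{L^{6g-6+2n}} = c(\gamma_1)\,c(\gamma_2)\cdot (\mu_{\mathrm{Th}}\times\mu_{\mathrm{Th}})\big(\{(\lambda_1,\lambda_2) : \lambda_1,\lambda_2 \text{ fill}, \ i(\lambda_1,\lambda_2)\le 1\}\big),
\]
and the remaining task is to identify this last Thurston-measure volume with $\frac{1}{2^{2g-3+n}b_{g,n}}$. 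Here I would use the Hubbard--Masur map once more: it pushes $\mu_{\mathrm{Th}}\times\mu_{\mathrm{Th}}$ (restricted to the filling locus) to a natural measure on $Q\mathcal{M}_{g,n}$ in the Lebesgue class, and Masur--Veech-type volume computations — together with the factor of $2$ accounting for the difference between quadratic differentials and pairs $(\textbf{Re},\textbf{Im})$ being recorded up to the hyperelliptic-type ambiguity $q \leftrightarrow$ same foliations, and the $2^{-(2g-3+n)}$ normalization relating $i(\cdot,\cdot)\le 1$ to the area-$\le 1$ locus — produce exactly the constant $b_{g,n}$ from \cite{Mir08b}. Concretely, $b_{g,n}$ is defined in \cite{Mir08b} as the Thurston volume of the unit ball $\{\lambda : \ell_X(\lambda)\le 1\}$ integrated over moduli space, and the filling-pairs volume relates to it by an unfolding over the Teichmüller space fibers of the Hubbard--Masur map, with the power of $2$ coming from the coordinatewise square-root in passing from $q$ to its foliations.

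The two main obstacles I anticipate are: (i) \emph{no escape of mass} — one must rule out the possibility that, as $L\to\infty$, a positive proportion of the integer pairs $(\alpha\gamma_1,\beta\gamma_2)$ with $i\le L$ has $\alpha\gamma_1$ (or $\beta\gamma_2$) drifting off to a non-filling boundary stratum or off to infinity in a fundamental domain; this requires a compactness/recurrence argument, presumably using properness of the intersection pairing on the filling locus and Mirzakhani's integrability estimates, and it is the technical heart of the paper. And (ii) pinning down the exact numerical constant $2^{2g-3+n}b_{g,n}$ — tracking every factor of $2$ through the Hubbard--Masur normalization, the identification of the area with $i(\textbf{Re},\textbf{Im})$, and the precise definitions (3.4) and (5.2) in \cite{Mir08b} — which is routine in spirit but delicate in bookkeeping. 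The equidistribution input (Mirzakhani's theorem that $\frac{1}{L^{6g-6+2n}}\sum_{\alpha}\delta_{\alpha\gamma/L}\to c(\gamma)\mu_{\mathrm{Th}}$ weakly) I would take as a black box, so the real work is upgrading it to a \emph{product} statement on $\mathcal{ML}_{g,n}^2$ against the (not compactly supported, not continuous) test function $\mathbbm{1}_{\{i\le 1,\ \mathrm{filling}\}}$, which is precisely what the no-escape-of-mass step secures.
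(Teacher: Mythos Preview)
Your overall architecture—Hubbard--Masur to turn square-tiled surfaces into filling pairs of integral multi-curves with area equal to intersection number, then Mirzakhani's equidistribution plus a no-escape-of-mass argument—matches the paper. But there are two substantive divergences worth flagging.

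\textbf{Asymmetric vs.\ symmetric reduction.} The paper does \emph{not} work with the product $\mu_{\gamma_1}^L\times\mu_{\gamma_2}^L$ on $\mathcal{ML}_{g,n}^2$. Instead it fixes the first coordinate at $\gamma_1$, reducing the count to a sum over $[\beta]\in\mathcal{ML}_{g,n}(\gamma_1)/\text{Stab}(\gamma_1)$ with $\beta\in\text{Mod}_{g,n}\cdot\gamma_2$ and $i(\gamma_1,\beta)\le L$. Only \emph{one} counting measure $\mu_{\gamma_2}^L$ is rescaled and only \emph{one} factor of $c(\gamma_2)/b_{g,n}$ appears at this stage, yielding
\[
\lim_{L\to\infty}\frac{s(\gamma_1,\gamma_2,L)}{L^{6g-6+2n}}=\frac{c(\gamma_2)}{b_{g,n}}\cdot\widehat{\mu}_{\text{Thu}}\big(\widehat{B}(\gamma_1)\big).
\]
Incidentally, Mirzakhani's theorem gives $\mu_\gamma^L\to\tfrac{c(\gamma)}{b_{g,n}}\mu_{\text{Thu}}$, not $c(\gamma)\mu_{\text{Thu}}$; your displayed limit is off by a factor of $b_{g,n}^2$.

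\textbf{The constant.} This is where your proposal has a genuine gap. You propose to identify the remaining Thurston (or product-Thurston) volume with $1/(2^{2g-3+n}b_{g,n})$ by ``Masur--Veech-type volume computations'' and an unfolding over Hubbard--Masur fibers. But relating the Masur--Veech volume of $Q\mathcal{M}_{g,n}$ to $b_{g,n}$ is precisely Corollary~\ref{masur_veech_volume} of the paper, which is \emph{derived from} the theorem you are trying to prove; invoking it here is circular. Your heuristic for the power of $2$ (a ``coordinatewise square-root in passing from $q$ to its foliations'') is also not where the factor comes from. The paper's device is instead a symmetry trick: since $s(\gamma_1,\gamma_2,L)=s(\gamma_2,\gamma_1,L)$, one gets
\[
\frac{\widehat{\mu}_{\text{Thu}}(\widehat{B}(\gamma_1))}{c(\gamma_1)}=\frac{\widehat{\mu}_{\text{Thu}}(\widehat{B}(\gamma_2))}{c(\gamma_2)}=:r_{g,n},
\]
independent of the multi-curve. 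One then \emph{computes} $r_{g,n}$ for a single convenient choice—a pants decomposition $\mathcal{P}$—using Dehn--Thurston coordinates. The $2^{2g-3+n}$ arises concretely as the index of the Dehn--Thurston parameter semigroup $\Lambda$ inside $(\mathbf{Z}_{\ge 0}\times\mathbf{Z})^{3g-3+n}$, coming from the parity conditions imposed by the $2g-2+n$ complementary pairs of pants (one condition being redundant). This symmetry-plus-explicit-computation step is the main idea you are missing; the no-escape-of-mass argument you correctly anticipate is handled in the paper via real period coordinates on finitely many parallelogram charts.
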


Given an integral multi-curve $\gamma$ on $S_{g,n}$, we are also interested in the growth as $L \to \infty$ of the following quantity:
\begin{equation}
\label{counting_square_tiled_surfaces_2}
s(\gamma,*,L) = \sum_{\substack{[(X,q)] \in Q\mathcal{M}_{g,n}(\mathbf{Z}), \\ \textbf{Re}([(X,q)]) \in [\gamma], \\ \text{Area}([(X,q)]) \leq L }} \frac{1}{\# \text{Aut}([(X,q)])}.
\end{equation}
$ $ \vspace{+0.2cm}

The same arguments used in the proof of Theorem \ref{counting_quadratic_differentials_intro} give the following result.\\

\begin{theorem}
	\label{counting_quadratic_differentials_2_intro}
	For any integral multi-curve $\gamma$ on $S_{g,n}$,
	\[
	\lim_{L \to \infty} \frac{s(\gamma,*,L)}{L^{6g-6+2n}} = \frac{c(\gamma)}{2^{2g-3+n}},
	\]
	where $c(\gamma) \in \mathbf{Q}_{>0}$ is the frequency of integral multi-curves on $S_{g,n}$ of topological type $[\gamma]$ defined in (5.2) in \cite{Mir08b}.\\
\end{theorem}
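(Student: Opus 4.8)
The plan is to reduce Theorem~\ref{counting_quadratic_differentials_2_intro} to Theorem~\ref{counting_quadratic_differentials_intro} by summing over all possible topological types of the imaginary foliation. Concretely, for a fixed integral multi-curve $\gamma$ on $S_{g,n}$ one has the tautological decomposition
\[
s(\gamma,*,L) = \sum_{[\gamma']} s(\gamma,\gamma',L),
\]
where the sum runs over all topological types $[\gamma']$ of integral multi-curves on $S_{g,n}$ that can arise as the imaginary foliation of a square-tiled surface whose real foliation has type $[\gamma]$. The first step is to observe that although this is a priori an infinite sum, only finitely many terms are nonzero for each fixed $L$, and more importantly, after dividing by $L^{6g-6+2n}$ and passing to the limit, the interchange of limit and sum is justified by the no escape of mass results alluded to in the introduction: the total counting measure associated with $s(\gamma,*,L)/L^{6g-6+2n}$ converges, with no loss of mass, to a measure which decomposes as a countable sum of the measures controlling each $s(\gamma,\gamma',L)/L^{6g-6+2n}$.

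Granting this, the second step is to apply Theorem~\ref{counting_quadratic_differentials_intro} term by term, which gives
\[
\lim_{L \to \infty} \frac{s(\gamma,*,L)}{L^{6g-6+2n}} = \sum_{[\gamma']} \frac{c(\gamma) \cdot c(\gamma')}{2^{2g-3+n} \cdot b_{g,n}} = \frac{c(\gamma)}{2^{2g-3+n} \cdot b_{g,n}} \sum_{[\gamma']} c(\gamma').
\]
The third step is then purely about the Mirzakhani constants: one must identify $\sum_{[\gamma']} c(\gamma')$, the sum of the frequencies over \emph{all} topological types of integral multi-curves, with $b_{g,n}$. This is exactly the statement that $b_{g,n}$ is the leading coefficient of the total count of all integral multi-curves of length (in a suitable sense) at most $L$, i.e.\ that the Thurston measure of the unit ball in $\mathcal{ML}_{g,n}$ equals $b_{g,n}$ and that this Thurston measure is recovered as the sum over mapping class group orbits of the individual orbit frequencies $c(\gamma')$. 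This identity is implicit in \cite{Mir08b}; I would cite the relevant formula (the definition (3.4) of $b_{g,n}$ together with the orbit decomposition of the Thurston measure) to conclude that the inner sum is precisely $b_{g,n}$, yielding the claimed limit $c(\gamma)/2^{2g-3+n}$.

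The main obstacle is the first step: justifying that there is no escape of mass when one sums over infinitely many topological types $[\gamma']$ of imaginary foliations. One has to rule out the possibility that a positive proportion of square-tiled surfaces with real foliation of type $[\gamma]$ and at most $L$ squares has imaginary foliation whose topological type ``escapes to infinity'' (e.g.\ has intersection number with $\gamma$ growing, or lies in orbits of arbitrarily high complexity) as $L \to \infty$. This should follow from the same equidistribution and no-escape-of-mass machinery developed for Theorem~\ref{counting_quadratic_differentials_intro}: the relevant counting measures on the appropriate slice of $Q\mathcal{M}_{g,n}$ converge to a finite measure of the correct total mass, and the contribution of each $[\gamma']$ is a nonnegative piece of that limiting measure, so Fatou's lemma gives one inequality for free and the matching total mass (coming from the identification of the limit of $s(\gamma,*,L)/L^{6g-6+2n}$ via Hubbard--Masur and the Thurston measure computation) forces equality, hence convergence of the series to the full limit. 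In other words, the no-escape-of-mass statement that underlies the proof of Theorem~\ref{counting_quadratic_differentials_intro} is exactly what upgrades the term-by-term application of that theorem into the desired identity.
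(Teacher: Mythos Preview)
Your approach is different from the paper's and takes an unnecessary detour that is partly circular. The paper's proof is essentially one line: rerun the argument for Theorem~\ref{counting_quadratic_differentials_intro} with the full counting measures $\mu^L$ of (\ref{ML_counting_measure}) in place of the orbit counting measures $\mu_{\gamma_2}^L$. Since $\mu^L \to \mu_{\text{Thu}}$ is the \emph{definition} of the Thurston measure, Mirzakhani's Theorem~\ref{mirzakhani_measure_convergence_intro} is not even invoked. The no-escape-of-mass argument was in fact already carried out for $\widehat{\mu}^L$ (recall that the proof of Proposition~\ref{no_escape_of_mass} begins by reducing from $\widehat{\mu}_{\gamma_2}^L$ to $\widehat{\mu}^L$ via $\widehat{\mu}_{\gamma_2}^L \leq \widehat{\mu}^L$), so one gets directly
\[
\frac{s(\gamma,*,L)}{L^{6g-6+2n}} = \widehat{\mu}^L(\widehat{B}(\gamma)) \longrightarrow \widehat{\mu}_{\text{Thu}}(\widehat{B}(\gamma)) = r_{g,n}\, c(\gamma) = \frac{c(\gamma)}{2^{2g-3+n}},
\]
the last identity coming from Section~4 together with the symmetry argument in the proof of Theorem~\ref{counting_quadratic_differentials_intro}. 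No decomposition over imaginary types $[\gamma']$ and no identity $\sum_{[\gamma']} c(\gamma') = b_{g,n}$ enters.

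The circularity in your route is this: to justify interchanging the limit with the infinite sum by ``matching total mass'', you must already know that $\lim_{L\to\infty} s(\gamma,*,L)/L^{6g-6+2n}$ exists and equals $\widehat{\mu}_{\text{Thu}}(\widehat{B}(\gamma))$. But that is precisely the no-escape-of-mass statement for $\widehat{\mu}^L$, and once you have it the theorem is finished by plugging in the known value of $\widehat{\mu}_{\text{Thu}}(\widehat{B}(\gamma))$; the orbit decomposition becomes superfluous. Fatou alone only yields $\sum_{[\gamma']} c(\gamma') \leq b_{g,n}$, so the reverse inequality would have to be imported separately. Note also that your description of $b_{g,n}$ is not accurate: it is not the Thurston measure of a single unit ball in $\mathcal{ML}_{g,n}$, but the integral $\int_{\mathcal{M}_{g,n}} \mu_{\text{Thu}}(B_X)\, d\widehat{\mu}_{wp}(X)$ over moduli space; the identity $\sum_{[\gamma']} c(\gamma') = b_{g,n}$ is true, but it itself requires an interchange of limit, sum, and integral of the same flavor as what you are trying to justify.
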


Consider the following normalization of the Masur-Veech volume of $Q\mathcal{M}_{g,n}$,
\[
m_{g,n} := \lim_{L \to \infty} \frac{1}{L^{12g-12+4n}} \sum_{\substack{[(X,q)] \in Q\mathcal{M}_{g,n}(\mathbf{Z}), \\ \text{Area}([(X,q)]) \leq L}} \frac{1}{\# \text{Aut}([(X,q)])}.
\]
Theorem 1.19 in \cite{DGZZ16} shows that the horizontal and vertical cylinder decompositions of square-tiled surfaces are uncorrelated. This result together with Theorem \ref{counting_quadratic_differentials_2_intro} give the following alternative version of Theorem \ref{counting_quadratic_differentials_intro}.\\

\begin{theorem}
	\label{counting_quadratic_differentials_v2_intro}
	For any pair of integral multi-curves $\gamma_1$ and $\gamma_2$ on $S_{g,n}$,
	\[
	\lim_{L \to \infty} \frac{s(\gamma_1,\gamma_2,L)}{L^{6g-6+2n}} = \frac{c(\gamma_1) \cdot c(\gamma_2)}{2^{4g-6+2n} \cdot m_{g,n}},
	\]
	where $c(\gamma_1), c(\gamma_2) \in \mathbf{Q}_{>0}$ are the frequencies of integral multi-curves on $S_{g,n}$ of topological type $[\gamma_1], [\gamma_2]$ defined in (5.2) in \cite{Mir08b}.\\
\end{theorem}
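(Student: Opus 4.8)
The plan is to derive Theorem \ref{counting_quadratic_differentials_v2_intro} purely by combining Theorem \ref{counting_quadratic_differentials_2_intro} with the Masur-Veech volume normalization $m_{g,n}$ and the non-correlation result from \cite{DGZZ16}. No new geometric input should be needed; this is a reconciliation of two expressions for the same limit.

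First I would set up the relevant counting measures. Fix the topological type $[\gamma_1]$ of the horizontal multi-curve. The non-correlation statement (Theorem 1.19 in \cite{DGZZ16}) says, roughly, that conditioning on the horizontal cylinder decomposition being of type $[\gamma_1]$ does not bias the distribution of the vertical decomposition: the joint count factors, asymptotically, as the product of the individual counts divided by the total count. Concretely I would write
\[
s(\gamma_1,\gamma_2,L) \sim \frac{s(\gamma_1,*,L) \cdot s(*,\gamma_2,L)}{s(*,*,L)}
\]
as $L \to \infty$, where $s(*,*,L)$ is the full automorphism-weighted count of square-tiled surfaces of area $\leq L$. The subtlety here is bookkeeping the exponents of $L$: each of $s(\gamma_1,*,L)$ and $s(*,\gamma_2,L)$ grows like $L^{6g-6+2n}$, while $s(*,*,L)$ grows like $L^{12g-12+4n}$, so the exponents match on both sides and the ratio indeed grows like $L^{6g-6+2n}$. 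I would make the non-correlation statement precise in the form needed, being careful that \cite{DGZZ16} may phrase it in terms of Siegel-Veech-type constants or densities rather than the raw counts, and translate accordingly.

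Next I would substitute the known asymptotics. By Theorem \ref{counting_quadratic_differentials_2_intro}, $s(\gamma_1,*,L) \sim \frac{c(\gamma_1)}{2^{2g-3+n}} L^{6g-6+2n}$, and by the same theorem applied with the roles of horizontal and vertical reversed (which is legitimate since the real and imaginary foliations play symmetric roles in the definition of $Q\mathcal{M}_{g,n}(\mathbf{Z})$, via the $i \mapsto -i$ or $\pi/2$-rotation symmetry of the torus), $s(*,\gamma_2,L) \sim \frac{c(\gamma_2)}{2^{2g-3+n}} L^{6g-6+2n}$. By definition of $m_{g,n}$, $s(*,*,L) \sim m_{g,n} L^{12g-12+4n}$. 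Plugging these into the non-correlation factorization gives
\[
\lim_{L\to\infty} \frac{s(\gamma_1,\gamma_2,L)}{L^{6g-6+2n}} = \frac{c(\gamma_1) c(\gamma_2)}{2^{2(2g-3+n)} m_{g,n}} = \frac{c(\gamma_1) c(\gamma_2)}{2^{4g-6+2n} m_{g,n}},
\]
which is the claimed formula. As a consistency check, comparing with Theorem \ref{counting_quadratic_differentials_intro} this forces the identity $m_{g,n} = 2^{2g-3+n} b_{g,n}$, and I would remark on this as it provides independent verification (and recovers a known relation between Masur-Veech volumes and Mirzakhani's constants).

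The main obstacle I anticipate is making the non-correlation input rigorous in exactly the form $s(\gamma_1,\gamma_2,L) \sim s(\gamma_1,*,L)\, s(*,\gamma_2,L)/s(*,*,L)$: the result in \cite{DGZZ16} is stated for individual stratum components and in the language of cylinder statistics, so I need to sum over the relevant strata, check that the dominant contribution comes from the principal stratum (where $6g-6+2n$ appears as the relevant dimension), and confirm that the error terms are genuinely lower order after this summation. Everything else — substituting the two asymptotic formulas and the volume definition — is routine arithmetic with the exponents of $2$ and of $L$.
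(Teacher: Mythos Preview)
Your approach is exactly the one the paper takes: it derives Theorem~\ref{counting_quadratic_differentials_v2_intro} by combining Theorem~\ref{counting_quadratic_differentials_2_intro} (applied to each of the horizontal and vertical foliations) with the non-correlation result Theorem~1.19 of \cite{DGZZ16} and the definition of $m_{g,n}$. One small slip in your consistency check: equating the two expressions gives $m_{g,n} = b_{g,n}/2^{2g-3+n}$ (as in Corollary~\ref{masur_veech_volume}), not $m_{g,n} = 2^{2g-3+n} b_{g,n}$.
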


Combining Theorems \ref{counting_quadratic_differentials_intro} and \ref{counting_quadratic_differentials_v2_intro} gives the following formula for the Masur-Veech volumes $m_{g,n}$.\\

\begin{corollary}
	\label{masur_veech_volume}
	For every $g,n \in \mathbf{Z}_{\geq 0}$ with $2 -2g- n < 0$,
	\[
	m_{g,n} = \frac{b_{g,n}}{2^{2g-3+n}}.
	\]
\end{corollary}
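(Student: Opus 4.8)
The plan is to deduce the formula purely formally from the two asymptotic identities already recorded as Theorems \ref{counting_quadratic_differentials_intro} and \ref{counting_quadratic_differentials_v2_intro}; all of the analytic content lives in those theorems, so nothing new needs to be proved here. First I would fix a convenient pair of multi-curves for which the relevant frequencies do not vanish. Since $2 - 2g - n < 0$, the surface $S_{g,n}$ carries at least one essential simple closed curve, so we may take $\gamma_1$ to be a single integral simple closed curve and set $\gamma_2 := \gamma_1$. By construction the Mirzakhani frequencies satisfy $c(\gamma_1), c(\gamma_2) \in \mathbf{Q}_{>0}$, hence $c(\gamma_1) \cdot c(\gamma_2) > 0$.

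Next I would evaluate $\displaystyle \lim_{L \to \infty} s(\gamma_1,\gamma_2,L)/L^{6g-6+2n}$ in the two ways supplied by our theorems. Theorem \ref{counting_quadratic_differentials_intro} gives the value $\dfrac{c(\gamma_1)\,c(\gamma_2)}{2^{2g-3+n}\,b_{g,n}}$, while Theorem \ref{counting_quadratic_differentials_v2_intro} gives the value $\dfrac{c(\gamma_1)\,c(\gamma_2)}{2^{4g-6+2n}\,m_{g,n}}$. Both limits exist and coincide, so
\[
\frac{c(\gamma_1)\,c(\gamma_2)}{2^{2g-3+n}\,b_{g,n}} = \frac{c(\gamma_1)\,c(\gamma_2)}{2^{4g-6+2n}\,m_{g,n}}.
\]
Cancelling the common positive factor $c(\gamma_1)\,c(\gamma_2)$ and cross-multiplying yields $2^{4g-6+2n}\, m_{g,n} = 2^{2g-3+n}\, b_{g,n}$, and dividing through by $2^{4g-6+2n}$ gives $m_{g,n} = b_{g,n}/2^{2g-3+n}$, as claimed.

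There is no real obstacle at this stage: the corollary is a one-line algebraic consequence of the two counting theorems, and the only point requiring (minimal) care is the legitimacy of cancelling $c(\gamma_1)\,c(\gamma_2)$, which is immediate because Mirzakhani's constants are strictly positive and $\mathrm{Mod}_{g,n}$-orbits of simple closed curves exist whenever $2 - 2g - n < 0$. As a sanity check one can note that the identity is consistent with $m_{g,n} > 0$ and $b_{g,n} > 0$, and it can be verified against the known values of $m_{g,n}$ and $b_{g,n}$ in low-complexity cases such as $(g,n) \in \{(1,1),(0,4),(2,0)\}$.
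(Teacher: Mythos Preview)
Your argument is correct and matches the paper's own approach: the corollary is stated immediately after Theorems \ref{counting_quadratic_differentials_intro} and \ref{counting_quadratic_differentials_v2_intro} with the sentence ``Combining Theorems \ref{counting_quadratic_differentials_intro} and \ref{counting_quadratic_differentials_v2_intro} gives the following formula,'' and your write-up is precisely that combination, with the harmless extra care of exhibiting a specific $\gamma_1 = \gamma_2$ so that the cancellation of $c(\gamma_1)c(\gamma_2)$ is justified.
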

$ $\vspace{+0.2cm}

\begin{remark}
	\label{zorich}
	Theorems \ref{counting_quadratic_differentials_intro}, \ref{counting_square_tiled_surfaces_2}, and \ref{counting_quadratic_differentials_v2_intro}, and Corollary \ref{masur_veech_volume} reflect recent discoveries of Delecroix, Goujard, Zograf, and Zorich; see \cite{DGZZ19}. Their work uses a completely different approach based on combinatorial methods which involve counting square-tiled surfaces by using Kontsevich's combinatorial formula for ribbon graphs, see \cite{Kon92}, and a posteriori relating the asymptotics of such count to Mirzakhani's asymptotics for simple closed hyperbolic geodesics by considering Mirzakhani's description of such asymptotics in terms of intersection numbers of tautological line bundles over moduli spaces of Riemann surfaces. An alternative version of Corollary \ref{masur_veech_volume} for the case $n = 0$ was previously proved by Mirzakhani in \cite{Mir08a}.\\
\end{remark}

\textit{Main tools.} Arguably the most important tool used in this paper is the parametrization of non-zero, integrable, holomorphic quadratic differentials on $S_{g,n}$ by their real and imaginary foliations. More precisely, let $Q\mathcal{T}_{g,n}$ be the Teichm\"uller space of marked non-zero, integrable, holomorphic quadratic differentials on $S_{g,n}$ and let $\mathcal{ML}_{g,n}$ be the space of measured geodesic lamination on $S_{g,n}$. Consider the subset $\Delta \subseteq \mathcal{ML}_{g,n} \times \mathcal{ML}_{g,n}$ of pairs of measured geodesic laminations that do not fill $S_{g,n}$. The desired parametrization is given by the map
\begin{equation}
\label{hubbard_masur_definition}
\begin{array}{c c c c}
h: & Q\mathcal{T}_{g,n} &\to & \mathcal{ML}_{g,n} \times \mathcal{ML}_{g,n} - \Delta \\
\end{array}
\end{equation}
which assigns to every marked non-zero, integrable, holomorphic quadratic differential on $S_{g,n}$ its real and imaginary foliations interpreted as measured geodesic laminations on $S_{g,n}$. Following the conventions of Lindenstrauss and Mirzakhani in \cite{LM08}, we refer to this map as the \textit{Hubbard-Masur map}.\\

\begin{theorem}
	\label{hubbard_masur_map}
	The Hubbard-Masur map $h \colon Q\mathcal{T}_{g,n} \to \mathcal{ML}_{g,n} \times \mathcal{ML}_{g,n} - \Delta$ is a mapping class group equivariant homeomorphism sending marked square-tiled surfaces to pairs of filling integral multi-curves and sending area of quadratic differentials to geometric intersection number of measured geodesic laminations.\\
\end{theorem}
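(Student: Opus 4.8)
The plan is to establish each of the four claimed properties of the Hubbard-Masur map $h$ separately, drawing on classical work. \emph{Well-definedness and the Hubbard-Masur / Gardiner theorem.} First I would recall the theorem of Hubbard and Masur (extended to the punctured case by Gardiner, and stated as Theorem \ref{hubbard_masur_slice} in the excerpt) asserting that for a fixed marked Riemann surface structure $X$ and a fixed measured foliation $\mathcal{F}$ there is a unique holomorphic quadratic differential $q$ on $X$ whose horizontal foliation is measure-equivalent to $\mathcal{F}$; dually this pins down $q$ by its horizontal and vertical foliations jointly, so the map $(X,q) \mapsto (\mathbf{Re}(q), \mathbf{Im}(q))$ is a bijection onto its image once we identify measured foliations with measured geodesic laminations via the standard Thurston correspondence. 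The image lands in the complement of $\Delta$ because the horizontal and vertical foliations of a quadratic differential always jointly fill: their singular leaves partition the surface into polygons, so no simple closed curve can have zero intersection with both. Conversely, given a filling pair $(\lambda_1, \lambda_2)$, Gardiner-Masur theory produces a unique $(X,q)$ realizing them, giving surjectivity onto $\mathcal{ML}_{g,n} \times \mathcal{ML}_{g,n} - \Delta$.

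\emph{Continuity and homeomorphism.} Next I would argue $h$ is a homeomorphism. Both sides are manifolds of the same dimension $12g-12+4n$ (for $Q\mathcal{T}_{g,n}$ this is the total space of the bundle of quadratic differentials over Teichm\"uller space minus the zero section; for $\mathcal{ML}_{g,n} \times \mathcal{ML}_{g,n}$ minus the closed nowhere-dense set $\Delta$ it is clear). Continuity of $h$ follows from continuity of the horizontal/vertical foliation maps on quadratic differentials together with continuity of the Thurston identification of measured foliations with laminations; continuity of $h^{-1}$ follows from continuity of the Gardiner-Masur solution in its parameters, or alternatively one invokes invariance of domain once injectivity and continuity are in hand, noting that a continuous bijection between manifolds of equal finite dimension that is open is a homeomorphism. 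I would likely take the invariance-of-domain route to avoid re-deriving the continuous dependence of the Hubbard-Masur differential, which is the one genuinely technical input.

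\emph{Equivariance.} Mapping class group equivariance is formal: $\text{Mod}_{g,n}$ acts on $Q\mathcal{T}_{g,n}$ by change of marking and on $\mathcal{ML}_{g,n} \times \mathcal{ML}_{g,n}$ diagonally by change of marking, and since the horizontal and vertical foliations are defined intrinsically from $(X,q)$, a marking change applied to $(X,q)$ transports its foliations by the same marking change; the set $\Delta$ is preserved because filling is a topological (marking-equivariant) condition.

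\emph{Square-tiled surfaces and area.} For the last two clauses: a marked quadratic differential is square-tiled precisely when it is a pullback of $(dz)^2$ on the square torus by an integral branched cover, equivalently when it is both horizontally and vertically periodic with commensurable cylinder data normalizable to integers; in that case the core curves of the horizontal (resp. vertical) cylinders, weighted by cylinder widths, form an integral multi-curve whose associated measured lamination is exactly $\mathbf{Re}(q)$ (resp. $\mathbf{Im}(q)$), and these two multi-curves fill because the cylinder decompositions in transverse directions do. The converse — that a filling pair of integral multi-curves arises from a square-tiled surface — is again Gardiner-Masur applied to integral data, with integrality of the foliations forcing the resulting differential to have rational (hence, after scaling, integral) cylinder widths and heights, i.e. to be square-tiled. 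Finally, that $h$ sends area to geometric intersection number is the identity $\text{Area}(q) = \int |dx\,dy| = i(\mathbf{Re}(q), \mathbf{Im}(q))$, which holds because the transverse measures of the horizontal and vertical foliations are $|dy|$ and $|dx|$ and their product integrates the area form; for integral multi-curves this specializes to the combinatorial statement that the number of unit squares equals the total geometric intersection number of the horizontal and vertical core multi-curves. \emph{Main obstacle.} The only point requiring real care rather than bookkeeping is the bicontinuity of $h$ near the ``boundary'' $\Delta$ and the precise matching of topologies under the Thurston foliation-to-lamination dictionary; I expect to handle this by reducing continuity of $h^{-1}$ to the known continuous dependence of the Hubbard-Masur differential on its foliation data (Gardiner) and then invoking invariance of domain, rather than constructing an inverse by hand.
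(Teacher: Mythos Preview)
The paper does not supply a proof of this theorem: it is stated in the introduction as one of the ``main tools'' and recalled again in the background section, but in both places it is treated as a known result assembled from the classical literature (Hubbard--Masur, Gardiner--Masur, together with the Thurston correspondence between measured foliations and measured laminations). So there is no paper proof to compare against; your proposal is in effect a reconstruction of why the statement is true.

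As such a reconstruction, your outline is essentially correct and hits the right ingredients: the Hubbard--Masur/Gardiner uniqueness theorem for the bijection, invariance of domain for bicontinuity, naturality for equivariance, and the cylinder decomposition for the square-tiled and area claims. Two small points are worth tightening. First, in the square-tiled clause you write that the horizontal core curves are ``weighted by cylinder widths''; with the paper's conventions (and the standard one) they are weighted by cylinder \emph{heights}, since the transverse measure of the horizontal foliation records vertical extent. Second, your converse direction (``integrality of the foliations forcing the resulting differential to have rational (hence, after scaling, integral) cylinder widths and heights'') overcomplicates matters: if both multi-curves are integral then the horizontal cylinder heights are the integer weights of $\gamma_1$ and the horizontal cylinder circumferences are the intersection numbers $i(\alpha_i,\gamma_2)$, which are already integers --- no scaling is needed. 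Neither point is a genuine gap.
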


Fix a measured geodesic lamination $\lambda \in \mathcal{ML}_{g,n}$. Let $\mathcal{ML}_{g,n}(\lambda) \subseteq \mathcal{ML}_{g,n}$ be the open subset of all measured geodesic laminations that together with $\lambda$ fill $S_{g,n}$, that is 
\[
\mathcal{ML}_{g,n}(\lambda) := \{ \mu \in \mathcal{ML}_{g,n} \ | \ (\lambda, \mu) \in \mathcal{ML}_{g,n} \times \mathcal{ML}_{g,n} - \Delta\}.
\]
Let $\mathcal{T}_{g,n}$ be the Teichm\"uller space of marked punctured complex structures on $S_{g,n}$ and $p \colon Q\mathcal{T}_{g,n} \to \mathcal{T}_{g,n}$ be the natural projection of the bundle of non-zero, integrable, holomorphic quadratic differentials on $\mathcal{T}_{g,n}$. The inverse of the Hubbard-Masur map induces a \textit{Hubbard-Masur slice} $h_\lambda \colon \mathcal{ML}_{g,n}(\lambda)  \to \mathcal{T}_{g,n} $ given by the following composition:
\[
\begin{array}{ c c c c c c c c}
h_\lambda \colon & \mathcal{ML}_{g,n}(\lambda) & \to &\mathcal{ML}_{g,n} \times \mathcal{ML}_{g,n} - \Delta  & \to & Q\mathcal{T}_{g,n} & \to & \mathcal{T}_{g,n} \\
&\mu & \mapsto & (\lambda,\mu) & \mapsto & h^{-1}(\lambda,\mu) & \mapsto  & p(h^{-1}(\lambda,\mu)).
\end{array}
\]
$ $ \vspace{+0.2cm}

The following deep theorem, originally due to Hubbard and Masur in the case $n = 0$, see \cite{HM79}, and extended by Gardiner and Masur to the case $n > 0$, see \cite{Gar87} and \cite{GM91}, is absolutely crucial to develop the material presented in this paper; see \cite{Ker80} for an alternative proof by Kerckhoff using Jenkins-Strebel differentials and \cite{Wol96} for an elementary proof by Wolf using harmonic maps.\\

\begin{theorem}
	\label{hubbard_masur_slice}
	For every $\lambda \in \mathcal{ML}_{g,n}$, the Hubbard-Masur slice $h_\lambda \colon \mathcal{ML}_{g,n}(\lambda) \to \mathcal{T}_{g,n}$ is a $\text{Stab}(\lambda)$-equivariant homeomorphism.\\
\end{theorem}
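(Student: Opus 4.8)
**The plan is to derive Theorem \ref{hubbard_masur_slice} from Theorem \ref{hubbard_masur_map} together with the Hubbard–Masur–Gardiner existence and uniqueness theorem for Jenkins–Strebel-type quadratic differentials with a prescribed vertical (or horizontal) foliation.** The key structural input is that Theorem \ref{hubbard_masur_map} already identifies $Q\mathcal{T}_{g,n}$ with $\mathcal{ML}_{g,n} \times \mathcal{ML}_{g,n} - \Delta$ as a $\text{Mod}_{g,n}$-equivariant homeomorphism via $h$. Restricting the second coordinate of the inverse $h^{-1}$ to the fiber $\{\lambda\} \times \mathcal{ML}_{g,n}(\lambda)$ gives a homeomorphism onto its image in $Q\mathcal{T}_{g,n}$, namely the set of marked quadratic differentials whose real foliation is exactly $\lambda$. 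Composing with the bundle projection $p \colon Q\mathcal{T}_{g,n} \to \mathcal{T}_{g,n}$ yields $h_\lambda$, so $h_\lambda$ is automatically continuous. What must be shown is that $h_\lambda$ is a bijection onto $\mathcal{T}_{g,n}$ with continuous inverse, and that it intertwines the $\text{Stab}(\lambda)$-actions.

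First I would establish equivariance. Since $h$ is $\text{Mod}_{g,n}$-equivariant and $p$ is $\text{Mod}_{g,n}$-equivariant, for $\phi \in \text{Stab}(\lambda)$ and $\mu \in \mathcal{ML}_{g,n}(\lambda)$ we have $h^{-1}(\lambda, \phi \cdot \mu) = h^{-1}(\phi \cdot \lambda, \phi \cdot \mu) = \phi \cdot h^{-1}(\lambda, \mu)$, where the first equality uses $\phi \cdot \lambda = \lambda$; applying $p$ gives $h_\lambda(\phi \cdot \mu) = \phi \cdot h_\lambda(\mu)$. Note that $\text{Stab}(\lambda)$ indeed acts on $\mathcal{ML}_{g,n}(\lambda)$, since filling is a topological condition preserved by homeomorphisms and $\phi$ fixes $\lambda$. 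Next I would prove surjectivity: given any marked complex structure $X \in \mathcal{T}_{g,n}$, the Hubbard–Masur–Gardiner theorem (Theorem \ref{hubbard_masur_slice}'s classical ancestor, which in its standard form asserts that for a fixed Riemann surface and a fixed measured foliation there is a unique holomorphic quadratic differential realizing it as horizontal foliation) produces a unique $q$ on $X$ with $\textbf{Re}([(X,q)]) = \lambda$. This differential is non-zero and integrable, and by Theorem \ref{hubbard_masur_map} its imaginary foliation $\mu := \textbf{Im}([(X,q)])$ fills $S_{g,n}$ together with $\lambda$, i.e.\ $\mu \in \mathcal{ML}_{g,n}(\lambda)$; then $h_\lambda(\mu) = X$. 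For injectivity: if $h_\lambda(\mu_1) = h_\lambda(\mu_2) = X$, then $h^{-1}(\lambda, \mu_1)$ and $h^{-1}(\lambda, \mu_2)$ are two marked quadratic differentials on the same marked complex structure $X$ with the same real foliation $\lambda$; uniqueness in the Hubbard–Masur–Gardiner theorem forces them to coincide, and injectivity of $h$ then gives $\mu_1 = \mu_2$.

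Finally, continuity of the inverse: $h_\lambda^{-1}$ sends $X$ to $\textbf{Im}(h^{-1}(\lambda, \cdot)^{-1})$ — more precisely, given $X$ one forms the unique $q$ with real foliation $\lambda$ and reads off its imaginary foliation. Continuity here follows from continuity of the Hubbard–Masur section $X \mapsto q$ (this is part of the classical theorem, and is exactly how one sees that $h$ itself is a homeomorphism rather than merely a bijection) composed with the continuity of the imaginary-foliation map $\textbf{Im} \colon Q\mathcal{T}_{g,n} \to \mathcal{ML}_{g,n}$, which is built into Theorem \ref{hubbard_masur_map}. Alternatively, once $h_\lambda$ is shown to be a continuous bijection, one can invoke invariance of domain, since $\mathcal{ML}_{g,n}(\lambda)$ and $\mathcal{T}_{g,n}$ are both manifolds of the same dimension $6g - 6 + 2n$; this sidesteps any delicate estimate. \textbf{The main obstacle is not any single estimate but the careful bookkeeping of markings and the $\text{Stab}(\lambda)$-action}: one must ensure that "the" quadratic differential with real foliation $\lambda$ is well-defined at the level of the Teichm\"uller space (marked objects) rather than moduli space, and that the uniqueness clause in Hubbard–Masur–Gardiner is applied to marked structures so that injectivity of $h_\lambda$ really does descend from injectivity of $h$. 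Once the marked version of the classical theorem is in hand, everything else is formal.
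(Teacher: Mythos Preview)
The paper does not prove this theorem; it is stated as a known deep result and attributed to Hubbard--Masur (for $n=0$), Gardiner--Masur (for $n>0$), with alternative proofs cited to Kerckhoff and Wolf. So there is no ``paper's own proof'' to compare against.

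Your derivation is correct as a repackaging exercise: you take as a black box the classical Hubbard--Masur--Gardiner statement (for each marked $X$ and each $\lambda$ there is a unique integrable $q$ on $X$ with $\text{Re}(q)=\lambda$), combine it with Theorem~\ref{hubbard_masur_map}, and carefully check equivariance and continuity. That is exactly what one has to do to see that the formulation as a $\text{Stab}(\lambda)$-equivariant homeomorphism $h_\lambda$ follows from the classical fiberwise statement, and your bookkeeping of markings and the equivariance argument are fine. The invariance-of-domain shortcut for continuity of the inverse is also legitimate. Just be aware that you are not proving the theorem so much as reducing it to its standard form: the classical existence-and-uniqueness statement you invoke \emph{is} the substance of Theorem~\ref{hubbard_masur_slice}, and Theorem~\ref{hubbard_masur_map} itself is typically proved using that same classical theorem. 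So what you have written is a correct formal deduction of one formulation from another, which is all the paper needs here, but it should not be read as an independent proof of the underlying result.
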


Mirzakhani's curve counting results in \cite{Mir08b} are also an important tool in this paper as they provide suitable vocabulary and play an important role in the proof of Theorem \ref{counting_quadratic_differentials_intro}. Let $\mu_{\text{Thu}}$ be the Thurston measure on $\mathcal{ML}_{g,n}$. Given a rational multi-curve $\gamma$ on $S_{g,n}$ and $L > 0$, consider on $\mathcal{ML}_{g,n}$ the counting measure
\begin{equation}
\label{curve_counting_measures}
\mu_{\gamma}^L := \frac{1}{L^{6g-6+2n}} \sum_{\alpha \in \text{Mod}_{g,n} \cdot \gamma} \delta_{\frac{1}{L} \cdot \alpha}.
\end{equation}
The proof of Theorem \ref{counting_quadratic_differentials_intro} relies on the following measure convergence result, which is Theorem 1.3 in \cite{Mir08b}.\\

\begin{theorem}
	\label{mirzakhani_measure_convergence_intro}
	For any rational multi-curve $\gamma$ on $S_{g,n}$,
	\[
	\lim_{L \to \infty} \mu_{\gamma}^L = \frac{c(\gamma)}{b_{g,n}} \cdot \mu_{\text{Thu}},
	\]
	where $c(\gamma) \in \mathbf{Q}_{>0}$ is the frequency of integral multi-curves on $S_{g,n}$ of topological type $[\gamma]$ defined in (5.2) in \cite{Mir08b} and $b_{g,n} \in \mathbf{Q}_{>0} \cdot \pi^{6g-6+2n}$ is the constant depending only on $g$ and $n$ defined in (3.4) in \cite{Mir08b}.\\
\end{theorem}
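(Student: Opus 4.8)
The plan is to follow Mirzakhani's original argument from \cite{Mir08b}. I would start by recording two structural features of the measures $\mu_\gamma^L$: they are \emph{exactly} $\text{Mod}_{g,n}$-invariant, since $\text{Mod}_{g,n} \cdot \gamma$ is a single orbit and the $\text{Mod}_{g,n}$-action on $\mathcal{ML}_{g,n}$ commutes with scaling; and dilation by $t > 0$ carries $\mu_\gamma^L$ to $t^{-(6g-6+2n)} \mu_\gamma^{L/t}$. Next I would test $\mu_\gamma^L$ against the indicator of a unit-length ball $B_X := \{\lambda \in \mathcal{ML}_{g,n} : \ell_X(\lambda) \leq 1\}$ for $X \in \mathcal{T}_{g,n}$; since $\ell_X(\cdot)$ is continuous, proper, and homogeneous of degree one in the lamination, $B_X$ is compact and
\[
\mu_\gamma^L(B_X) \;=\; \frac{1}{L^{6g-6+2n}} \, \# \{\alpha \in \text{Mod}_{g,n} \cdot \gamma : \ell_X(\alpha) \leq L\} \;=:\; \frac{s_X(L,\gamma)}{L^{6g-6+2n}},
\]
the normalized number of simple multi-geodesics of topological type $[\gamma]$ and length $\leq L$ on $X$. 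A standard approximation argument, using continuity of $X \mapsto \ell_X(\cdot)$ and of $X \mapsto \mu_{\text{Thu}}(B_X)$, shows that weak-$*$ convergence of the family $\{\mu_\gamma^L\}$ is detected on the sets $B_X$ and their dilations, so matters reduce to the asymptotics of $s_X(L,\gamma)$.

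A crude polynomial upper bound on the number of simple closed geodesics of bounded length on a fixed hyperbolic surface shows that $\{\mu_\gamma^L\}_{L \geq 1}$ has locally uniformly bounded mass, hence is precompact in the weak-$*$ topology. By the two structural features above, any subsequential weak-$*$ limit $\nu$ is $\text{Mod}_{g,n}$-invariant and scales homogeneously, $t_* \nu = t^{-(6g-6+2n)} \nu$; and a direct count of the points of $\text{Mod}_{g,n} \cdot \gamma$ in train-track coordinate boxes (uniform in $L$) shows $\nu$ is absolutely continuous with respect to $\mu_{\text{Thu}}$. The ergodicity of the Thurston measure under the $\text{Mod}_{g,n}$-action then forces $\nu = c \cdot \mu_{\text{Thu}}$ for some constant $c = c(\nu) \geq 0$; equivalently, $s_X(L_k,\gamma)/L_k^{6g-6+2n} \to c \cdot \mu_{\text{Thu}}(B_X)$ along the relevant subsequence.

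To pin down $c$ (and see it is independent of the subsequence) I would integrate the identity $\mu_\gamma^L(B_X) = s_X(L,\gamma)/L^{6g-6+2n}$ over the moduli space $\mathcal{M}_{g,n}$ against the Weil--Petersson measure. By Mirzakhani's integration (unfolding) formula, $\int_{\mathcal{M}_{g,n}} s_X(L,\gamma) \, dX_{\mathrm{WP}}$ equals an integral, over the region of the positive orthant where the $\gamma$-weighted total length of the cut curves is at most $L$, of a product of Mirzakhani's Weil--Petersson volume polynomials for the pieces of $S_{g,n}$ cut along the components of $\gamma$; this is a polynomial in $L$ of degree $6g-6+2n$ whose leading coefficient is exactly $c(\gamma)$ in the normalization of (5.2) in \cite{Mir08b}. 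On the other hand, if $\mu_\gamma^{L_k} \to c \cdot \mu_{\text{Thu}}$ and this limit may be passed through the integral over $\mathcal{M}_{g,n}$, then $\int_{\mathcal{M}_{g,n}} \mu_\gamma^{L_k}(B_X) \, dX_{\mathrm{WP}} \to c \int_{\mathcal{M}_{g,n}} \mu_{\text{Thu}}(B_X) \, dX_{\mathrm{WP}} = c \cdot b_{g,n}$, the last equality being the definition (3.4) of $b_{g,n}$ in \cite{Mir08b}. Comparing, $c \cdot b_{g,n} = c(\gamma)$, so $c = c(\gamma)/b_{g,n} > 0$ for every subsequential limit, whence $\mu_\gamma^L$ converges, to $\frac{c(\gamma)}{b_{g,n}} \mu_{\text{Thu}}$.

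The one genuinely hard point is justifying the interchange of $\lim_{k \to \infty}$ with $\int_{\mathcal{M}_{g,n}}$ in the last step, i.e.\ ruling out escape of mass in the non-compact moduli space $\mathcal{M}_{g,n}$ (which simultaneously shows $c > 0$). For this I would establish a bound on $L^{-(6g-6+2n)} s_X(L,\gamma)$ that is uniform in $L$ and $dX_{\mathrm{WP}}$-integrable over $\mathcal{M}_{g,n}$: one controls the number of simple closed geodesics of length $\leq L$ on $X$ in terms of $L$, of $g$ and $n$, and of negative powers of the systole of $X$ (the latter governing the at most $3g-3+n$ short curves that cut off the thin part), and then invokes the $dX_{\mathrm{WP}}$-integrability over $\mathcal{M}_{g,n}$ of such negative powers of the systole. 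This dominated-convergence estimate, together with the careful justification that the sets $B_X$ form a convergence-determining family, carries essentially all of the technical weight; the remaining steps are soft.
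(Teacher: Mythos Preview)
The paper does not prove this theorem at all: it is quoted as Theorem~1.3 of \cite{Mir08b} and used as a black box input to the proof of Theorem~\ref{counting_quadratic_differentials_intro}. So there is no ``paper's own proof'' to compare against.

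That said, your sketch is an accurate outline of Mirzakhani's original argument in \cite{Mir08b}: precompactness of $\{\mu_\gamma^L\}$, identification of any subsequential limit as a constant multiple of $\mu_{\text{Thu}}$ via ergodicity of the $\text{Mod}_{g,n}$-action, and pinning down the constant by integrating $s_X(L,\gamma)$ over $\mathcal{M}_{g,n}$ against Weil--Petersson and invoking her unfolding formula. You have also correctly located the main technical burden (the integrable domination needed to pass the limit through $\int_{\mathcal{M}_{g,n}}$). One small caution: the step ``a direct count \dots\ shows $\nu$ is absolutely continuous with respect to $\mu_{\text{Thu}}$'' is doing real work and is not quite as soft as you suggest; in Mirzakhani's paper this is handled carefully, and you should not expect it to fall out of a one-line train-track box count without some uniformity argument.
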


\textit{Sketch of proof of Theorem \ref{counting_quadratic_differentials_intro}}. Let $\gamma_1$ and $\gamma_2$ be integral multi-curves on $S_{g,n}$. 
Using Theorem \ref{hubbard_masur_map} we show that
\[
s(\gamma_1,\gamma_2,L) = \sum_{\substack{[\beta]\in \mathcal{ML}_{g,n}(\gamma_1)/\text{Stab}(\gamma_1), \\ \beta \in \text{Mod}_{g,n} \cdot \gamma_2, \\ i(\gamma_1,\beta) \leq L }} \frac{1}{\# (\text{Stab}(\gamma_1) \cap \text{Stab}(\beta))},
\]
where $i(\cdot,\cdot)$ denotes the geometric intersection number of integral multi-curves and where the stabilizers are taken with respect to the action of the mapping class group on the set of all integral multi-curves on $S_{g,n}$. Consider the counting measures $\mu_{\gamma_2}^L$ on $\mathcal{ML}_{g,n}$ defined in (\ref{curve_counting_measures}).  Let $\widehat{\mu}_{\gamma_2}^L$ be the local pushforward of the measure $\mu_{\gamma_2}^L|_{\mathcal{ML}_{g,n}(\gamma_1)}$ under the quotient map $\mathcal{ML}_{g,n}(\gamma_1) \to \mathcal{ML}_{g,n}(\gamma_1)/\text{Stab}(\gamma_1)$. These local pushforwards exist because the action of $\text{Stab}(\gamma_1)$ on $\mathcal{ML}_{g,n}(\gamma_1)$ is properly discontinuous. This follows from Theorem \ref{hubbard_masur_slice} and the proper discontinuity of the mapping class group action on $\mathcal{T}_{g,n}$. Consider the subsets 
\begin{align*}
B(\gamma_1) &:= \{\lambda \in \mathcal{ML}_{g,n}(\gamma_1) \ | \ i(\gamma_1,\lambda) \leq 1\} \subseteq \mathcal{ML}_{g,n}, \\
\widehat{B}(\gamma_1) &:= B(\gamma_1)/\text{Stab}(\gamma_1) \subseteq \mathcal{ML}_{g,n}(\gamma_1)/\text{Stab}(\gamma_1).
\end{align*}
Unraveling definitions we show that
\[
\frac{s(\gamma_1,\gamma_2,L)}{L^{6g-6+2n}} = \widehat {\mu}_{\gamma_2}^L(\widehat{B}(\gamma_1)).
\]
Theorem \ref{mirzakhani_measure_convergence_intro} implies $\widehat{\mu}_{\gamma_2}^L \to \frac{c(\gamma_2)}{b_{g,n}} \cdot \widehat{\mu}_{\text{Thu}}$ as $L \to \infty$, where $ \widehat{\mu}_{\text{Thu}}$ is the local pushforward of the measure $\mu_{\text{Thu}}|_{\mathcal{ML}_{g,n}(\gamma_1)}$ under the quotient map $\mathcal{ML}_{g,n}(\gamma_1) \to \mathcal{ML}_{g,n}(\gamma_1)/\text{Stab}(\gamma_1)$. A no escape of mass argument using Theorem \ref{hubbard_masur_map} and period coordinates shows that
\[
\widehat{\mu}_{\gamma_2}^L (\widehat{B}(\gamma_1))\to \frac{c(\gamma_2)}{b_{g,n}} \cdot \widehat{\mu}_{\text{Thu}}(\widehat{B}(\gamma_1))
\]
as $L \to \infty$. We deduce that
\begin{equation}
\label{almost_finished_proof_intro}
\frac{s(\gamma_1,\gamma_2,L)}{L^{6g-6+2n}} \to \frac{c(\gamma_2)}{b_{g,n}} \cdot \widehat{\mu}_{\text{Thu}}(\widehat{B}(\gamma_1))
\end{equation}
as $L \to \infty$. Clearly $s(\gamma_1,\gamma_2,L) = s(\gamma_2,\gamma_1,L)$. As a consequence,
\[
\frac{s(\gamma_1,\gamma_2,L)}{L^{6g-6+2n}} \to \frac{c(\gamma_1)}{b_{g,n}} \cdot \widehat{\mu}_{\text{Thu}}(\widehat{B}(\gamma_2))
\] 
as $L \to \infty$. We deduce 
\[
\frac{\widehat{\mu}_{\text{Thu}}(\widehat{B}(\gamma_1))}{c(\gamma_1)} = \frac{\widehat{\mu}_{\text{Thu}}(\widehat{B}(\gamma_2))}{c(\gamma_2)}.
\]
As this holds for all integral multi-curves $\gamma_1$ and $\gamma_2$ on $S_{g,n}$, it follows that $r_{g,n} := \frac{\widehat{\mu}_{\text{Thu}}(\widehat{B}(\gamma))}{c(\gamma)}$ is a constant depending only on $g$ and $n$ and not on the integral multi-curve $\gamma$. Explicit computations when $\gamma$ is a pair of pants decomposition of $S_{g,n}$ show that $r_{g,n} = \frac{1}{2^{2g-3+n}}$. Theorem \ref{counting_quadratic_differentials_intro} then follows from (\ref{almost_finished_proof_intro}).\\

\begin{remark}
	The explicit computation of  $\widehat{\mu}_{\text{Thu}}(\widehat{B}(\gamma))$ when $\gamma$ is a pair of pants decomposition of $S_{g,n}$ introduces a novel approach for computing Thurston volumes by reducing to a lattice counting problem in Euclidean space through the Dehn-Thurston parametrization of integral multi-curves on $S_{g,n}$.\\
\end{remark}

\begin{remark}
	The importance of Theorems \ref{counting_quadratic_differentials_intro}, \ref{counting_quadratic_differentials_2_intro}, and \ref{counting_quadratic_differentials_v2_intro} being stated in terms of multi-curve frequencies rather than Thurston volumes is that explicit formulas for computing such frequencies as a sum of Weil-Petersson volumes were provided by Mirzakhani in \cite{Mir08b}.\\
\end{remark}

\begin{remark}
	If we define the constants $b_{g,n}$ following Mirzakhani's conventions in \cite{Mir08a} rather than in \cite{Mir08b}, the powers of two in Theorem \ref{counting_quadratic_differentials_intro} and Corollary $\ref{masur_veech_volume}$ disappear.\\
\end{remark}

\begin{remark}
	There are several different conventions of what should be called a square-tiled surface but they are all essentially equivalent; we consider the one best adapted to our arguments.  See \cite{AEZ16} for other definitions.\\
\end{remark}

\textit{Survey of similar counting problems}. The study of similar counting problems in hyperbolic geometry can be traced back to Delsarte, Huber, and  Selberg, who showed that on any complete, finite volume, hyperbolic surface, the number of primitive closed geodesics of length $\leq L$ grows asymptotically like $e^L/L$ as $L \to \infty$; in particular, the asymptotic growth is independent of the topology of the surface. This result is commonly known as the \textit{prime geodesic theorem}. In his thesis, see \cite{Mar04} for an English translation, Margulis proved an analogous result for arbitrary compact Riemannian manifolds of strictly negative sectional curvaturature; in this case the asymptotic growth is of the form $e^{hL}/hL$, where $h$ is the volume entropy of the manifold. In \cite{Mir08b}, Mirzakhani proved that on any complete, finite volume, hyperbolic surface, the asymptotic growth of the number of simple closed geodesics of length $\leq L$ is polynomial, in constrast to exponential, of degree which depends only on the topology of the surface. Moreover, Mirzakhani studied the asymptotics of each mapping class group orbit separately, giving precise formulas in terms of Weil-Petersson volumes for the leading coefficient of the polynomial describing the asymptotic growth of the number of simple closed curves of hyperbolic length $\leq L$ in a given mapping class group orbit. In \cite{Riv12}, Rivin extended Mirzakhani's results to closed curves with one self-intersection. In \cite{Mir16}, Mirzakhani showed that the same asymptotic growth holds for the mapping class group orbit of any closed curve, without restrictions on the number of intersections. Erlandsson, Parlier, and Souto, see \cite{ES16}, \cite{Erl16}, and \cite{EPS16}, extended Mirzakhani's results to general length functions of closed curves, not only hyperbolic length, that extend to the space of geodesic currents; see \cite{EU18} for a unified discussion. More recently, Rafi and Souto, see \cite{RS17}, proved analogous counting results for mapping class group orbits of arbitrary filling geodesic currents. As part of the same work, a related lattice counting result for mapping class group orbits of points in the Teichm\"uller space of marked hyperbolic structures on a closed, connected, oriented surface is proved. The counting is considered with respect to the Thurston metric. An analogous lattice counting problem for the Teichm\"uller metric instead of the Thuston metric was previously studied by Athreya, Bufetov, Eskin, and Mirzakhani in \cite{ABEM12}.\\ 

Similar counting problems in complex analysis arised originally from computations of Masur-Veech volumes of strata of Abelian differentials. Such volumes were first introduced and proved to be finite by Masur, in \cite{Mas82}, and Veech, in \cite{Vee82}. Their interest in such quantities originated from the study of interval exchange transformations. In \cite{EO01}, Eskin and  Okounkov, following ideas of Kontsevich, Masur, and Zorich, provided formulas for computing Masur-Veech volumes of strata of Abelian differentials by considering the relation of such quantities with the asymptotic growth of the number of brached covers of a torus with fixed ramification type as the degree of the cover tends to infinity. In \cite{AEZ16}, Athreya, Eskin, and Zorich computed Masur-Veech volumes of strata of quadradratic differentials on genus zero surfaces by alternative methods, thus providing an explicit expression for the leading term of the function counting associated pillowcase covers when the degree of the cover tends to infinity. More recently, Delecroix, Goujard, Zograf, and Zorich, see \cite{DGZZ16}, computed the absolute contribution of square-tiled surfaces having a single horizontal cylinder to the Masur-Veech volume of any ambient strata of Abelian differentials. In \cite{DGZZ17}, the same authors used the results in \cite{AEZ16} and \cite{DGZZ16} to derive applications to asymptotic enumeration of meanders. As pointed out in Remark \ref{zorich}, the forthcoming work \cite{DGZZ19} of the same authors proves many of the results in this paper by different methods.\\

\textit{Organization of the paper.} In Section 2 we present the background material and notation necessary to understand the proofs of Theorems \ref{counting_quadratic_differentials_intro} and \ref{counting_quadratic_differentials_2_intro}. In Section 3 we present the proofs of Theorems \ref{counting_quadratic_differentials_intro} and \ref{counting_quadratic_differentials_2_intro} in full detail. In Section 4 we compute the value of the constants $r_{g,n}$ that appear in the proof of Theorem \ref{counting_quadratic_differentials_intro} by considering the case of a pair of pants decomposition. In Section 5 we present explicit examples. \\

\textit{Acknowledgments.} The author would like to thank Alex Wright for suggesting the problem discussed in this paper and for his constant support along the development of this project. The author would also like to thank Steven Kerckhoff for his invaluable advice, patience, and encouragement.\\

\section{Background material}

\textit{Notation.} Let $g,n \geq 0$ be integers such that $2 - 2g - n < 0$. For the rest of this paper, $S_{g,n}$ will denote a connected, oriented, smooth surface of genus $g$ with $n$ punctures (and negative Euler characteristic). For $g \geq 2$ we will also use the notation $S_{g} := S_{g,0}$.\\

\textit{Teichm\"uller and moduli spaces of Riemann surfaces.} The Teichm\"uller space of $S_{g,n}$, denoted $\mathcal{T}_{g,n}$, is the space of all marked punctured complex structures on $S_{g,n}$ up to isotopy. More precisely, $\mathcal{T}_{g,n}$ is the space of pairs $(X,\phi)$, where $X$ is a punctured Riemann surface and $\phi \colon S_{g,n} \to X$ is an orientation-preserving diffeomorphism, modulo the equivalence relation $(X,\phi_1) \sim (X,\phi_2)$ if and only if there exists a conformal diffeomorphism $I \colon X_1 \to X_2$ isotopic to $\phi_2 \circ \phi_1^{-1}$. \\

Let $S \subseteq S_g$ be a subset of $n$ points in $S_g$. By the removable singularity theorem, we can think of points in $\mathcal{T}_{g,n}$ as triples $(X,\Sigma,\phi)$, where $X$ is a Riemann surface, $\Sigma \subseteq X$ is a subset of $n$ points in $X$, and $\phi \colon (S_g,S) \to (X,\Sigma)$ is an orientation preserving differomorphism, modulo the equivalence relation $(X_1,\Sigma_1,\phi_1) \sim (X_2,\Sigma_2,\phi_2)$ if and only if there exists a conformal diffeomorphism $I \colon (X_1, \Sigma_1) \to (X_2,\Sigma_2)$ isotopic to $\phi_2 \circ \phi_1^{-1}$ through diffeomorphisms mapping $\Sigma_1$ to $\Sigma_2$.  \\

By the uniformization theorem, $\mathcal{T}_{g,n}$ also parametrizes marked oriented, complete, finite volume hyperbolic structures on $S_{g,n}$ up to isotopy. More precisely, $\mathcal{T}_{g,n}$ is the space of pairs $(X,\phi)$, where $X$ is an oriented, complete, finite volume hyperbolic surface and $\phi \colon S_{g,n} \to X$ is an orientation-preserving diffeomorphism, modulo the equivalence relation $(X,\phi_1) \sim (X,\phi_2)$ if and only if there exists an orientation-preserving isometry $I \colon X_1 \to X_2$ isotopic to $\phi_2 \circ \phi_1^{-1}$. Given  $[(X,\phi)] \in \mathcal{T}_{g,n}$ and an essential simple closed curve $\gamma$ on $S_{g,n}$, we will denote by $\ell_X(\gamma)$ the hyperbolic length of the unique geodesic representative in the free-homotopy class of $\phi(\gamma)$ (the marking $\phi$ is implicit in the notation). \\

We denote the mapping class group of $S_{g,n}$ by $\text{Mod}_{g,n}$. The mapping class group of $S_{g,n}$ acts on $\mathcal{T}_{g,n}$ by change of marking. The quotient $\mathcal{M}_{g,n} := \mathcal{T}_{g,n} / \text{Mod}_{g,n}$ is the moduli space of punctured complex structures on $S_{g,n}$. \\

\textit{The Weil-Peterson volume form.} From the perspective of complex analysis, the Teichm\"ueller space $\mathcal{T}_{g,n}$ can be endowed with a $3g-3+n$ dimensional complex structure. This complex structure admits a natural K\"ahler Hermitian structure. The associated symplectic form $\omega_{wp}$ is called the Weil-Petersson symplectic form. The Weil-Petersson volume form is the top exterior power $v_{wp} := \frac{1}{(3g-3+n)!}\bigwedge^{3g-3+n} \omega_{wp}$. The Weil-Petersson measure is the measure $\mu_{wp}$ induced by the Weil-Petersson volume form on $\mathcal{T}_{g,n}$. See \cite{Hub16} for more details. In \cite{Wol85}, Wolpert obtained the following expression for $\omega_{wp}$ in terms of Fenchel-Nielsen coordinates $(\ell_i,\tau_i)_{i=1}^{3g-3+n} \in (\mathbf{R}_{>0} \times \mathbf{R})^{3g-3+n}$, commonly known as \textit{Wolpert's magic formula}:
\[
\omega_{wp} = \sum_{i=1}^{3g-3+n} d \ell_i \wedge d\tau_i.
\] 
The Weil-Petersson volume form $v_{wp}$ can then be expressed in terms of Fenchel-Nielsen coordinates as
\[
v_{wp} = \prod_{i=1}^{3g-3+n} d \ell_i \wedge d \tau_i.
\]
\vspace{+0.2cm}

\textit{Quadratic differentials.} Let $X$ be a finite type punctured Riemann surface diffeomorphic to $S_{g,n}$. Let $K$ be the canonical bundle of $X$; the holomorphic sections of $K$ are the holomorphic $1$-forms of $X$. A quadratic differential $q$ on $X$ is a holomorphic section of the symmetric square $K \vee K$. Quadratic differentials will be denoted by $(X,q)$, keeping track of the Riemann surface they are defined on. The area of a quadratic differential is $\text{Area}(X,q) := \int_X |q|$. We say $(X,q)$ is integrable if $\text{Area}(X,q) < \infty$. We denote by $QD(X)$ the complex vector space of all integrable, holomorphic quadratic differentials on $X$.  An automorphism of a quadratic differential $(X,q)$ is a conformal diffeomorphism $I \colon X \to X$ such that $I_* q = q$. We denote by $\text{Aut}(X,q)$ the group of automorphisms of $(X,q)$.\\

Alternatively, integrable, holomorphic quadratic differentials $q$ on a Riemann surface $X$ diffeomorphic to $S_{g,n}$ may be interpreted as meromorphic quadratic differentials $q'$, i.e. meromorphic sections of the symmetric square of the canonical bundle, extending $q$ to $X' \supseteq X$, the closed Riemann surface diffeomorphic to $S_g$ obtained by filling in the punctures $\Sigma' \subseteq X'$ of $X$ using the removable singularity theorem, that are holomorphic outside of $\Sigma'$ and have at most simple poles at points of $\Sigma'$. Assume $q'$ is not identically zero. The zeros of $q$ and the punctures $\Sigma' \subseteq X'$  are the \textit{singularities} of $q'$. Such a quadratic differential will be denoted by $(X',\Sigma',q')$, keeping track of the punctures $\Sigma'$, which we interpret as marked points on $X'$. If $q'$ has $m$ unmarked singularities of orders $a_1,\dots,a_m$ and $n$ marked singularities of orders $b_1,\dots,b_n$, then $4g-4 = \sum_{i =1}^m a_i + \sum_{j=1}^n b_j$. \\

Every quadratic differential induces a canonical half-translation structure on the Riemann surface it is defined on. For this reason, we also refer to triples $(X,\Sigma,q)$ as half-translation surfaces. A cylinder $C$ on $(X,\Sigma,q)$ is an isometrically embedded copy in $X$ of an Euclidean cylinder $(\mathbf{R}/c \mathbf{Z}) \times (0,h)$ whose boundary is a union of saddle connections. The number $h$ is the height of the cylinder. The direction of a cylinder is the direction of its boundary saddle connections, considered as an element of $\mathbf{R}\text{P}^1$. The free homotopy class of the closed curve $(\mathbf{R}/c \mathbf{Z}) \times \{\frac{h}{2}\}$ is the core curve of the cylinder. A half-transaltion surface $(X,\Sigma,q)$ is periodic in some direction if $X$ is the union of the cylinders in that direction together with their boundaries.\\

\textit{Square-tiled surfaces.} A square-tiled surface is a half-translation surface that admits a polygon representation $\mathcal{P}$ made up of finitely many unit area squares with sides parallel to the axes of $\mathbf{R}^2 = \mathbf{C}$. The number of squares in $\mathcal{P}$ corresponds to the area of the square-tiled surface. Square-tiled surfaces are both horizontally and vertically periodic. \\

Let $(X,\Sigma,q)$ be a square-tiled surface. Let $C_1,\dots,C_n$ and $D_1 \,\dots,D_m$ be its horizontal and vertical cylinders. Let $h_1, \dots,h_n \in \mathbf{N}$ and $w_1,\dots,w_m \in \mathbf{N}$ be the heights of the horizontal and vertical cylinders. Let $\alpha_1,\dots,\alpha_n$ and $\beta_1,\dots,\beta_m$ be the core curves of the horizontal and vertical cylinders. Consider the integral multicurves $\gamma_1 = \sum_{i =1}^n h_i \cdot \alpha_i$ and $\gamma_2 = \sum_{j=1}^m w_j \cdot \beta_j$ on $X$. We say $\gamma_1$ and $\gamma_2$ are the horizontal and vertical core integral multi-curves of $(X,\Sigma,q)$.\\

\textit{Teichm\"uller and moduli spaces of quadratic differentials.} Let $Q\mathcal{T}_{g,n}$ denote the Teichm\"uller space of marked non-zero, integrable, holomorphic quadratic differentials on $S_{g,n}$. More precisely, $Q\mathcal{T}_{g,n}$ is the set of triples $(X,q,\phi)$, where $X$ is a punctured Riemann surface, $q$ is a non-zero, integrable, holomorphic quadratic differential on $X$, and $\phi \colon S_{g,n} \to X$ is an orientation-preserving diffeomorphism, modulo the equivalence relation $(X_1,q_1,\phi_1) \sim (X_2,q_2,\phi_2)$ if and only if there exists a conformal diffeomorphism $I \colon X_1 \to X_2$ isotopic to $\phi_2 \circ \phi_1^{-1}$ such that $I_* q_1 = q_2$. The forgetful map $Q\mathcal{T}_{g,n} \to \mathcal{T}_{g,n}$ given by $[(X,q,\phi)] \mapsto [(X,\phi)]$ makes $Q\mathcal{T}_{g,n}$ into a bundle over $\mathcal{T}_{g,n}$; after incorporating the zero section, it is actually the complex cotangent bundle of $\mathcal{T}_{g,n}$.\\

Alternatively, let $S \subseteq S_g$ be a subset of $n$ points in $S_g$. Then $Q\mathcal{T}_{g,n}$ may be interpreted as the Teichm\"uller space of marked non-zero, meromorphic quadratic differentials on $S_g$, holomorphic outside of $S$, and having at most simple poles at points of $S$. More precisely, $Q\mathcal{T}_{g,n}$ is the set of tuples $(X,\Sigma,q,\phi)$, where $X$ is a punctured Riemann surface, $\Sigma \subseteq X$ is a subset of $n$ points in $X$, $q$ is a non-zero, meromorphic quadratic differential on $X$, holomorphic outside of $\Sigma$, and with at most simple poles at points of $\Sigma$, and $\phi \colon (S_{g,n},S) \to (X,\Sigma)$ is an orientation-preserving diffeomorphism, modulo the equivalence relation $(X_1,\Sigma_1,q_1,\phi_1) \sim (X_2,\Sigma_2,q_2,\phi_2)$ if and only if there exists a conformal diffeomorphism $I \colon (X_1,\Sigma_1) \to (X_2,\Sigma_2)$ isotopic to $\phi_2 \circ \phi_1^{-1}$ through diffeomorphisms mapping $\Sigma_1$ to $\Sigma_2$ and such that $I_* q_1 = q_2$. We usually refer to $Q\mathcal{T}_{g,n}$ simply as the Teichm\"uller space of marked quadratic differentials on $S_{g,n}$. \\

The space $Q\mathcal{T}_{g,n}$ has a natural stratification induced by the order of singularities. Each connected components of every stratum of $Q\mathcal{T}_{g,n}$ has a natural complex structure induced by period coordinates. Marked square-tiled surfaces in $Q\mathcal{T}_{g,n}$ will be denoted by $Q\mathcal{T}_{g,n}(\mathbf{Z})$, as they correspond to integer points in period coordinates; we also refer to them as the integer points of $Q\mathcal{T}_{g,n}$.\\

The mapping class group $\text{Mod}_{g,n}$ acts on $Q\mathcal{T}_{g,n}$ by change of marking. The quotient $Q\mathcal{M}_{g,n} := Q\mathcal{T}_{g,n}/\text{Mod}_{g,n}$ is the moduli space of non-zero, integrable, holomorphic quadratic differentials on $S_{g,n}$. We usually refer to $Q\mathcal{M}_{g,n}$ simply as the moduli space of quadratic differentials on $S_{g,n}$. Square-tiled surfaces in $Q\mathcal{M}_{g,n}$ will be denoted by $Q\mathcal{M}_{g,n}(\mathbf{Z})$. Given any $[(X,q)] \in Q\mathcal{M}_{g,n}$, there is a natural bijection between $\text{Aut}(X,q)$ and the $\text{Mod}_{g,n}$-stabilizer of any point $[(X,q,\phi)]$ covering $[(X,q)]$ under the quotient map $Q\mathcal{T}_{g,n} \to Q\mathcal{M}_{g,n}$.\\

\textit{Measured geodesic laminations and singular measured foliations.} A geodesic lamination $\lambda$ on a complete, finite volume hyperbolic surface $X$ diffeomorphic to $S_{g,n}$ is a set of disjoint simple, complete geodesics whose union is a compact subset of $X$. A measured geodesic lamination is a geodesic lamination carrying an invariant transverse measure fully supported on the lamination. We can understand measured geodesic laminations by lifting them to a universal cover $\mathbf{H}^2 \to X$. A non-oriented geodesic on $\mathbf{H}^2$ is specified by a set of distinct points on the boundary at infinity $\partial^\infty \mathbf{H}^2 = S^1$. It follows that measured geodesic laminations on  diffeomorphic hyperbolic surfaces may be compared by passing to the boundary at infinity of their universal covers. Thus, the space of measured geodesic laminations on $X$ depends only on the underlying topological surface $S_{g,n}$. We denote the space of measured geodesic laminations on $S_{g,n}$ by $\mathcal{ML}_{g,n}$. It can be topologized by embedding it into the space of geodesic currents on $S_{g,n}$. By taking  geodesic representatives, integral multi-curves on $S_{g,n}$ can be interpreted as elements of $\mathcal{ML}_{g,n}$; we denote them by $\mathcal{ML}_{g,n}(\mathbf{Z})$ and refer to them as the integer points of $\mathcal{ML}_{g,n}$. For more details on the theory of measured geodesic laminations see \cite{Bon88}, \cite{Bon97}, and 8.3 in \cite{Mar16}.\\

Given two essential simple closed curves $\gamma_1$ and $\gamma_2$ on $S_{g,n}$, their geometric intersection number $i(\gamma_1,\gamma_2)$ is the minimum number of intersections among all transverse free-homotopy representatives of $\gamma_1$ and $\gamma_2$. Geometric intersection number can be extended by homogeneity and continuity to a pairing on $\mathcal{ML}_{g,n}$, which we still refer to as geometric intersection number. More precisely, there is a unique continuous, symmetric, bilinear form $i \colon \mathcal{ML}_{g,n} \times \mathcal{ML}_{g,n} \to \mathbf{R}_{\geq 0}$ which agrees with geometric intersection number on essential simple closed curves. For a proof see \cite{Bon88}.\\

We denote by $\mathcal{MF}_{g,n}$ the space of singular measured foliations on $S_{g,n}$ up to isotopy and Whitehead moves. There is a natural correspondence between singular measured foliations and measured geodesic laminations. Indeed, one can define a homeomorphism $\mathcal{MF}_{g,n}\to \mathcal{ML}_{g,n}$ by straightening the leaves of singular measured foliations to obtain measured geodesic laminations. In particular, it makes sense to talk about intersection numbers and integer points $\mathcal{MF}_{g,n}(\mathbf{Z}) \subseteq \mathcal{MF}_{g,n}$ of singular measured foliations. For more details on the theory of singular measured foliations see 11.2.2 in \cite{FM11} and Expos\'e 5 in \cite{FLP12}. For more details on the correspondence between singular measured foliations and measured geodesic laminations see \cite{Lev83}.\\

\textit{The Thurston measure.}  The space of measured geodesic laminations $\mathcal{ML}_{g,n}$ admits a $6g-6+2n$ dimensional piecewise integral linear structure induced by train track charts. The integer points of this structure are precisely the integral multi-curves $\mathcal{ML}_{g,n}(\mathbf{Z}) \subseteq \mathcal{ML}_{g,n}$. For each $L > 0$, consider the counting measure $\mu^L$ on $\mathcal{ML}_{g,n}$ given by
\begin{equation}
\label{ML_counting_measure}
\mu^L := \frac{1}{L^{6g-6+2n}} \sum_{\gamma \in \mathcal{ML}_{g,n}(\mathbf{Z})} \delta_{ \frac{1}{L} \cdot \gamma}.
\end{equation}
As $L \to \infty$, this sequence of counting measures converges to a non-zero, locally finite measure $\mu_{\text{Thu}}$ on $\mathcal{ML}_{g,n}$ called the Thurston measure. This measure is $\text{Mod}_{g,n}$-invariant and belongs to the Lebesgue measure class. It also satisfies the following scaling property: $\mu_{\text{Thu}}(t \cdot A) = t^{6g-6+2n} \cdot \mu_{\text{Thu}}(A)$ for every measurable set $A \subseteq \mathcal{ML}_{g,n}$ and every $t > 0$.\\

Train track charts also induce a $\text{Mod}_{g,n}$-invariant symplectic form $\omega_{\text{Thu}}$ on $\mathcal{ML}_{g,n}$ called the Thurston symplectic form. For more details on the definition of $\omega_{\text{Thu}}$ see $\S 3.2$ in \cite{PH92}. The top exterior power $v_{\text{Thu}} := \frac{1}{(3g-3+n)!} \bigwedge^{3g-3+n}\omega_{\text{Thu}}$ is called the Thurston volume form. In \cite{Mas85}, Masur showed that the action of $\text{Mod}_{g,n}$ on $\mathcal{ML}_{g,n}$ is ergodic with respect to $\mu_{\text{Thu}}$. As a consequence, $\mu_{\text{Thu}}$ is the unique, up to scaling, $\text{Mod}_{g,n}$-invariant measure on $\mathcal{ML}_{g,n}$ in the Lebesgue measure class. It follows that the measure induced by the Thurston volume form on $\mathcal{ML}_{g,n}$ is a multiple of $\mu_{\text{Thu}}$. Moreover, the scaling factor relating these measures can be computed explicitely.\\

\begin{proposition}
	\label{Thurston_measure_scaling_factor}
	If $\nu_{\text{Thu}}$ denotes the measure induced by the Thurston volume form on $\mathcal{ML}_{g,n}$, then
	\[
	\nu_{\text{Thu}} = 2^{2g-3+n} \cdot \mu_{Thu}.
	\]
\end{proposition}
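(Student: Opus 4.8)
As recalled above, Masur's ergodicity theorem already guarantees that $\nu_{\text{Thu}}$ and $\mu_{\text{Thu}}$ are proportional, say $\nu_{\text{Thu}} = C_{g,n} \cdot \mu_{\text{Thu}}$ with $C_{g,n} > 0$ depending only on $g$ and $n$; the entire content of the proposition is the evaluation $C_{g,n} = 2^{2g-3+n}$. The plan is to extract this constant from a single local computation in Dehn--Thurston coordinates. Fix a pants decomposition $P = \{\gamma_1, \dots, \gamma_{3g-3+n}\}$ of $S_{g,n}$ and pass to the associated Dehn--Thurston coordinates
\[
DT \colon \mathcal{ML}_{g,n} \xrightarrow{\ \sim\ } \Omega \subseteq \prod_{i=1}^{3g-3+n} \bigl( \mathbf{R}_{\geq 0} \times \mathbf{R} \bigr), \qquad \lambda \mapsto (m_i(\lambda), t_i(\lambda))_i ,
\]
where $m_i = i(\gamma_i, \cdot)$ are the intersection numbers and the $t_i$ are the twisting parameters, $\Omega$ being the full-measure region where $t_i \geq 0$ whenever $m_i = 0$. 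Note that $DT$ commutes with the scaling action on $\mathcal{ML}_{g,n}$.

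Two structural facts about these coordinates (Penner--Harer \cite{PH92}; see also \cite{FLP12} and \cite{FM11}) do the work. First, Dehn--Thurston coordinates are symplectic for the Thurston structure, $DT_* \omega_{\text{Thu}} = \sum_{i=1}^{3g-3+n} dm_i \wedge dt_i$, so that $DT_* v_{\text{Thu}}$, hence the measure $DT_* \nu_{\text{Thu}}$, is Lebesgue measure $\mathrm{Leb}_\Omega$ on $\Omega$. Second, $DT$ carries the integral multi-curves $\mathcal{ML}_{g,n}(\mathbf{Z})$ bijectively onto the set of integer points of $\Omega$ satisfying, at each of the $2g-2+n$ pairs of pants of $P$, the condition that the sum of the intersection numbers of its three cuffs be even. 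Regarded as linear conditions on $(m_i \bmod 2)_i \in \mathbf{F}_2^{3g-3+n}$, these $2g-2+n$ parity conditions satisfy exactly one relation — each pants curve contributes twice to the total of all cuff-perimeters, so the sum of all the conditions is vacuous — and therefore cut out a subgroup of index $2^{(2g-2+n)-1} = 2^{2g-3+n}$. Equivalently, in Dehn--Thurston coordinates the integral multi-curves form a discrete subset of $\Omega$ of asymptotic density $2^{-(2g-3+n)}$ relative to the full integer lattice, the twisting parameters contributing no further constraints.

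Combining the two facts finishes the argument. Since $DT$ intertwines scaling, the rescaled counting measures $\mu^L$ of $(\ref{ML_counting_measure})$ push forward under $DT$ to $\frac{1}{L^{6g-6+2n}}\sum_{x \in DT(\mathcal{ML}_{g,n}(\mathbf{Z}))} \delta_{x/L}$, which converges to $2^{-(2g-3+n)} \cdot \mathrm{Leb}_\Omega$; as $DT$ is a homeomorphism this gives $DT_*\mu_{\text{Thu}} = 2^{-(2g-3+n)} \cdot \mathrm{Leb}_\Omega = 2^{-(2g-3+n)} \cdot DT_* \nu_{\text{Thu}}$, i.e. $C_{g,n} = 2^{2g-3+n}$. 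The main obstacle is the second structural fact: identifying $DT(\mathcal{ML}_{g,n}(\mathbf{Z}))$ precisely — in particular treating pairs of pants with a self-glued cuff, and confirming that the $t_i$ impose no additional conditions — and checking that the $2g-2+n$ parity conditions have corank exactly one, so that the index is $2^{2g-3+n}$ and not $2^{2g-2+n}$. As consistency checks one can verify $S_{1,1}$ (no parity constraint, density $1$, $C_{1,1} = 1 = 2^{0}$) and $S_{0,4}$ (where $i(\gamma_1,\cdot)$ is always even, density $1/2$, $C_{0,4} = 2 = 2^{1}$), and also double-check that no overall scalar accompanies $\sum dm_i \wedge dt_i$ in the first fact.
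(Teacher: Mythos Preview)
The paper does not include a proof of this proposition; it is stated in the background section without argument. Your proof is correct, and it aligns precisely with computations the paper carries out elsewhere: your ``second structural fact'' --- that the $2g-2+n$ parity conditions in Dehn--Thurston coordinates have corank exactly one and hence cut out a sublattice of index $2^{2g-3+n}$ --- is exactly the observation the paper uses in Section~4 (see the paragraph following the definition of $I_L$) when computing the Thurston volume of a pants decomposition. The only ingredient you add beyond what the paper makes explicit is the first structural fact, that Dehn--Thurston coordinates are Darboux for $\omega_{\text{Thu}}$, which is indeed in Penner--Harer as you cite; once that is in hand, pairing it with the lattice-index count gives $C_{g,n}$ immediately. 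Your connectedness argument for corank one (each curve contributes twice to the total cuff sum, and connectedness of the dual graph rules out any proper sub-relation) is the right justification for the paper's otherwise unproved assertion that ``one of these conditions is redundant.''
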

$ $\vspace{+0.2cm}

\textit{The Hubbard-Masur map.} Let $\text{Re}, \text{Im} \colon Q\mathcal{T}_{g,n} \to \mathcal{ML}_{g,n}$ be the maps that assign to every marked non-zero, integrable, holomorphic quadratic differential in $Q\mathcal{T}_{g,n}$ its real and imaginary foliations interpreted as element of $\mathcal{ML}_{g,n}$. These maps are $\text{Mod}_{g,n}$-equivariant, so they induce maps $\textbf{Re}, \textbf{Im} \colon Q\mathcal{M}_{g,n} \to \mathcal{ML}_{g,n} / \text{Mod}_{g,n}$ on quotients. Let $\Delta \subseteq \mathcal{ML}_{g,n} \times \mathcal{ML}_{g,n}$ be the closed subset
\[
\Delta := \{(\lambda, \mu) \in \mathcal{ML}_{g,n} \times \mathcal{ML}_{g,n} \ | \ \exists  \alpha \in \mathcal{ML}_{g,n}: \  i(\lambda, \alpha) + i (\mu,\alpha) = 0\}.
\]
The Hubbard-Masur map introduced in (\ref{hubbard_masur_definition}) is then given by
\[
\begin{array}{c c c c}
h: & Q\mathcal{T}_{g,n} &\to & \mathcal{ML}_{g,n} \times \mathcal{ML}_{g,n} - \Delta \\
& [(X,q,\phi)] &\mapsto & (\text{Re}([(X,q,\phi)]), \text{Im}([(X,q,\phi)]))
\end{array}.
\]
$ $\vspace{+0.2cm}

\textit{Counting measures on $\text{Re}^{-1}(\lambda)$.} The subset $\text{Re}^{-1}(\lambda) \subseteq Q\mathcal{T}_{g,n}$ inherits a stratification from the one on $Q\mathcal{T}_{g,n}$. Relevant coordinates on connected components of stata of $\text{Re}^{-1}(\lambda)$ are obtained by considering the real parts of period coordinates on the corresponding connected components of strata of $Q\mathcal{T}_{g,n}$; we refer to such coordinates as \textit{real period coordinates}. For every $L>0$, consider the counting measure $m_{\text{Re}^{-1}(\lambda)}^L$ on $\text{Re}^{-1}(\lambda)$ given by
\begin{equation}
\label{counting_measure_re}
m_{\text{Re}^{-1}(\lambda)}^L := \frac{1}{L^{6g-6+2n}} \sum_{ \substack{[(X,q,\phi)] \in \text{Re}^{-1}(\lambda), \\ \text{Im}([(X,q,\phi)]) \in \frac{1}{L} \cdot \mathcal{ML}_{g,n}(\mathbf{Z})}} \delta_{[(X,q,\phi)]}.
\end{equation}
As $L \to \infty$, this sequence of counting measures converges to a non-zero, locally finite measure $m_{\text{Re}^{-1}(\lambda)}$ on $\text{Re}^{-1}(\lambda)$. This measure is $\text{Stab}(\lambda)$-invariant and belongs to the Lebesgue measure class. The principal stratum of $\text{Re}^{-1}(\lambda)$ is a full measure, open subset of $\text{Re}^{-1}(\lambda)$.\\

\textit{Measure theory of properly discontinuous group actions.} Let $X$ be a locally compact, Hausdorff, second countable topological space endowed with a properly discontinuous action of a group $G$. Notice $X/G$ is also a locally compact, Hausdorff, second countable topological space. Let $\pi \colon X \to X/G$ be the associated quotient map. As the action of $G$ on $X$ is properly discontinuous, we can cover $X$ by open sets $U$ invariant under the action of finite subgroups $\Gamma < G$ such that $gU \cap U = \emptyset$ for all $g \in G - \Gamma$. The quotient map restricts to $\pi|_U \colon U \to U/\Gamma \subseteq X/G$. Open sets $U/\Gamma \subseteq X/G$ of this form will be refered to as \textit{well covered}. Given a locally finite, $G$-invariant Borel measure $\mu$ on $X$, there is a canonical local pushforward measure $\pi_* \mu$ on $X/G$ defined in the following way.\\

\begin{definition} 
	\label{local_pushforward_measure}
	The local pushforward $\pi_* \mu$ is the unique locally finite Borel measure on $X/G$ satisfying the following property: If $U/\Gamma \subseteq X/G$ is a well covered open set, then $(\pi_* \mu)|_{U/\Gamma} =  \frac{1}{\# \Gamma} \cdot (\pi|_U)_\# (\mu|_U)$, where $(\pi|_U)_\# (\mu|_U)$ denotes the usual pushforward of the measure $\mu|_U$ under the map $\pi|_U$.\\
\end{definition}

Restriction to open sets and taking local pushforward are continuous operations between spaces of locally finite Borel measures endowed with the weak topology.\\

Suppose that $\mu$ is a locally finite, $G$-invariant Borel measure on $X$ of the form
\[
\mu = \sum_{x \in A} w(x) \cdot \delta_x,
\]
where $A \subseteq X$ is a $G$-invariant, discrete, closed subset of $X$ and $w \colon A \to \mathbf{R}_{\geq 0}$ is a $G$-invariant function. Then the local pushforward $\pi_* \mu$ is given by
\begin{equation}
\label{local_pushforward_atomic}
\pi_* \mu = \sum_{[x] \in A/G} \frac{1}{\# \text{Stab}(x)} \cdot w(x) \cdot \delta_{[x]}.
\end{equation}
\vspace{+0.2cm}

\textit{Growth of the number of simple closed hyperbolic geodesics.} We briefly review Mirzakhani's curve counting results in \cite{Mir08b}. Given a hyperbolic surface $X \in \mathcal{M}_{g,n}$ and a rational multi-curve $\gamma$ on $X$, consider the counting function
\begin{equation}
\label{counting_curves}
s(X,\gamma,L) := \# \{\alpha \in \text{Mod}_{g,n} \cdot \gamma \ | \ \ell_X(\alpha) \leq L \}.
\end{equation}
In words, $s(X,\gamma,L)$ is the number rational multi-curves on $X$ of topological type $[\gamma]$ and hyperbolic length $\leq L$. The following result, which is Theorem 1.1 in \cite{Mir08b}, describes the growth of the counting function $s(X,\gamma,L)$ as $L \to \infty$.\\

\begin{theorem}
	\label{mirzakhani_growth}
	For any hyperbolic surface $X \in \mathcal{M}_{g,n}$ and any rational muti-curve $\gamma$ on $X$,
	\[
	\lim_{L \to \infty} \frac{s(X,\gamma,L)}{L^{6g-6+2n}} = n_\gamma(X),
	\]
	where $n_\gamma \colon \mathcal{M}_{g,n} \to \mathbf{R}_{>0}$ is a continuous, proper function.\\
\end{theorem}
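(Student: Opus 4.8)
\textit{Proposed proof.} The plan is to rewrite the counting function as the mass the rescaled counting measures $\mu_\gamma^L$ of (\ref{curve_counting_measures}) assign to a fixed compact set, and then feed this into Mirzakhani's equidistribution theorem. Since the hyperbolic length function $\ell_X \colon \mathcal{ML}_{g,n} \to \mathbf{R}_{\geq 0}$ is continuous and homogeneous of degree one, writing $B_X := \{\lambda \in \mathcal{ML}_{g,n} \mid \ell_X(\lambda) \leq 1\}$ one has $\ell_X(\alpha) \leq L$ if and only if $\tfrac{1}{L}\cdot\alpha \in B_X$, so
\[
\frac{s(X,\gamma,L)}{L^{6g-6+2n}} = \mu_\gamma^L(B_X).
\]
The set $B_X$ is compact because $\ell_X$ is a proper function on $\mathcal{ML}_{g,n}$ (equivalently, the space of projective measured laminations is compact, $\ell_X$ being positive on nonzero laminations and homogeneous). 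By Theorem \ref{mirzakhani_measure_convergence_intro} the measures $\mu_\gamma^L$ converge to $\tfrac{c(\gamma)}{b_{g,n}}\cdot\mu_{\text{Thu}}$, so the task reduces to passing this convergence through evaluation on $B_X$.

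For that I would check that $B_X$ is a continuity set for $\mu_{\text{Thu}}$, i.e. $\mu_{\text{Thu}}(\partial B_X) = 0$. Since $\partial B_X \subseteq \{\ell_X = 1\}$, and the dilates $t\cdot\{\ell_X=1\} = \{\ell_X = t\}$ are pairwise disjoint with $\mu_{\text{Thu}}(\{\ell_X=t\}) = t^{6g-6+2n}\mu_{\text{Thu}}(\{\ell_X=1\})$ by the scaling property of $\mu_{\text{Thu}}$, while $\{1\leq \ell_X\leq 2\}$ is compact and hence of finite $\mu_{\text{Thu}}$-mass, one must have $\mu_{\text{Thu}}(\{\ell_X=1\}) = 0$. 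Being compact and a continuity set, $B_X$ satisfies $\mu_\gamma^L(B_X) \to \tfrac{c(\gamma)}{b_{g,n}}\mu_{\text{Thu}}(B_X)$ by the portmanteau theorem for vague convergence of locally finite measures (the $\limsup$ bound uses a compactly supported continuous majorant of $\mathbf{1}_{B_X}$). One is then led to set
\[
n_\gamma(X) := \frac{c(\gamma)}{b_{g,n}} \cdot \mu_{\text{Thu}}(B_X),
\]
which is finite since $B_X$ is compact and $\mu_{\text{Thu}}$ locally finite, and positive since $B_X$ contains a neighbourhood of $0$ and $\mu_{\text{Thu}}$ charges open sets. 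As $B_{\phi\cdot X} = \phi(B_X)$ and $\mu_{\text{Thu}}$ is $\text{Mod}_{g,n}$-invariant, $n_\gamma$ descends to a function $n_\gamma \colon \mathcal{M}_{g,n} \to \mathbf{R}_{>0}$.

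Continuity of $n_\gamma$ would follow from continuity of $X \mapsto \mu_{\text{Thu}}(B_X)$: the length functions $\ell_X$ vary continuously with $X$, uniformly on compact subsets of $\mathcal{ML}_{g,n}$, so if $X_k \to X$ then the indicators $\mathbf{1}_{B_{X_k}}$ converge to $\mathbf{1}_{B_X}$ pointwise off the $\mu_{\text{Thu}}$-null set $\{\ell_X=1\}$, with all $B_{X_k}$ contained in a fixed compact set; dominated convergence then gives $\mu_{\text{Thu}}(B_{X_k}) \to \mu_{\text{Thu}}(B_X)$.

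Properness is, I expect, the main remaining obstacle. Here I would invoke Mumford's compactness criterion: if $[X_k]$ leaves every compact subset of $\mathcal{M}_{g,n}$, then the systole $\varepsilon_k$ of $X_k$ tends to $0$. Completing a shortest simple closed geodesic to a pair of pants decomposition of $X_k$ and working in adapted Dehn--Thurston coordinates, the standard estimates relating hyperbolic length to these coordinates (a short curve supports measured laminations of large weight but small length, while curves crossing its thin collar are long) show that $B_{X_k}$ contains a coordinate region whose $\mu_{\text{Thu}}$-volume diverges as $\varepsilon_k \to 0$; hence $n_\gamma(X_k) \to \infty$. The genuinely deep input in the whole argument is the equidistribution Theorem \ref{mirzakhani_measure_convergence_intro}, whose own proof combines McShane-type identities, Mirzakhani's integration formula over moduli space, and Masur's ergodicity of the $\text{Mod}_{g,n}$-action on $(\mathcal{ML}_{g,n},\mu_{\text{Thu}})$; the careful Dehn--Thurston length bookkeeping needed for properness is the other technical point.
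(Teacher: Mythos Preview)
The paper does not give its own proof of this statement: Theorem~\ref{mirzakhani_growth} is quoted as Theorem~1.1 of \cite{Mir08b}. Your argument (rewriting $s(X,\gamma,L)/L^{6g-6+2n}$ as $\mu_\gamma^L(B_X)$, checking that $B_X$ is compact with $\mu_{\text{Thu}}(\partial B_X)=0$ via the scaling trick, and applying Portmanteau to Theorem~\ref{mirzakhani_measure_convergence_intro}) is correct and is the standard deduction of counting from equidistribution. It moreover yields the explicit formula $n_\gamma(X)=c(\gamma)\,B(X)/b_{g,n}$, which is exactly Theorem~\ref{mirzakhani_function}; the paper itself remarks that Theorem~\ref{mirzakhani_function} is ``a direct consequence of Theorem~\ref{mirzakhani_measure_convergence_intro},'' so your approach is fully aligned with how the paper organizes these background results. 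Your continuity and properness sketches for $X\mapsto B(X)$ are also along the right lines.

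The only caveat is the one you already flag: within \cite{Mir08b} the equidistribution theorem and the counting theorem are developed together, with the polynomial behaviour of $P(L,\gamma)$ from Mirzakhani's integration formula as common input, so as a self-contained proof of Theorem~\ref{mirzakhani_growth} your argument simply relocates the hard work to Theorem~\ref{mirzakhani_measure_convergence_intro}. Within the logical framework of the present paper, where both are taken as imported results, that is entirely appropriate.
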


For every $X \in \mathcal{T}_{g,n}$, consider the compact subset $B_X \subseteq \mathcal{ML}_{g,n}$ given by
\[
B_X := \{\lambda \in \mathcal{ML}_{g,n} \ | \ \ell_X(\lambda) \leq 1 \}.
\]
Consider the continuous map
\[
\begin{array}{c c c c}
B: & T_{g,n} &\to & \mathbf{R}_{> 0} \\
& X &\mapsto & \mu_{\text{Thu}}(B_X)
\end{array}.
\]
Let $\widehat{\mu}_{wp}$ be the local pushforward of the Weil-Petersson measure $\mu_{wp}$ on $\mathcal{T}_{g,n}$ under the quotient map $\mathcal{T}_{g,n} \to \mathcal{M}_{g,n}$. In \cite{Mir08b}, the following integral, relevant in the statements of theorems that follow, is proved to be finite, and moreover, a positive rational multiple of $\pi^{6g-6+2n}$:
\[
b_{g,n} := \int_{\mathcal{M}_{g,n}} B(X) \ d\widehat{\mu}_{wp}.
\]
\vspace{+0.2cm}

To every rational multi-curve $\gamma$ on $S_{g,n}$ one can associate a positive rational number $c(\gamma) \in \mathbf{Q}_{>0}$ in the following way. Consider the integral
\[
P(L,\gamma) := \int_{\mathcal{M}_{g,n}} s(X,\gamma,L) \ d\widehat{\mu}_{wp}.
\]
Mirzakhani's Weil-Petersson integration techniques in \cite{Mir07} are applied in \linebreak \cite{Mir08b} to show that $P(L,\gamma)$ is a polynomial of degree $6g - 6 + 2n$, closely related to the Weil-Petersson volume polynomial of the moduli space of bordered Riemann surfaces homeomorphic to the surface with boundary obtained by cutting $S$ along $\gamma$. Let $c(\gamma)$ be the leading coefficient of this polynomial, that is
\[
c(\gamma) := \lim_{L \to \infty} \frac{P(L,\gamma)}{L^{6g-6+2n}}.
\]
Explicit formulas for computing $c(\gamma)$ as a sum of Weil-Petersson volumes are given in \cite{Mir08b}. In particular, it is proved that $c(\gamma) \in \mathbf{Q}_{>0}$. Notice that $c(\gamma)$ depends only on the topological type of $\gamma$. We will refer to $c(\gamma)$ as the \textit{frequency} of multi-curves on $S_{g,n}$ of topological type $[\gamma]$. \\

The following result, which is Theorem 1.2 in \cite{Mir08b}, describes the dependence of the function $n_\gamma \colon \mathcal{M}_{g,n} \to \mathbf{R}_{>0}$ in Theorem \ref{mirzakhani_growth} with respect to the hyperbolic structure $X \in \mathcal{M}_{g,n}$ and the rational multi-curve $\gamma$ on $S_{g,n}$; it is a direct consequence of Theorem \ref{mirzakhani_measure_convergence_intro}.\\

\begin{theorem}
	\label{mirzakhani_function}
	For any hyperbolic surface $X \in \mathcal{M}_{g,n}$ and any rational muti-curve $\gamma$ on $X$,
	\[
	n_{\gamma}(X) = \frac{c(\gamma) \cdot B(X)}{b_{g,n}}.
	\]
\end{theorem}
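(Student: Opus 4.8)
The plan is to deduce the formula $n_\gamma(X) = c(\gamma) B(X)/b_{g,n}$ directly from the measure-convergence statement in Theorem \ref{mirzakhani_measure_convergence_intro} together with the definition of $n_\gamma$ in Theorem \ref{mirzakhani_growth}. First I would fix a hyperbolic structure $X \in \mathcal{T}_{g,n}$ lying over the given point of $\mathcal{M}_{g,n}$ and a rational multi-curve $\gamma$ on $S_{g,n}$, and rewrite the counting function $s(X,\gamma,L)$ as an evaluation of the counting measure $\mu_\gamma^L$ from (\ref{curve_counting_measures}) on the compact set $B_X$. Concretely, $\alpha \in \text{Mod}_{g,n}\cdot\gamma$ satisfies $\ell_X(\alpha) \le L$ exactly when $\tfrac{1}{L}\cdot\alpha \in B_X$, since $\ell_X$ is homogeneous of degree one on $\mathcal{ML}_{g,n}$; hence
\[
\frac{s(X,\gamma,L)}{L^{6g-6+2n}} = \mu_\gamma^L(B_X).
\]

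Next I would invoke Theorem \ref{mirzakhani_measure_convergence_intro}, which gives $\mu_\gamma^L \to \tfrac{c(\gamma)}{b_{g,n}}\cdot\mu_{\text{Thu}}$ in the weak topology on locally finite measures on $\mathcal{ML}_{g,n}$ as $L \to \infty$. The set $B_X$ is compact, so weak convergence of measures does not by itself control $\mu_\gamma^L(B_X)$; one needs that $B_X$ is a continuity set for the limit measure, i.e. $\mu_{\text{Thu}}(\partial B_X) = 0$. This is where I expect the only genuine subtlety to lie: the boundary $\partial B_X = \{\lambda : \ell_X(\lambda) = 1\}$ is the $1$-level set of the continuous, homogeneous-degree-one function $\ell_X$, and since $\mu_{\text{Thu}}$ scales with degree $6g-6+2n > 0$ and is locally finite, the level sets $\{\ell_X = t\}$ are pairwise "disjoint" in measure in the sense that $\mu_{\text{Thu}}(\{\ell_X \le t\})$ is a strictly increasing continuous function of $t$ (indeed equal to $t^{6g-6+2n}B(X)$ by the scaling property), forcing $\mu_{\text{Thu}}(\{\ell_X = 1\}) = 0$. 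With this in hand, the portmanteau theorem for weak convergence of locally finite measures yields
\[
\lim_{L\to\infty}\mu_\gamma^L(B_X) = \frac{c(\gamma)}{b_{g,n}}\cdot\mu_{\text{Thu}}(B_X) = \frac{c(\gamma)\cdot B(X)}{b_{g,n}}.
\]

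Finally, comparing with Theorem \ref{mirzakhani_growth}, which asserts $s(X,\gamma,L)/L^{6g-6+2n} \to n_\gamma(X)$, the limits must agree, giving $n_\gamma(X) = c(\gamma)B(X)/b_{g,n}$ as claimed; since this identity is invariant under the mapping class group action it descends to $\mathcal{M}_{g,n}$. The main obstacle, as indicated, is the justification that $B_X$ is a $\mu_{\text{Thu}}$-continuity set; everything else is bookkeeping with the homogeneity of $\ell_X$ and the definitions of $B$ and $b_{g,n}$. An alternative to the portmanteau argument, if one wishes to avoid discussing boundaries, is to sandwich $B_X$ between the open set $\{\ell_X < 1\}$ and closed sets $\{\ell_X \le 1-\varepsilon\}$, apply the lower/upper semicontinuity halves of weak convergence, and let $\varepsilon \to 0$ using continuity of $t \mapsto t^{6g-6+2n}B(X)$.
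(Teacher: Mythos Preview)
Your proposal is correct and matches the paper's approach: the paper does not give a proof but simply states that Theorem \ref{mirzakhani_function} is ``a direct consequence of Theorem \ref{mirzakhani_measure_convergence_intro},'' and your argument is precisely the standard way to make that deduction explicit. Your verification that $B_X$ is a $\mu_{\text{Thu}}$-continuity set via the scaling relation $\mu_{\text{Thu}}(\{\ell_X \le t\}) = t^{6g-6+2n} B(X)$ is exactly the point one needs to justify applying Portmanteau, and the paper uses the same scaling idea elsewhere (see the remark before Proposition \ref{no_escape_of_mass}).
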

\vspace{+0.2cm}

\section{Proofs of main results}

\textit{Proof of Theorem \ref{counting_quadratic_differentials_intro}.} We now present the proof of Theorem \ref{counting_quadratic_differentials_intro} in full detail. Let $\gamma_1$ and $\gamma_2$ be two integral multi-curves on $S_{g,n}$. Recall the definition of the function  $s(\gamma_1,\gamma_2,L)$ in (\ref{counting_function_square_tiled_surfaces}). It is convenient to write $s(\gamma_1,\gamma_2,L)$ as
\[
s(\gamma_1,\gamma_2,L) = \sum_{\substack{[[(X,q,\phi)]] \in Q\mathcal{T}_{g,n}/\text{Mod}_{g,n}, \\ \text{Re}([(X,q,\phi)]) \in \text{Mod}_{g,n} \cdot \gamma_1, \ \text{Im}([(X,q,\phi)]) \in  \text{Mod}_{g,n} \cdot \gamma_2, \\ \text{Area}([(X,q,\phi)]) \leq L }} \frac{1}{\# \text{Stab}([(X,q,\phi)])},
\]
where $\text{Stab}([(X,q,\phi)])$ is the stabilizer of the marked quadratic differential $[(X,q,\phi)] \allowbreak \in Q\mathcal{T}_{g,n}$ with respect to the action of $\text{Mod}_{g,n}$ on $Q\mathcal{T}_{g,n}$. To better understand $s(\gamma_1,\gamma_2,L)$ we relate it to a counting problem on $\mathcal{ML}_{g,n}$ by using the Hubbard-Masur map. It follows from Theorem \ref{hubbard_masur_map} that
\[
s(\gamma_1,\gamma_2,L) = \sum_{\substack{[(\alpha,\beta)] \in (\mathcal{ML}_{g,n}\times \mathcal{ML}_{g,n}- \Delta)/\text{Mod}_{g,n}, \\ \alpha \in \text{Mod}_{g,n} \cdot \gamma_1,  \ \beta \in \text{Mod}_{g,n} \cdot \gamma_2, \\ i(\alpha,\beta) \leq L }} \frac{1}{\# (\text{Stab}(\alpha) \cap \text{Stab}(\beta))},
\]
where $\text{Stab}(\alpha)$ and $\text{Stab}(\beta)$ are the stabilizers of the multi-curves $\alpha$ and $\beta$ with respect to the $\text{Mod}_{g,n}$ action on $\mathcal{ML}_{g,n}$. In this sum it is enough to consider equivalence classes of pairs of the form $(\gamma_1,\beta) \in \mathcal{ML}_{g,n}\times \mathcal{ML}_{g,n}- \Delta$ with $\beta \in \text{Mod}_{g,n} \cdot \gamma_2$ and $i(\gamma_1,\beta) \leq L$. Notice that $[\phi] \in \text{Mod}_{g,n}$ identifies the pairs $(\gamma_1,\beta)$ and $(\gamma_1,\beta')$ if and only if $[\phi].\beta = \beta'$ and $\phi \in \text{Stab}(\gamma_1)$. In particular, it is enough to consider the action of $\text{Stab}(\gamma_1)$ instead of the action of the whole mapping class group $\text{Mod}_{g,n}$. Recall that by definition $\beta \in \mathcal{ML}_{g,n}(\gamma_1)$ if and only if $(\gamma_1,\beta) \in \mathcal{ML}_{g,n}\times \mathcal{ML}_{g,n}- \Delta$. With all these considerations in mind we get
\begin{equation}
\label{simplified_sum_2}
s(\gamma_1,\gamma_2,L) = \sum_{\substack{[\beta]\in \mathcal{ML}_{g,n}(\gamma_1)/\text{Stab}(\gamma_1), \\ \beta \in \text{Mod}_{g,n} \cdot \gamma_2, \\ i(\gamma_1,\beta) \leq L }} \frac{1}{\# (\text{Stab}(\gamma_1) \cap \text{Stab}(\beta))}.
\end{equation}
\vspace{+0.2cm}

Once in this setting we can use Mirzakhani's curve counting techniques as follows. Recall the definition of the counting measures $\mu_{\gamma_2}^L$ on $\mathcal{ML}_{g,n}$ in (\ref{curve_counting_measures}). Theorem \ref{mirzakhani_measure_convergence_intro} shows that $\mu_{\gamma_2}^L \to \frac{c(\gamma_2)}{b_{g,n}}\cdot \mu_{\text{Thu}}$ as $L \to \infty$. As $\mathcal{ML}_{g,n}(\gamma_1) \subseteq \mathcal{ML}_{g,n}$ is open,  $\mu_{\gamma_1}^L|_{\mathcal{ML}_{g,n}(\gamma_1)} \to \frac{c(\gamma_1)}{b_g} \cdot \mu_{\text{Thu}}|_{\mathcal{ML}_{g,n}(\gamma_1)}$ as $L \to \infty$. The following lemma, which is a direct consequence of Theorem \ref{hubbard_masur_slice}, is crucial to pass to the quotient $\mathcal{ML}_{g,n}(\gamma_1)/\text{Stab}(\gamma_1)$.\\

\begin{lemma}
	The action of $\text{Stab}(\gamma_1)$ on $\mathcal{ML}_{g,n}(\gamma_1)$ is properly discontinuous.\\
\end{lemma}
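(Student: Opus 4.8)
The plan is to deduce proper discontinuity of the $\text{Stab}(\gamma_1)$-action on $\mathcal{ML}_{g,n}(\gamma_1)$ by transporting the corresponding property of the mapping class group action on Teichm\"uller space through the Hubbard-Masur slice. First I would invoke Theorem \ref{hubbard_masur_slice}, which provides a $\text{Stab}(\gamma_1)$-equivariant homeomorphism $h_{\gamma_1} \colon \mathcal{ML}_{g,n}(\gamma_1) \to \mathcal{T}_{g,n}$. Equivariance here means that for every $\psi \in \text{Stab}(\gamma_1)$ and every $\mu \in \mathcal{ML}_{g,n}(\gamma_1)$ one has $h_{\gamma_1}(\psi \cdot \mu) = \psi \cdot h_{\gamma_1}(\mu)$, where on the right $\text{Stab}(\gamma_1)$ acts via the restriction of the $\text{Mod}_{g,n}$-action on $\mathcal{T}_{g,n}$.

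Next I would recall the classical fact that $\text{Mod}_{g,n}$ acts properly discontinuously on $\mathcal{T}_{g,n}$: for every compact $K \subseteq \mathcal{T}_{g,n}$ the set $\{\psi \in \text{Mod}_{g,n} \ | \ \psi K \cap K \neq \emptyset\}$ is finite. Restricting the acting group to the subgroup $\text{Stab}(\gamma_1) < \text{Mod}_{g,n}$ preserves proper discontinuity, since the relevant set of group elements only shrinks. Thus $\text{Stab}(\gamma_1)$ acts properly discontinuously on $\mathcal{T}_{g,n}$.

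Finally, proper discontinuity is invariant under equivariant homeomorphisms: if $f \colon Y \to Z$ is a $G$-equivariant homeomorphism and $G$ acts properly discontinuously on $Z$, then for compact $K \subseteq Y$ the image $f(K) \subseteq Z$ is compact, and $\{g \in G \ | \ gK \cap K \neq \emptyset\} = \{g \in G \ | \ f(gK) \cap f(K) \neq \emptyset\} = \{g \in G \ | \ g f(K) \cap f(K) \neq \emptyset\}$, which is finite. Applying this with $f = h_{\gamma_1}$, $G = \text{Stab}(\gamma_1)$, $Y = \mathcal{ML}_{g,n}(\gamma_1)$, and $Z = \mathcal{T}_{g,n}$ yields the claim. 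There is no real obstacle here; the only point requiring a modicum of care is being explicit that the two notions of $\text{Stab}(\gamma_1)$-action — the one on $\mathcal{ML}_{g,n}(\gamma_1)$ and the one on $\mathcal{T}_{g,n}$ — are intertwined by $h_{\gamma_1}$, which is precisely the equivariance asserted in Theorem \ref{hubbard_masur_slice}, and recalling that $\mathcal{ML}_{g,n}(\gamma_1)$ and $\mathcal{T}_{g,n}$ are locally compact, Hausdorff, and second countable so that the compact-set formulation of proper discontinuity is the appropriate one.
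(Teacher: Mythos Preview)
Your proof is correct and follows exactly the paper's approach: invoke the $\text{Stab}(\gamma_1)$-equivariant homeomorphism $h_{\gamma_1}\colon \mathcal{ML}_{g,n}(\gamma_1)\to\mathcal{T}_{g,n}$ from Theorem~\ref{hubbard_masur_slice}, use that $\text{Mod}_{g,n}$ (hence any subgroup) acts properly discontinuously on $\mathcal{T}_{g,n}$, and transport this back through the equivariant homeomorphism. The paper's proof is just a terser version of what you wrote.
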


\begin{proof}
	Theorem \ref{hubbard_masur_slice} provides a $\text{Stab}(\gamma_1)$-equivariant homeomorphism \linebreak $\mathcal{ML}_{g,n}(\gamma_1) \to \mathcal{T}_{g,n}$. The group $\text{Stab}(\gamma_1)$ acts properly discontinously on $\mathcal{T}_{g,n}$ because the whole mapping class group $\text{Mod}_{g,n}$ does so. We deduce that the action of $\text{Stab}(\gamma_1)$ on $\mathcal{ML}_{g,n}(\gamma_1)$ is properly discontinuous.
\end{proof}
$ $\vspace{+0.15cm}

The measures $\mu_{\gamma_2}^L|_{\mathcal{ML}_{g,n}(\gamma_1)}$ and $\mu_{\text{Thu}}|_{\mathcal{ML}_{g,n}(\gamma_1)}$ on $\mathcal{ML}_{g,n}(\gamma_1)$ are $\text{Stab}(\gamma_1)$-invariant because they are the restriction of $\text{Mod}_{g,n}$-invariant measures to a $\text{Stab}(\gamma_1)$-invariant set. Following Definition \ref{local_pushforward_measure} we construct the local pushforwards $\widehat{\mu}_{\gamma_2}^L$ and $\widehat{\mu}_{\text{Thu}}$ of these measures  under the quotient map $\mathcal{ML}_{g,n}(\gamma_1) \to \mathcal{ML}_{g,n}(\gamma_1)/\text{Stab}(\gamma_1)$. As taking local pushforward is a continuous operation, we deduce the following:\\

\begin{proposition}
	\label{pushforward_count_measure_convergence}
	For any pair of integral multi-curves $\gamma_1$ and $\gamma_2$ on $S_{g,n}$,
	\[
	\widehat{\mu}_{\gamma_2}^L\to \frac{c(\gamma_2)}{b_{g,n}} \cdot \widehat{\mu}_{\text{Thu}}.
	\]
\end{proposition}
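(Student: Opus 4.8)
\textit{Proof proposal.} The plan is to obtain the convergence by transporting Theorem~\ref{mirzakhani_measure_convergence_intro} through two continuous operations: restriction to an open subset and local pushforward under a properly discontinuous quotient. First I would invoke Theorem~\ref{mirzakhani_measure_convergence_intro} to get $\mu_{\gamma_2}^L \to \frac{c(\gamma_2)}{b_{g,n}} \cdot \mu_{\text{Thu}}$ in the weak topology on locally finite Borel measures on $\mathcal{ML}_{g,n}$. Next I would note that $\mathcal{ML}_{g,n}(\gamma_1)$ is open in $\mathcal{ML}_{g,n}$ — it is the preimage of the open set $\mathcal{ML}_{g,n}\times\mathcal{ML}_{g,n}-\Delta$ under the continuous map $\mu \mapsto (\gamma_1,\mu)$, since $\Delta$ is closed — and that restriction to an open set is continuous in the weak topology (a compactly supported continuous function on the open set extends by zero to a compactly supported continuous function on the ambient space). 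This yields $\mu_{\gamma_2}^L|_{\mathcal{ML}_{g,n}(\gamma_1)} \to \frac{c(\gamma_2)}{b_{g,n}} \cdot \mu_{\text{Thu}}|_{\mathcal{ML}_{g,n}(\gamma_1)}$.

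Then I would check that the setup of Definition~\ref{local_pushforward_measure} applies: $\mathcal{ML}_{g,n}(\gamma_1)$ is locally compact, Hausdorff, and second countable, and by the preceding lemma the action of $\text{Stab}(\gamma_1)$ on it is properly discontinuous. Both $\mu_{\gamma_2}^L|_{\mathcal{ML}_{g,n}(\gamma_1)}$ and $\mu_{\text{Thu}}|_{\mathcal{ML}_{g,n}(\gamma_1)}$ are $\text{Stab}(\gamma_1)$-invariant, being restrictions of $\text{Mod}_{g,n}$-invariant measures to the $\text{Stab}(\gamma_1)$-invariant set $\mathcal{ML}_{g,n}(\gamma_1)$; moreover the first is atomic, supported on the scaled orbit $\frac{1}{L}\cdot\big((\text{Mod}_{g,n}\cdot\gamma_2)\cap\mathcal{ML}_{g,n}(\gamma_1)\big)$, which is a discrete, closed, $\text{Stab}(\gamma_1)$-invariant subset of $\mathcal{ML}_{g,n}(\gamma_1)$ (the orbit $\text{Mod}_{g,n}\cdot\gamma_2$ is discrete and closed in $\mathcal{ML}_{g,n}$, hence so is its intersection with the open subset). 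Thus the local pushforwards $\widehat{\mu}_{\gamma_2}^L$ (given explicitly by formula~(\ref{local_pushforward_atomic})) and $\widehat{\mu}_{\text{Thu}}$ are the measures named in the statement.

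Finally, since taking the local pushforward under the quotient map $\mathcal{ML}_{g,n}(\gamma_1)\to\mathcal{ML}_{g,n}(\gamma_1)/\text{Stab}(\gamma_1)$ is a continuous operation between spaces of locally finite Borel measures with the weak topology, as recalled after Definition~\ref{local_pushforward_measure}, I would apply it to the convergence of the first paragraph to conclude $\widehat{\mu}_{\gamma_2}^L \to \frac{c(\gamma_2)}{b_{g,n}} \cdot \widehat{\mu}_{\text{Thu}}$. The hard part here is not really hard: all the substance is in Theorem~\ref{mirzakhani_measure_convergence_intro} and the proper discontinuity lemma, and the only points genuinely needing care are verifying that restriction to open sets and local pushforward are continuous for weak convergence of \emph{locally finite} (not necessarily finite) measures, and that the mapping class group orbit $\text{Mod}_{g,n}\cdot\gamma_2$ stays closed and discrete after intersection with $\mathcal{ML}_{g,n}(\gamma_1)$, so that the atomic description of $\widehat{\mu}_{\gamma_2}^L$ via (\ref{local_pushforward_atomic}) is legitimate.
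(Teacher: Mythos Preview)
Your proposal is correct and follows essentially the same approach as the paper: invoke Theorem~\ref{mirzakhani_measure_convergence_intro}, restrict to the open set $\mathcal{ML}_{g,n}(\gamma_1)$, note the $\text{Stab}(\gamma_1)$-invariance of the restricted measures, and apply the continuity of local pushforward under the properly discontinuous quotient. You have simply made explicit several verifications (openness of $\mathcal{ML}_{g,n}(\gamma_1)$, applicability of Definition~\ref{local_pushforward_measure}, discreteness of the orbit) that the paper leaves implicit in the surrounding discussion.
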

\vspace{+0.25cm}

Consider the subsets
\begin{align*}
B(\gamma_1) &:= \{\lambda \in \mathcal{ML}_{g,n}(\gamma_1) \ | \ i(\gamma_1,\lambda) \leq 1\} \subseteq \mathcal{ML}_{g,n}(\gamma_1), \\
\widehat{B}(\gamma_1) &:= B(\gamma_1)/\text{Stab}(\gamma_1) \subseteq \mathcal{ML}_{g,n}(\gamma_1)/\text{Stab}(\gamma_1).
\end{align*}
The following proposition brings us back to our original counting problem.\\

\begin{proposition}
	\label{measure_relates_to_count}
	For any pair of integral multi-curves $\gamma_1$ and $\gamma_2$ on $S_{g,n}$ and any $L >0$,
	\[
	\frac{s(\gamma_1,\gamma_2,L)}{L^{6g-6+2n}} = \widehat {\mu}_{\gamma_2}^L(\widehat{B}(\gamma_1)).
	\]
\end{proposition}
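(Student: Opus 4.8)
The plan is to evaluate the left-hand side directly from the definition of $\widehat{\mu}_{\gamma_2}^L$ as a local pushforward of an atomic measure, using the explicit formula (\ref{local_pushforward_atomic}), and then to match the resulting sum term-by-term against the expression (\ref{simplified_sum_2}) for $s(\gamma_1,\gamma_2,L)$ already established.

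First I would unwind the restriction. By (\ref{curve_counting_measures}),
\[
\mu_{\gamma_2}^L\big|_{\mathcal{ML}_{g,n}(\gamma_1)} = \frac{1}{L^{6g-6+2n}} \sum_{\substack{\alpha \in \text{Mod}_{g,n} \cdot \gamma_2, \\ \frac{1}{L} \cdot \alpha \in \mathcal{ML}_{g,n}(\gamma_1)}} \delta_{\frac{1}{L} \cdot \alpha}.
\]
Since whether a pair of measured geodesic laminations fills $S_{g,n}$ is invariant under scaling, $\frac{1}{L}\cdot\alpha \in \mathcal{ML}_{g,n}(\gamma_1)$ if and only if $\alpha \in \mathcal{ML}_{g,n}(\gamma_1)$. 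Set $A := (\text{Mod}_{g,n}\cdot\gamma_2) \cap \mathcal{ML}_{g,n}(\gamma_1)$. Because $\text{Mod}_{g,n}\cdot\gamma_2$ is discrete and closed in $\mathcal{ML}_{g,n}$ and $\mathcal{ML}_{g,n}(\gamma_1)$ is open and $\text{Stab}(\gamma_1)$-invariant, the rescaled set $\frac{1}{L}\cdot A$ is a discrete, closed, $\text{Stab}(\gamma_1)$-invariant subset of $\mathcal{ML}_{g,n}(\gamma_1)$, and the weight function is the constant $L^{-(6g-6+2n)}$, hence $\text{Stab}(\gamma_1)$-invariant. The action of $\text{Stab}(\gamma_1)$ on $\mathcal{ML}_{g,n}(\gamma_1)$ is properly discontinuous by the Lemma above, so Definition \ref{local_pushforward_measure} and formula (\ref{local_pushforward_atomic}) apply and give
\[
\widehat{\mu}_{\gamma_2}^L = \frac{1}{L^{6g-6+2n}} \sum_{[\alpha]\in A/\text{Stab}(\gamma_1)} \frac{1}{\#\!\left(\text{Stab}(\gamma_1) \cap \text{Stab}\!\left(\tfrac{1}{L}\cdot\alpha\right)\right)} \, \delta_{\left[\frac{1}{L}\cdot\alpha\right]}.
\]

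Next I would identify the stabilizers and the evaluation set. Since the $\text{Mod}_{g,n}$-action on $\mathcal{ML}_{g,n}$ commutes with scaling, $\text{Stab}(\tfrac{1}{L}\cdot\alpha) = \text{Stab}(\alpha)$, so the weights become $1/\#(\text{Stab}(\gamma_1)\cap\text{Stab}(\alpha))$. Moreover $B(\gamma_1)$ is $\text{Stab}(\gamma_1)$-invariant: for $\phi \in \text{Stab}(\gamma_1)$ one has $i(\gamma_1, \phi\cdot\lambda) = i(\phi\cdot\gamma_1, \phi\cdot\lambda) = i(\gamma_1,\lambda)$. Hence $\widehat{B}(\gamma_1)$ is saturated with respect to the quotient map, and a point $[\tfrac{1}{L}\cdot\alpha]$ lies in $\widehat{B}(\gamma_1)$ if and only if $\tfrac{1}{L}\cdot\alpha \in B(\gamma_1)$, that is, if and only if $i(\gamma_1,\alpha) \leq L$. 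Therefore
\[
\widehat{\mu}_{\gamma_2}^L(\widehat{B}(\gamma_1)) = \frac{1}{L^{6g-6+2n}} \sum_{\substack{[\alpha]\in A/\text{Stab}(\gamma_1), \\ i(\gamma_1,\alpha) \leq L}} \frac{1}{\#\!\left(\text{Stab}(\gamma_1) \cap \text{Stab}(\alpha)\right)}.
\]

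Finally, I would observe that the sum on the right is exactly $L^{-(6g-6+2n)}$ times the sum (\ref{simplified_sum_2}): both index sets are the $\text{Stab}(\gamma_1)$-orbits of elements $\beta \in \text{Mod}_{g,n}\cdot\gamma_2$ that lie in $\mathcal{ML}_{g,n}(\gamma_1)$ and satisfy $i(\gamma_1,\beta) \leq L$, and both assign the weight $1/\#(\text{Stab}(\gamma_1)\cap\text{Stab}(\beta))$ to such an orbit; the conditions are well defined on orbits precisely because $\text{Stab}(\gamma_1) \subseteq \text{Mod}_{g,n}$ and $i(\gamma_1,\cdot)$ is $\text{Stab}(\gamma_1)$-invariant. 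This yields the stated identity. I do not expect a genuine obstacle: the entire argument is bookkeeping, and the only points that require care are the scale-invariance of the filling condition, of the intersection-number inequality (after rescaling the bound by $1/L$), and of the stabilizers, together with the saturation of $\widehat{B}(\gamma_1)$, which is what lets one evaluate the pushforward measure on $\widehat{B}(\gamma_1)$ by simply restricting the index set.
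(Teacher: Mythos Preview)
Your proof is correct and follows essentially the same approach as the paper: both apply the local pushforward formula (\ref{local_pushforward_atomic}) to the atomic measure $\mu_{\gamma_2}^L|_{\mathcal{ML}_{g,n}(\gamma_1)}$, evaluate on $\widehat{B}(\gamma_1)$, and identify the result with (\ref{simplified_sum_2}). You spell out more carefully the scale-invariance points (filling condition, stabilizers, and the translation of $[\tfrac{1}{L}\alpha]\in\widehat{B}(\gamma_1)$ into $i(\gamma_1,\alpha)\le L$) that the paper's proof leaves implicit, but the substance is the same.
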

\vspace{+0.2cm}

\begin{proof}
	Following (\ref{local_pushforward_atomic}) and (\ref{simplified_sum_2}) we have
	\begin{align*}
	\widehat {\mu}_{\gamma_2}^L(\widehat{B}(\gamma_1)) &= \frac{1}{L^{6g-6+2n}}  \sum_{\substack{[\beta]\in \mathcal{ML}_{g,n}(\gamma_1)/\text{Stab}(\gamma_1), \\ \beta \in \text{Mod}_{g,n} \cdot \gamma_2}} \frac{1}{\# (\text{Stab}(\gamma_1) \cap \text{Stab}(\beta))} \cdot \delta_{[\beta]}(\widehat{B}(\gamma_1))\\
	&= \frac{1}{L^{6g-6+2n}} \sum_{\substack{[\beta]\in \mathcal{ML}_{g,n}(\gamma_1)/\text{Stab}(\gamma_1), \\ \beta \in \text{Mod}_{g,n} \cdot \gamma_2, \\ i(\gamma_1,\beta) \leq L }}  \frac{1}{\# (\text{Stab}(\gamma_1) \cap \text{Stab}(\beta))}\\
	&= \frac{s(\gamma_1,\gamma_2,L)}{L^{6g-6+2n}}. \qedhere
	\end{align*}
\end{proof}
$ $\vspace{+0.15cm}

From Proposition \ref{measure_relates_to_count} it follows that to prove Theorem \ref{counting_quadratic_differentials_intro} it is enough to prove the following result.\\

\begin{proposition}
	\label{almost_main_theorem}
	For any pair of integral multi-curves $\gamma_1$ and $\gamma_2$ on $S_{g,n}$,
	\[
	\widehat{\mu}_{\gamma_2}^L(\widehat{B}(\gamma_1)) \to \frac{c(\gamma_2)}{b_{g,n}} \cdot \widehat{\mu}_{\text{Thu}}(\widehat{B}(\gamma_1)).
	\]
\end{proposition}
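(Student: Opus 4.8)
The plan is to upgrade the weak convergence $\widehat{\mu}_{\gamma_2}^L \to \frac{c(\gamma_2)}{b_{g,n}}\widehat{\mu}_{\text{Thu}}$ of Proposition \ref{pushforward_count_measure_convergence} to convergence of the measures of the specific subset $\widehat{B}(\gamma_1)$ of $\mathcal{ML}_{g,n}(\gamma_1)/\text{Stab}(\gamma_1)$. Since $\widehat{B}(\gamma_1)$ is neither open nor precompact, weak convergence alone is not enough, and two extra facts are needed: that the limiting measure gives no mass to the topological boundary $\partial \widehat{B}(\gamma_1)$, and a no-escape-of-mass estimate showing that the counting measures $\widehat{\mu}_{\gamma_2}^L$ do not concentrate near infinity inside $\widehat{B}(\gamma_1)$. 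The second fact is the main obstacle; the rest is a routine Portmanteau argument.

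For the boundary, I would note that $i(\gamma_1,\cdot)$ is continuous on $\mathcal{ML}_{g,n}(\gamma_1)$ and homogeneous of degree one, so $\widehat{B}(\gamma_1)$ is closed with interior $\{i(\gamma_1,\cdot) < 1\}/\text{Stab}(\gamma_1)$ and topological boundary $\partial\widehat{B}(\gamma_1) = \{i(\gamma_1,\cdot) = 1\}/\text{Stab}(\gamma_1)$. Granting that $\widehat{\mu}_{\text{Thu}}(\widehat{B}(\gamma_1)) < \infty$ (which the no-escape argument below produces as a byproduct), the scaling property of the Thurston measure gives $\widehat{\mu}_{\text{Thu}}(\{i(\gamma_1,\cdot) \le t\}/\text{Stab}(\gamma_1)) = t^{6g-6+2n}\widehat{\mu}_{\text{Thu}}(\widehat{B}(\gamma_1))$, a continuous function of $t$, and hence $\widehat{\mu}_{\text{Thu}}(\partial\widehat{B}(\gamma_1)) = 0$.

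For the no-escape estimate I would use Theorem \ref{hubbard_masur_map} to transport everything to $\text{Re}^{-1}(\gamma_1)$: the Hubbard-Masur map restricts to a $\text{Stab}(\gamma_1)$-equivariant homeomorphism $\mathcal{ML}_{g,n}(\gamma_1) \to \text{Re}^{-1}(\gamma_1)$, $\mu \mapsto h^{-1}(\gamma_1,\mu)$, carrying $i(\gamma_1,\cdot)$ to the area function and $B(\gamma_1)$ to $\{\text{Area}\le 1\}$. Since every $\beta \in \text{Mod}_{g,n}\cdot\gamma_2$ is in particular an integer point of $\mathcal{ML}_{g,n}$, this homeomorphism, being equivariant, identifies the local pushforward $\widehat{\mu}_{\gamma_2}^L$ with a submeasure of the local pushforward of the counting measure $m_{\text{Re}^{-1}(\gamma_1)}^L$ from (\ref{counting_measure_re}) under the quotient by $\text{Stab}(\gamma_1)$. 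Thus it suffices to find, for each $\varepsilon > 0$, a compact subset $\widehat{K}$ whose complement in $\{\text{Area}\le 1\}/\text{Stab}(\gamma_1)$ carries mass less than $\varepsilon$ for the full counting measures, uniformly for all large $L$. Working in real period coordinates on each connected component of each stratum of $\text{Re}^{-1}(\gamma_1)$ — where $\text{Re}^{-1}(\gamma_1)$ is an open subset of a Euclidean space, $m_{\text{Re}^{-1}(\gamma_1)}^L$ is the renormalized counting measure over $\tfrac1L \mathbf{Z}^{6g-6+2n}$, and the area function is piecewise linear and homogeneous of degree one — this reduces to a uniform Riemann-sum bound: on a bounded measurable set the $\tfrac1L\mathbf{Z}^{d}$-lattice count is $L^{d}$ times its Lebesgue measure plus $o(L^d)$, uniformly over translates in a fixed bounded region. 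The essential point, and the reason the $\text{Stab}(\gamma_1)$-quotient cannot be dispensed with, is that on a fundamental domain for $\text{Stab}(\gamma_1)$ the region $\{\text{Area}\le 1\}$ has finite Lebesgue measure; one then covers the quotient by countably many period-coordinate charts whose contributions are summable and adds up the local estimates, obtaining simultaneously both the no-escape bound and the finiteness $\widehat{\mu}_{\text{Thu}}(\widehat{B}(\gamma_1)) < \infty$. Making the cover, and in particular the uniformity in the cusps of the quotient, precise is where the real work of the proposition lies.

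Finally I would assemble the two facts. For the lower bound, approximating $\{i(\gamma_1,\cdot)<1\}/\text{Stab}(\gamma_1)$ from inside by precompact open sets, Portmanteau for the vague topology together with $\widehat{\mu}_{\text{Thu}}(\partial\widehat{B}(\gamma_1)) = 0$ and inner regularity of $\widehat{\mu}_{\text{Thu}}$ give $\liminf_{L\to\infty}\widehat{\mu}_{\gamma_2}^L(\widehat{B}(\gamma_1)) \ge \frac{c(\gamma_2)}{b_{g,n}}\widehat{\mu}_{\text{Thu}}(\widehat{B}(\gamma_1))$. For the upper bound, given $\varepsilon > 0$ pick the compact $\widehat{K}$ from the no-escape estimate; then for $L$ large $\widehat{\mu}_{\gamma_2}^L(\widehat{B}(\gamma_1)) \le \widehat{\mu}_{\gamma_2}^L(\overline{\widehat{B}(\gamma_1)}\cap \widehat{K}) + \varepsilon$, and applying Portmanteau to the compact set $\overline{\widehat{B}(\gamma_1)}\cap\widehat{K}$ together with $\widehat{\mu}_{\text{Thu}}(\partial\widehat{B}(\gamma_1)) = 0$ yields $\limsup_{L\to\infty}\widehat{\mu}_{\gamma_2}^L(\widehat{B}(\gamma_1)) \le \frac{c(\gamma_2)}{b_{g,n}}\widehat{\mu}_{\text{Thu}}(\widehat{B}(\gamma_1)) + \varepsilon$. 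Letting $\varepsilon \to 0$ finishes the proof.
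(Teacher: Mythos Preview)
Your overall architecture---Portmanteau on compact pieces plus a no-escape-of-mass estimate, after checking $\widehat{\mu}_{\text{Thu}}(\partial\widehat{B}(\gamma_1))=0$---is exactly the paper's. The boundary argument and the final $\varepsilon/3$ assembly are fine and match the paper's proof of Proposition~\ref{almost_main_theorem} from Proposition~\ref{no_escape_of_mass}.

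The gap is in the no-escape step itself, which you correctly flag as ``where the real work lies'' but do not carry out. Your proposed mechanism---cover $\text{Re}^{-1}(\gamma_1)/\text{Stab}(\gamma_1)$ by countably many real period charts and argue summability of the chart contributions---is not what the paper does, and making it work would require exactly the uniformity-in-the-cusps input you leave unspecified. The paper avoids this entirely by a further reduction you are missing: it passes from $\text{Re}^{-1}(\gamma_1)/\text{Stab}(\gamma_1)$ to $\mathbf{Re}^{-1}([\gamma_1])\subseteq Q\mathcal{M}_{g,n}$ via Lemma~\ref{no_marking_lemma}, so that cut-and-paste operations on polygon representations are legitimate. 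Then Lemma~\ref{finitely_many_charts} shows that $\mathbf{Re}^{-1}([\gamma_1])$ is covered by the images of \emph{finitely many} real period charts, each coming from a horizontal-parallelogram presentation with singularities on the boundary. In any such chart, cut-and-paste lets one restrict the real parts of the non-horizontal saddle connections to $[-1,1]$, and the area condition then bounds the horizontal saddle-connection lengths by $1$; so the relevant region is \emph{bounded} in Euclidean coordinates. The compact exhaustion $K_\epsilon$ is given explicitly: area $\le 1$, principal stratum, all horizontal saddle connections of length $\ge\epsilon$. The complement $\widehat{E}(\gamma_1)\setminus K_\epsilon$ is then a bounded set of small Lebesgue measure in each of finitely many charts, and a single lattice-count in a bounded region handles both the limit measure and the counting measures uniformly in $L$. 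No infinite sum or cusp uniformity is needed.
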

$ $\vspace{+0.2cm}

Let us finish the proof of Theorem \ref{counting_quadratic_differentials_intro} assuming Proposition \ref{almost_main_theorem} is true.\\

\begin{proof}[Proof of Theorem \ref{counting_quadratic_differentials_intro}]
	It follows from Propositions \ref{measure_relates_to_count} and \ref{almost_main_theorem} that
	\begin{equation}
	\label{almost_finished_proof}
	\frac{s(\gamma_1,\gamma_2,L)}{L^{6g-6+2n}} \to \frac{c(\gamma_2)}{b_{g,n}} \cdot \widehat{\mu}_{\text{Thu}}(\widehat{B}(\gamma_1))
	\end{equation}
	as $L \to \infty$. As $s(\gamma_1,\gamma_2,L) = s(\gamma_2,\gamma_1,L)$, we also have that
	\[
	\frac{s(\gamma_1,\gamma_2,L)}{L^{6g-6+2n}} \to \frac{c(\gamma_1)}{b_{g,n}} \cdot \widehat{\mu}_{\text{Thu}}(\widehat{B}(\gamma_2))
	\] 
	as $L \to \infty$. We deduce
	\[
	\frac{\widehat{\mu}_{\text{Thu}}(\widehat{B}(\gamma_1))}{c(\gamma_1)} = \frac{\widehat{\mu}_{\text{Thu}}(\widehat{B}(\gamma_2))}{c(\gamma_2)}.
	\] 
	As this holds for all integral multi-curves $\gamma_1$ and $\gamma_2$ on $S_{g,n}$, it follows that $r_{g,n} := \frac{\widehat{\mu}_{\text{Thu}}(\widehat{B}(\gamma))}{ c(\gamma)}$ is a constant depending only on $g$ and $n$ and not on the integral multi-curve $\gamma$. Explicit computations when $\gamma$ is a pair of pants decomposition of $S_{g,n}$ show that $r_{g,n} = \frac{1}{2^{2g-3+n}}$. The details of such computations are presented in Section 4. Theorem \ref{counting_quadratic_differentials_intro} then follows from (\ref{almost_finished_proof}).
\end{proof}
$ $ \vspace{+0.15cm}

It remains to prove Proposition \ref{almost_main_theorem}. Using the scaling properties of the Thurston measure one can check that $\widehat{\mu}_{\text{Thu}}(\partial \widehat{B}(\gamma_1)) = 0$. Yet we cannot directly apply Portmanteau's Theorem, see for instance Proposition 1.3.8 in \cite{Mar16}, to conclude from Proposition \ref{pushforward_count_measure_convergence} that $\widehat{\mu}_{\gamma_2}^L(\widehat{B}(\gamma_1)) \to \frac{c(\gamma_2)}{b_{g,n}} \cdot \widehat{\mu}_{\text{Thu}}(\widehat{B}(\gamma_1))$ as $L \to \infty$ because $\widehat{B}(\gamma_1) \subseteq \mathcal{ML}_{g,n}(\gamma_1)/ \text{Stab}(\gamma_1)$ is not compact. To prove such convergence we show 
the following no escape of mass property holds.\\

\begin{proposition}
	\label{no_escape_of_mass}
	For every $\epsilon > 0$ there exists a compact subset $K_\epsilon \subseteq \widehat{B}(\gamma_1)$ with the following properties:
	\begin{enumerate}
		\item $\frac{c(\gamma_2)}{b_{g,n}} \cdot \widehat{\mu}_{\text{Thu}}(\partial K_\epsilon) = 0$.
		\item $\frac{c(\gamma_2)}{b_{g,n}} \cdot \widehat{\mu}_{\text{Thu}}(\widehat{B}(\gamma_1)\backslash K_\epsilon) < \epsilon$.
		\item $\widehat{\mu}_{\gamma_2}^L(\widehat{B}(\gamma_1) \backslash K_\epsilon) < \epsilon$ for all big enough $L > 0$.\\
	\end{enumerate}
\end{proposition}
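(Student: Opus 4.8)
The plan is to build the compact sets $K_\epsilon$ inside $\widehat B(\gamma_1)$ by controlling the two ways mass can escape: either a measured lamination $\mu$ degenerates by having a short curve (its projective class runs off to a noncompact end of $\mathcal{ML}_{g,n}$ at bounded intersection with $\gamma_1$), or the Teichm\"uller point $h_{\gamma_1}(\mu) \in \mathcal{T}_{g,n}$ leaves every compact subset of $\mathcal{M}_{g,n}$. The key mechanism is the Hubbard–Masur slice homeomorphism $h_{\gamma_1}\colon \mathcal{ML}_{g,n}(\gamma_1)\to \mathcal{T}_{g,n}$ from Theorem~\ref{hubbard_masur_slice}, which is $\mathrm{Stab}(\gamma_1)$-equivariant and hence descends to a homeomorphism $\widehat h_{\gamma_1}\colon \mathcal{ML}_{g,n}(\gamma_1)/\mathrm{Stab}(\gamma_1)\to \mathcal{M}_{g,n}$. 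Under this identification, $\widehat B(\gamma_1)$ is the region in $\mathcal{M}_{g,n}$ where the quadratic differential with real foliation $\gamma_1$ and imaginary foliation the given lamination has area $\le 1$; Mumford-type compactness then says that for small $\delta>0$ the set $\mathcal{M}_{g,n}^{\ge\delta}$ of hyperbolic surfaces with systole $\ge\delta$ is compact, and one takes $K_\epsilon := \widehat B(\gamma_1)\cap \widehat h_{\gamma_1}^{-1}(\mathcal{M}_{g,n}^{\ge\delta})$ for $\delta = \delta(\epsilon)$ chosen below. Property~(1) holds because $\partial K_\epsilon$ is contained in the union of $\partial \widehat B(\gamma_1)$ (already known to be $\widehat\mu_{\mathrm{Thu}}$-null) and the preimage of the systole-$=\delta$ locus, which is a positive-codimension subset in period coordinates and hence Lebesgue-null, so $\widehat\mu_{\mathrm{Thu}}$-null.

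For property~(2), I would argue on the cover: $\mu_{\mathrm{Thu}}\big(B(\gamma_1)\setminus B(\gamma_1)^{\ge\delta}\big)\to 0$ as $\delta\to 0$, where $B(\gamma_1)^{\ge\delta}$ is the part mapping into $\mathcal{M}_{g,n}^{\ge\delta}$. This is where Theorem~\ref{hubbard_masur_map} and period coordinates enter: on $\mathrm{Re}^{-1}(\gamma_1)\subseteq Q\mathcal{T}_{g,n}$, real period coordinates identify the locus with imaginary foliation of bounded intersection with $\gamma_1$ with a bounded region in a Euclidean space, and the Thurston measure on $\mathcal{ML}_{g,n}(\gamma_1)$ pulled through the Hubbard–Masur map is (a constant multiple of) Lebesgue measure on that region; the short-systole locus is a neighborhood of a finite union of proper rational subspaces (the walls where a fixed curve has small extremal length, detected by the Jenkins–Strebel/period coordinate description), so its Lebesgue measure tends to $0$ with $\delta$. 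Since $\widehat\mu_{\mathrm{Thu}}$ is the local pushforward of $\mu_{\mathrm{Thu}}|_{\mathcal{ML}_{g,n}(\gamma_1)}$ and local pushforward only shrinks total mass, $\widehat\mu_{\mathrm{Thu}}(\widehat B(\gamma_1)\setminus K_\epsilon)\le \mu_{\mathrm{Thu}}(B(\gamma_1)\setminus B(\gamma_1)^{\ge\delta})$, which we make $< \epsilon b_{g,n}/c(\gamma_2)$ by taking $\delta$ small.

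Property~(3) is the genuinely new input and the main obstacle. The point is that $\widehat\mu_{\gamma_2}^L(\widehat B(\gamma_1)\setminus K_\epsilon)$ counts (with automorphism weights) curves $\beta \in \mathrm{Mod}_{g,n}\cdot\gamma_2$ with $i(\gamma_1,\beta)\le L$ whose Hubbard–Masur image has systole $<\delta$, renormalized by $L^{-(6g-6+2n)}$. Equivalently, via Theorem~\ref{hubbard_masur_map}, these correspond to square-tiled surfaces of area $\le L$ with horizontal core curve $\gamma_1$ and a short saddle connection or short core curve; the count of such "thin" square-tiled surfaces is of strictly lower order, or more precisely, of the expected order $L^{6g-6+2n}$ but with a coefficient that is $O(\delta)$ (or vanishes as $\delta\to 0$) uniformly in $L$. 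I would prove this by the same Mirzakhani measure-convergence machinery applied to the \emph{thin part}: the thin locus $B(\gamma_1)\setminus B(\gamma_1)^{\ge\delta}$ has a $\mu_{\mathrm{Thu}}$-null boundary (property (1) again) and is a bounded set, and — crucially — its $\mathrm{Stab}(\gamma_1)$-translates can be arranged into a locally finite well-covered family, so Portmanteau \emph{does} apply on each piece; summing and using property~(2) gives $\limsup_L \widehat\mu_{\gamma_2}^L(\widehat B(\gamma_1)\setminus K_\epsilon)\le \tfrac{c(\gamma_2)}{b_{g,n}}\widehat\mu_{\mathrm{Thu}}(\widehat B(\gamma_1)\setminus K_\epsilon)<\epsilon$. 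The subtlety I expect to fight with is that $\widehat B(\gamma_1)\setminus K_\epsilon$ need not be relatively compact, so one cannot invoke Portmanteau globally; the resolution is to exhaust $\widehat B(\gamma_1)$ by compact well-covered pieces, estimate each by Proposition~\ref{pushforward_count_measure_convergence}, and absorb the tail using the finiteness of $\widehat\mu_{\mathrm{Thu}}(\widehat B(\gamma_1))$ established in properties~(1)–(2).
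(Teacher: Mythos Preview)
Your proposal has a genuine gap in property~(3), and a related misconception affecting property~(2).

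For property~(3) you try to bound $\widehat{\mu}_{\gamma_2}^L(\widehat B(\gamma_1)\setminus K_\epsilon)$ by applying Mirzakhani's weak convergence and Portmanteau to the thin part, then ``absorb the tail using the finiteness of $\widehat\mu_{\mathrm{Thu}}(\widehat B(\gamma_1))$.'' This is circular: the thin part $\widehat B(\gamma_1)\setminus K_\epsilon$ is precisely the non-compact region on which Portmanteau does not apply, and knowing only that $\widehat\mu_{\mathrm{Thu}}$ is small there gives no uniform-in-$L$ control of $\widehat\mu_{\gamma_2}^L$ there. Any exhaustion argument leaves an unbounded remainder on which you have no a~priori bound for $\widehat\mu_{\gamma_2}^L$; that bound is exactly what you are trying to prove. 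The paper avoids this completely with a one-line domination you are missing: since $\mathrm{Mod}_{g,n}\cdot\gamma_2\subseteq \mathcal{ML}_{g,n}(\mathbf Z)$, one has $\widehat\mu_{\gamma_2}^L\le \widehat\mu^L$, the local pushforward of the \emph{full} integer-point counting measure. This reduces no escape of mass to a statement about $\widehat\mu^L$, which is then an elementary Euclidean lattice count in real period coordinates, with no appeal to Mirzakhani's theorem at all.

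A second issue underlies your argument for~(2): you assert that ``real period coordinates identify the locus with imaginary foliation of bounded intersection with $\gamma_1$ with a bounded region in a Euclidean space,'' and that local pushforward ``only shrinks total mass,'' yielding $\widehat\mu_{\mathrm{Thu}}(\widehat B(\gamma_1)\setminus K_\epsilon)\le \mu_{\mathrm{Thu}}(B(\gamma_1)\setminus B(\gamma_1)^{\ge\delta})$. Both claims are false. The set $B(\gamma_1)$ upstairs is \emph{not} bounded in period coordinates: the twist parameters (real parts of the non-horizontal saddle connections) are unbounded, and correspondingly $\mu_{\mathrm{Thu}}(B(\gamma_1))=\infty$, so your inequality is vacuous. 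The paper handles this by passing, via Lemma~\ref{no_marking_lemma}, from $\mathrm{Re}^{-1}(\gamma_1)/\mathrm{Stab}(\gamma_1)$ to $\mathbf{Re}^{-1}([\gamma_1])\subseteq Q\mathcal M_{g,n}$, where cut-and-paste allows one to restrict to $|\mathrm{Re}(\rho_i)|\le 1$; only then does $\widehat E(\gamma_1)$ become bounded in finitely many real period charts (Lemma~\ref{finitely_many_charts}). The paper's $K_\epsilon$ is defined by requiring horizontal saddle connections to have length $\ge\epsilon$, a condition tailor-made for these coordinates, rather than via the hyperbolic systole. (Incidentally, $h_{\gamma_1}$ descends to a homeomorphism onto $\mathcal T_{g,n}/\mathrm{Stab}(\gamma_1)$, not onto $\mathcal M_{g,n}$; this does not kill compactness of your $K_\epsilon$, but it is worth noting.)
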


We refer to the situation described in Proposition \ref{no_escape_of_mass} as there being \textit{no escape of mass} in $\widehat{B}(\gamma_1)$ for the measures $\widehat{\mu}_{\gamma_2}^L \to \frac{c(\gamma_2)}{b_{g,n}} \cdot \widehat{\mu}_{\text{Thu}}$. Let us finish the proof of Proposition \ref{almost_main_theorem} assuming Proposition \ref{no_escape_of_mass} is true.\\

\begin{proof}[Proof of Proposition \ref{almost_main_theorem}]
	Fix $\epsilon > 0$. Let $K_\epsilon \subseteq \widehat{B}(\gamma_1)$ be a compact subset as in Proposition \ref{no_escape_of_mass}. As $K_\epsilon \subseteq \mathcal{ML}_{g,n}(\gamma_1)/\text{Stab}(\gamma_1)$ is compact with $\widehat{\mu}_{\text{Thu}}(\partial K_\epsilon) = 0$, Proposition \ref{pushforward_count_measure_convergence} and Portmanteau's theorem imply $\widehat{\mu}_{\gamma_2}^L(K_\epsilon) \to \frac{c(\gamma_2)}{b_{g,n}} \cdot \widehat{\mu}_{\text{Thu}}(K_\epsilon)$ as $L \to \infty$. Let $L > 0$ be big enough so that $|\frac{c(\gamma_2)}{b_{g,n}} \cdot \widehat{\mu}_{\text{Thu}}(K_\epsilon) - \widehat{\mu}_{\gamma_2}^L(K_\epsilon)| < \epsilon$ and $\widehat{\mu}_{\gamma_2}^L(\widehat{B}(\gamma_1) \backslash K_\epsilon) < \epsilon$. The triangle inequality yields
	\[
	\bigg\vert \textstyle\frac{c(\gamma_2)}{b_{g,n}} \cdot \widehat{\mu}_{\text{Thu}}(\widehat{B}(\gamma_1)) - \widehat{\mu}_{\gamma_2}^L(\widehat{B}(\gamma_1))\bigg\vert \leq 3 \cdot \epsilon.
	\]
	As $\epsilon > 0$ was arbitrary, this finishes the proof.
\end{proof}
$ $ \vspace{+0.15cm}

\textit{No escape of mass.} To prove Proposition \ref{no_escape_of_mass} we move back into the realm of quadratic differentials, where we use period coordinates to reduce our problem to a lattice counting argument in Euclidean space. Let us first reduce to a no escape of mass problem for simpler measures. Recall the definition of the counting measures $\mu^L$ on $\mathcal{ML}_{g,n}$ in (\ref{ML_counting_measure}). By definition $\mu^L \to \mu_{\text{Thu}}$ as $L \to \infty$. As restriction to open sets is a continuous operation, $\mu^L|_{\mathcal{ML}_{g,n}(\gamma_1)} \to \mu_{\text{Thu}}|_{\mathcal{ML}_{g,n}(\gamma_1)}$ as $L \to \infty$. Let $\widehat{\mu}^L$ be the local pushforward of the measure $\mu^L|_{\mathcal{ML}_{g,n}(\gamma_1)}$ under the quotient map $\mathcal{ML}_{g,n}(\gamma_1) \to \mathcal{ML}_{g,n}(\gamma_1)/\text{Stab}(\gamma_1)$. As taking local pushforward is a continuous operation, $\widehat{\mu}^L \to \widehat{\mu}_{\text{Thu}}$ as $L \to \infty$. As $\widehat{\mu}_{\gamma_2}^L \leq \widehat{\mu}^L$, it is enough for our purposes to prove there is no escape of mass in $\widehat{B}(\gamma_1)$ for the measures $\widehat{\mu}^L \to \widehat{\mu}_{\text{Thu}}$. \\

Recall the definition of the counting measures $m_{\text{Re}^{-1}(\gamma_1)}^L$ on $\text{Re}^{-1}(\gamma_1)$ in (\ref{counting_measure_re}). By definition $m_{\text{Re}^{-1}(\gamma_1)}^L \to m_{\text{Re}^{-1}(\gamma_1)}$ as $L \to \infty$. Let $\widehat{m}^L_{\text{Re}^{-1}(\gamma_1)}$ and $\widehat{m}_{\text{Re}^{-1}(\gamma_1)}$ be the local pushforwards of these measures under the quotient map $\text{Re}^{-1}(\gamma_1) \to \text{Re}^{-1}(\gamma_1)/\text{Stab}(\gamma_1)$. As taking local pushforward is a continuous operation, \linebreak $\widehat{m}^L_{\text{Re}^{-1}(\gamma_1)} \to \widehat{m}_{\text{Re}^{-1}(\gamma_1)}$ as $L \to \infty$. Consider the subsets
\begin{align*}
D(\gamma_1) &:= \{[(X,q,\phi)] \in \text{Re}^{-1}(\gamma_1)\ | \ \text{Area}([(X,q,\phi)]) \leq 1\} \subseteq \text{Re}^{-1}(\gamma_1), \\
\widehat{D}(\gamma_1) &:= D(\gamma_1)/\text{Stab}(\gamma_1) \subseteq \text{Re}^{-1}(\gamma_1)/\text{Stab}(\gamma_1).
\end{align*}
By Theorem \ref{hubbard_masur_map}, the inverse of the Hubbard-Masur map induces a $\text{Stab}(\gamma_1)$-\linebreak equivariant homeomorphism $\mathcal{ML}_{g,n}(\gamma_1) \to \text{Re}^{-1}(\gamma_1)$ sending geometric intersection number with $\gamma_1$ to area of quadratic differentials. In particular, such map sends $B(\gamma_1)$ to $D(\gamma_1)$. Moreover, by definition, such map sends $\mu^L|_{\mathcal{ML}_{g,n}(\gamma_1)}$ to \linebreak $m_{\text{Re}^{-1}(\gamma_1)}^L$ and  $\mu_{\text{Thu}}|_{\mathcal{ML}_{g,n}(\gamma_1)}$ to $m_{\text{Re}^{-1}(\gamma_1)}$. Our problem then translates to showing there is no escape of mass in $\widehat{D}(\gamma_1)$ for the measures $\widehat{m}^L_{\text{Re}^{-1}(\gamma_1)} \to \widehat{m}_{\text{Re}^{-1}(\gamma_1)}$.\\

To avoid marking issues we do one further translation, moving our problem into $Q\mathcal{M}_{g,n}$. For this purpose the following lemma is very useful.\\

\begin{lemma}
	\label{no_marking_lemma}
	The quotient map $Q\mathcal{T}_{g,n}/\text{Stab}(\gamma_1) \to Q\mathcal{M}_{g,n}$ restricts to a homeomorphism from  $\text{Re}^{-1}(\gamma_1)/\text{Stab}(\gamma_1)$ onto $\mathbf{Re}^{-1}([\gamma_1])$. \\
\end{lemma}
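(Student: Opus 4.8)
The plan is to analyze directly the map
\[
f \colon \text{Re}^{-1}(\gamma_1)/\text{Stab}(\gamma_1) \longrightarrow \mathbf{Re}^{-1}([\gamma_1]) \subseteq Q\mathcal{M}_{g,n}
\]
induced by the natural projection $q \colon Q\mathcal{T}_{g,n} \to Q\mathcal{M}_{g,n}$, showing in turn that it is well defined and continuous, that it is bijective, and that it is open; since $\mathbf{Re}^{-1}([\gamma_1])$ carries the subspace topology, this yields the stated homeomorphism. For the first point, the $\text{Mod}_{g,n}$-equivariance of $\text{Re}$ shows that $\text{Stab}(\gamma_1)$ preserves $\text{Re}^{-1}(\gamma_1)$, so $\text{Re}^{-1}(\gamma_1)/\text{Stab}(\gamma_1)$ makes sense and is naturally a subspace of $Q\mathcal{T}_{g,n}/\text{Stab}(\gamma_1)$; the composition $\text{Re}^{-1}(\gamma_1) \hookrightarrow Q\mathcal{T}_{g,n} \to Q\mathcal{M}_{g,n}$ is $\text{Stab}(\gamma_1)$-invariant with image contained in $\mathbf{Re}^{-1}([\gamma_1])$, so by the universal property of the quotient it factors through a continuous map $f$.

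Next I would establish bijectivity onto $\mathbf{Re}^{-1}([\gamma_1])$ using only the equivariance of $\text{Re}$. For surjectivity, given $[(X,q)] \in \mathbf{Re}^{-1}([\gamma_1])$, pick any marked representative $[(X,q,\phi)] \in Q\mathcal{T}_{g,n}$; then $\text{Re}([(X,q,\phi)]) = \psi_* \gamma_1$ for some $\psi \in \text{Mod}_{g,n}$, and $\psi^{-1} \cdot [(X,q,\phi)]$ is a point of $\text{Re}^{-1}(\gamma_1)$ lying over $[(X,q)]$. For injectivity, if $z_1, z_2 \in \text{Re}^{-1}(\gamma_1)$ satisfy $q(z_1) = q(z_2)$, write $z_2 = \psi \cdot z_1$ with $\psi \in \text{Mod}_{g,n}$; applying $\text{Re}$ gives $\gamma_1 = \text{Re}(z_2) = \psi_* \text{Re}(z_1) = \psi_* \gamma_1$, so $\psi \in \text{Stab}(\gamma_1)$ and $z_1, z_2$ have the same image in $\text{Re}^{-1}(\gamma_1)/\text{Stab}(\gamma_1)$.

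The part I expect to be the main obstacle is showing that $f$ is open, since a continuous bijection between these quotient spaces need not a priori be a homeomorphism. The idea is to exploit the structure of the saturation $q^{-1}(\mathbf{Re}^{-1}([\gamma_1])) = \text{Re}^{-1}(\text{Mod}_{g,n} \cdot \gamma_1)$. Because $\text{Mod}_{g,n} \cdot \gamma_1$ is a discrete closed subset of $\mathcal{ML}_{g,n}$ (it is a set of integer points of the piecewise integral linear structure) and $\text{Re}$ is continuous, this saturation is closed in $Q\mathcal{T}_{g,n}$ and splits as a disjoint union of clopen "sheets" $\text{Re}^{-1}(\alpha)$, one for each $\alpha \in \text{Mod}_{g,n} \cdot \gamma_1$, which $\text{Mod}_{g,n}$ permutes transitively, the sheet $\text{Re}^{-1}(\gamma_1)$ being stabilized precisely by $\text{Stab}(\gamma_1)$. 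Given an open, $\text{Stab}(\gamma_1)$-invariant set $\widetilde{W} \subseteq \text{Re}^{-1}(\gamma_1)$, its image $W$ in the quotient is open and $f(W) = q(\widetilde{W})$; the saturation $q^{-1}(q(\widetilde{W})) = \text{Mod}_{g,n} \cdot \widetilde{W}$ meets each sheet $\text{Re}^{-1}(\alpha) = \psi \cdot \text{Re}^{-1}(\gamma_1)$ in the single translate $\psi \cdot \widetilde{W}$ (the admissible $\psi$ differ by elements of $\text{Stab}(\gamma_1)$, which fix $\widetilde{W}$), which is open in that sheet; since the sheets are clopen, $\text{Mod}_{g,n} \cdot \widetilde{W}$ is open in the closed saturated subset $q^{-1}(\mathbf{Re}^{-1}([\gamma_1]))$ of $Q\mathcal{T}_{g,n}$. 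As the restriction of $q$ to this saturated subset is a quotient map onto $\mathbf{Re}^{-1}([\gamma_1])$, it follows that $f(W)$ is open in $\mathbf{Re}^{-1}([\gamma_1])$, which is what is needed. (The identical bookkeeping with "closed" in place of "open" shows $f$ is a closed map, which also yields the homeomorphism.) The only external inputs are the continuity and equivariance of $\text{Re}$ and the standard fact that integral multi-curves form a discrete closed subset of $\mathcal{ML}_{g,n}$; proper discontinuity of the mapping class group actions is not needed here, only for the measure-theoretic statements that invoke this lemma.
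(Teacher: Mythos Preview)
Your argument is correct and follows essentially the same route as the paper: continuity is immediate, surjectivity and injectivity come from the $\text{Mod}_{g,n}$-equivariance of $\text{Re}$ exactly as you wrote, matching the paper's injectivity computation almost verbatim. The only difference is one of detail: the paper simply declares the restricted map to be ``clearly continuous, open, and surjective'' and spends its effort on injectivity, whereas you (rightly) identify openness as the step that is not entirely formal and supply a clean argument for it via the discreteness of $\text{Mod}_{g,n}\cdot\gamma_1$ in $\mathcal{ML}_{g,n}$ and the resulting clopen sheet decomposition of $\text{Re}^{-1}(\text{Mod}_{g,n}\cdot\gamma_1)$. This is precisely what is needed to justify the paper's ``clearly open'', since the restriction of an open map to a non-saturated subspace is not automatically open; your use of the saturated set $q^{-1}(\mathbf{Re}^{-1}([\gamma_1]))$ and the fact that $\text{Re}^{-1}(\gamma_1)$ is clopen in it fills that gap. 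So: same proof, but yours is more carefully written on the one point the paper leaves to the reader.
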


\begin{proof}
	The restriction is clearly continuous, open, and surjective. It only remains to check it is injective. Suppose $[(X_1,q_1,\phi_1)], [(X_2,q_2,\phi_2)] \in \text{Re}^{-1}(\gamma_1)$ satisfy $[(X_1,q_1)] = [(X_2,q_2)]$ in $Q\mathcal{M}_{g,n}$. By definition, there exists a conformal diffeomorphism $I \colon X_1 \to X_2$ such that $I_* q_1 = q_2$. Let $[\varphi] := [\phi_2^{-1} \circ I \circ \phi_1] \in \text{Mod}_{g,n}$. The action of the mapping class $[\varphi]$ on $Q\mathcal{T}_{g,n}$ sends $[(X_1,q_1,\phi_1)]$ to $[(X_2,q_2,\phi_2)]$. Moreover, $[\varphi] \in \text{Stab}(\gamma_1)$. This proves the restriction is injective.
\end{proof}
$ $ \vspace{+0.15cm}

For every $L > 0$, let $\nu^L$ be the counting measure on $\text{Re}^{-1}(\text{Mod}_{g,n} \cdot \gamma_1)$ given by
\[
\nu^L := \frac{1}{L^{6g-6+2n}} \sum_{ \substack{[(X,q,\phi)] \in \text{Re}^{-1}(\text{Mod}_{g,n} \cdot \gamma_1), \\ \text{Im}([(X,q,\phi)]) \in \frac{1}{L} \cdot \mathcal{ML}_{g,n}(\mathbf{Z})}} \delta_{[(X,q,\phi)]}.
\]
Using real period coordinates one can check that the measures $\nu^L$ converge as $L \to \infty$ to a non-zero, locally finite measure $\nu$ on $\text{Re}^{-1}(\text{Mod}_{g,n} \cdot \gamma_1)$ which coincides with Lebesgue measure on real period coordinates. Let $\widehat{\nu}^L$ and $\widehat{\nu}$ be the local pushforwards of these measures under the quotient map $\text{Re}^{-1}(\text{Mod}_{g,n} \cdot \gamma_1) \to \mathbf{Re}^{-1}([\gamma_1])$. As taking local pushforward is a continuous operation, $\widehat{\nu}^L \to \widehat{\nu}$ as $L \to \infty$. Consider the subsets
\begin{align*}
E(\gamma_1) &:= \{[(X,q,\phi)] \in \text{Re}^{-1}(\text{Mod}_{g,n} \cdot \gamma_1)\ | \ \text{Area}([(X,q,\phi)]) \leq 1\}, \\
\widehat{E}(\gamma_1) &:= E(\gamma_1)/\text{Mod}_{g,n} \subseteq \mathbf{Re}^{-1}([\gamma_1]).
\end{align*}
By Lemma \ref{no_marking_lemma}, our problem translates to showing there is no escape of mass in $\widehat{E}(\gamma_1)$ for the measures $\widehat{\nu}^L \to \widehat{\nu}$, i.e. for every $\epsilon > 0$ we look for a compact subset $K_\epsilon \subseteq \widehat{E}(\gamma_1)$ with the following properties:
\begin{enumerate}
	\item $\widehat{\nu}(\partial K_\epsilon) = 0$.
	\item $\widehat{\nu}(\widehat{E}(\gamma_1)\backslash K_\epsilon) < \epsilon$.
	\item $\widehat{\nu}^L(\widehat{E}(\gamma_1) \backslash K_\epsilon) < \epsilon$ for all big enough $L > 0$.
\end{enumerate}
A nice feature of working in $\mathbf{Re}^{-1}([\gamma_1])$ is that we can do cut and paste operations on polygon representations without worrying about marking issues.\\

We now introduce our candidate $K_\epsilon$ sets. Let $K_\epsilon \subseteq \widehat{E}(\gamma_1)$ be the set of all quadratic differentials in the principal stratum of $\mathbf{Re}^{-1}(\gamma_1)$ having  area $\leq 1$ and whose horizontal saddle connections have length $\geq \epsilon$. To show the sets $K_\epsilon$ satisfy the desired properties we work in real period coordinates. To this end we begin by showing it is enough to consider finitely many real period coordinate charts. \\

\begin{lemma}
	\label{finitely_many_charts}
	The set $\mathbf{Re}^{-1}([\gamma_1])$ is covered by the image under the quotient map $\text{Re}^{-1}( \text{Mod}_{g,n} \cdot \gamma_1) \to \mathbf{Re}^{-1}([\gamma_1])$ of finitely many real period coordinate chart domains. \\
\end{lemma}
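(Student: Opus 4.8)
The plan is to exploit the fact that $\mathbf{Re}^{-1}([\gamma_1]) \subseteq Q\mathcal{M}_{g,n}$ sits inside finitely many strata, and that within each stratum the locus with fixed horizontal foliation is parametrized by real period coordinates on a space that is, after passing to the appropriate quotient, of finite topological complexity. First I would recall that $Q\mathcal{M}_{g,n}$ has only finitely many connected components of strata (the orders of zeros at unmarked points are constrained by $\sum a_i + \sum b_j = 4g-4$ with the $b_j \geq -1$, so there are finitely many partitions to consider, each with finitely many connected components by the classification of Kontsevich--Zorich and its extensions to meromorphic differentials). Hence it suffices to prove the covering statement one connected component of one stratum at a time, and within each such piece to produce a \emph{single} real period coordinate chart whose image covers the relevant portion of $\mathbf{Re}^{-1}([\gamma_1])$, or at least to show that finitely many suffice.

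Next I would use the description of $\text{Re}^{-1}(\gamma_1)$ via real period coordinates together with the Hubbard--Masur identification $\mathcal{ML}_{g,n}(\gamma_1) \xrightarrow{\sim} \text{Re}^{-1}(\gamma_1)$ from Theorem \ref{hubbard_masur_map}: on each stratum component, $\text{Re}^{-1}(\gamma_1)$ is homeomorphic to a convex open cone (or a finite union thereof) in a real vector space, the chart being given by the imaginary parts of the relative periods subject to the linear constraints coming from fixing $\textbf{Re}$. The point is that the quotient $\text{Re}^{-1}(\gamma_1)/\text{Stab}(\gamma_1)$ maps homeomorphically onto $\mathbf{Re}^{-1}([\gamma_1])$ by Lemma \ref{no_marking_lemma}, so covering the target by finitely many chart images is equivalent to covering a fundamental domain for the $\text{Stab}(\gamma_1)$-action. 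I would then argue that a fundamental domain for the action of $\text{Stab}(\gamma_1)$ on the cone $\text{Re}^{-1}(\gamma_1)$ — which is, combinatorially, the action of the stabilizer of a fixed cylinder decomposition on the associated twist/shear parameters — can be taken to meet only finitely many of the top-dimensional cells of the natural cell decomposition of $\text{Re}^{-1}(\gamma_1)$ by polygon combinatorics, because $\text{Stab}(\gamma_1)$ acts cocompactly enough on the locus of bounded-area differentials modulo scaling; more precisely, the number of combinatorial types of polygon representations realizing a given topological cylinder decomposition, up to the $\text{Stab}(\gamma_1)$-action, is finite.

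The main obstacle, I expect, is making the last step precise: one must rule out that infinitely many distinct chart domains are genuinely needed, i.e., that the cell structure on $\text{Re}^{-1}(\gamma_1)$ is not ``infinitely refined'' transverse to the $\text{Stab}(\gamma_1)$-orbits. The clean way to handle this is to pass to the quotient $Q\mathcal{M}_{g,n}$ directly: each connected component of each stratum of $Q\mathcal{M}_{g,n}$ is itself covered by finitely many period coordinate chart domains (this is a standard property of strata, which are manifolds/orbifolds of finite type admitting finite atlases adapted to the integer structure), and $\mathbf{Re}^{-1}([\gamma_1])$, being a closed subvariety cut out by the locally linear condition that the real foliation lie in the $\text{Mod}_{g,n}$-orbit of $\gamma_1$, is covered by the intersections of these finitely many charts with $\mathbf{Re}^{-1}([\gamma_1])$; restricting each such chart to its real periods gives a real period coordinate chart whose domain's image covers the corresponding piece of $\mathbf{Re}^{-1}([\gamma_1])$. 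Summing over the finitely many strata components then yields the claim. The only genuinely delicate point to verify carefully is that "finitely many period coordinate charts cover a stratum component of $Q\mathcal{M}_{g,n}$'', which follows from local finiteness of the chart atlas together with properness of the area function modulo the $\mathbf{R}_{>0}$-scaling action, so that the area-$\leq 1$ locus is relatively compact in the projectivization and meets only finitely many charts.
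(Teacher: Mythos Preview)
Your proposal has a genuine gap at its final ``clean'' step. You claim that each connected component of a stratum of $Q\mathcal{M}_{g,n}$ is covered by finitely many period coordinate charts, and justify this by saying the area-$\leq 1$ locus is relatively compact in the projectivization. But this is false: strata of quadratic differentials are not compact even after normalizing area, because saddle connections can become arbitrarily short. So local finiteness of the atlas plus ``compactness'' does not give you finitely many charts, and the earlier sketch via fundamental domains and ``cocompactly enough'' is equally unjustified for the same reason. Moreover, the lemma asks you to cover all of $\mathbf{Re}^{-1}([\gamma_1])$, not just the bounded-area part; your reduction to the bounded-area locus is never made precise.

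What you are missing is the structural fact specific to the hypothesis $\text{Re} = \gamma_1$: since $\gamma_1$ is an integral multi-curve, every $(X,q)$ with $\textbf{Re}([(X,q)]) \in [\gamma_1]$ is \emph{horizontally periodic} and decomposes into $r$ horizontal cylinders, where $r$ is the number of components of $\gamma_1$. This immediately gives a polygon representation by $r$ horizontal parallelograms with singularities on the top and bottom edges. The paper's proof is then a direct combinatorial count: the number of singularities is at most $4g-4+2n$, there are finitely many ways to distribute them along the $2r$ edges, and finitely many ways to glue the resulting saddle connections. Each such combinatorial type is a real period coordinate chart, and they cover $\mathbf{Re}^{-1}([\gamma_1])$ regardless of area. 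No compactness or cell-structure argument is needed; the finiteness is purely combinatorial and comes from the cylinder decomposition forced by $\gamma_1$ being a multi-curve.
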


\begin{proof}
	Let $r$ be the number of connected components of the topological multi-curve underlying the integral multi-curve $\gamma_1$. Any point $[(X,\Sigma,q)] \in \mathbf{Re}^{-1}([\gamma_1])$ admits a polygon representation $\mathcal{P}$ given by $r$ horizontal parallelograms with singularities on their vertices, singularities on their top and bottom edges, and whose non-horizontal edges are identified by translations without negation. See Figure \ref{example} for an example when $\gamma_1$ is a simple closed curve. Notice that, up to changing the length of the saddle connections on the boundary of the parallelograms, there are only finitely many polygon representations $\mathcal{P}$ of this kind. Indeed, the number of singularities of a non-zero quadratic differential on $S_{g,n}$ is bounded above by $4g-4+2n$, the number of ways one can place these singularities on the top and bottom edges of $r$ parallelograms is finite, and the number of ways one can identify the resulting saddle connections on the boundary of the parallelograms is finite. In other words, $\mathbf{Re}^{-1}([\gamma_1])$ is covered by the image under the quotient map $\text{Re}^{-1}(\text{Mod}_{g,n} \cdot \gamma_1) \to \mathbf{Re}^{-1}([\gamma_1])$ of the domain of finitely many real period coordinate charts. 
\end{proof}
$ $ \vspace{+0.15cm}

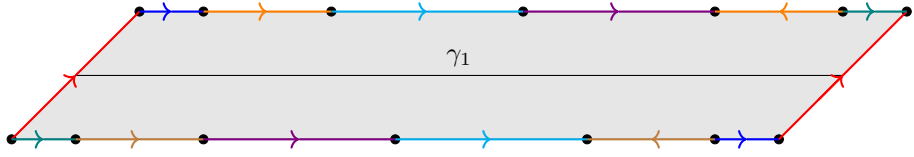
\begin{figure}
	\centering
	\vspace{+0.3cm}
	\begin{tikzpicture}[scale = 0.85]

\draw [fill = gray!20] (0,0) -- (2,2) -- (14,2) -- (12,0) -- (0,0);

\draw (1,1) -- (13,1) node[midway, above] {$\gamma_1$};

\draw [fill = black]  (0,0) circle [radius=2pt];
\draw [fill = black]  (2,2) circle [radius=2pt];
\draw [fill = black]  (14,2) circle [radius=2pt];
\draw [fill = black]  (12,0) circle [radius=2pt];

\draw [fill = black]  (3,2) circle [radius=2pt];
\draw [fill = black]  (5,2) circle [radius=2pt];
\draw [fill = black]  (8,2) circle [radius=2pt];
\draw [fill = black]  (11,2) circle [radius=2pt];
\draw [fill = black]  (13,2) circle [radius=2pt];

\draw [fill = black]  (1,0) circle [radius=2pt];
\draw [fill = black]  (3,0) circle [radius=2pt];
\draw [fill = black]  (6,0) circle [radius=2pt];
\draw [fill = black]  (9,0) circle [radius=2pt];
\draw [fill = black]  (11,0) circle [radius=2pt];

\draw[->,red, thick] (0,0) -- (1,1);
\draw[red, thick] (1,1) -- (2,2);
\draw[->,red, thick] (12,0) -- (13,1);
\draw[red, thick] (13,1,1) -- (14,2);

\draw[->,blue, thick] (2,2) -- (2.5,2);
\draw[blue, thick] (2.5,2) -- (3,2);
\draw[->,blue, thick] (11,0) -- (11.5,0);
\draw[blue, thick] (11.5,0) -- (12,0);

\draw[->,teal, thick] (0,0) -- (0.5,0);
\draw[teal, thick] (0.5,0) -- (1,0);
\draw[->,teal, thick] (13,2) -- (13.5,2);
\draw[teal, thick] (13.5,2) -- (14,2);

\draw[->,orange, thick] (3,2) -- (4,2);
\draw[orange, thick] (4,2) -- (5,2);
\draw[orange, thick] (11,2) -- (12,2);
\draw[<-, orange, thick] (12,2) -- (13,2);

\draw[->,brown, thick] (1,0) -- (2,0);
\draw[brown, thick] (2,0) -- (3,0);
\draw[brown, thick] (9,0) -- (10,0);
\draw[<-, brown, thick] (10,0) -- (11,0);

\draw[->,cyan, thick] (5,2) -- (6.5,2);
\draw[cyan, thick] (6.5,2) -- (8,2);
\draw[->,cyan, thick] (6,0) -- (7.5,0);
\draw[cyan, thick] (7.5,0) -- (9,0);

\draw[->,violet, thick] (8,2) -- (9.5,2);
\draw[violet, thick] (9.5,2) -- (11,2);
\draw[->,violet, thick] (3,0) -- (4.5,0);
\draw[violet, thick] (4.5,0) -- (6,0);

\end{tikzpicture}
	\vspace{+0.3cm}
	\caption{Example of a quadratic differential in the principal stratum of $\textbf{Re}^{-1}([\gamma_1]) \subseteq Q\mathcal{M}_{2,0}$ for a non-separating simple closed curve $\gamma_1$ in $S_{2,0}$.}
	\label{example}
\end{figure}

It is enough then to restrict our attention to a single one of the real period coordinate charts described in the proof of Lemma \ref{finitely_many_charts}. Let $\mathcal{P}$ be a polygon representation as above, representing quadratic differentials in an open subset $U$ of the principal stratum $\mathcal{S}$ of $\text{Re}^{-1}(\text{Mod}_{g,n} \cdot \gamma_1)$. To simplify vocabulary, we will make no distinction between $\mathcal{P}$ and the parallelograms in $\mathcal{P}$; in particular, when making reference to the boundary of $\mathcal{P}$ we will be refering to the boundary of the paralellograms in $\mathcal{P}$. Real period coordinates define a map $U \to W$ from the open subset $U \subseteq \mathcal{S}$ to an open subset $W \subseteq V$ of a vector subspace $V \subseteq \mathbf{R}^k$ of dimension $6g-6+2n$. This map assigns to every point in $U$ the value of the real part of the saddle connections on the boundary of $\mathcal{P}$, oriented counterclockwise. The subspace $V \subseteq \mathbf{R}^k$ describes the natural integral linear equations these real parts must satisfy to yield actual polygon representations. \\

Let $\rho_i$ denote the non-horizontal saddle connections on the boundary of $\mathcal{P}$. A priori, to cover all of $\mathbf{Re}^{-1}([\gamma_1])$ under the quotient map $\text{Re}^{-1}( \text{Mod}_{g,n} \cdot \gamma_1) \to \textbf{Re}^{-1}([\gamma_1])$, we may think that we need to consider all possible values $\text{Re}(\rho_i) \in \mathbf{R}$, but, as we are working on $\textbf{Re}^{-1}([\gamma_1])$, doing cut and paste operations on polygon representations yields the same quadratic differential, so we can actually restrict ourselves to $|\text{Re}(\rho_i)| \leq 1$. Taking this restriction into account, in the chart in consideration, the preimage under the quotient map of the set $\widehat{E}(\gamma_1)$ is a bounded set. Indeed, the condition $\text{Area}([(X,q)]) \leq 1$ is a linear inequality in real period coordinates which forces the absolute value of the real part of the  horizontal saddle connections on the boundary of $\mathcal{P}$ to be $\leq 1$. Similarly, the preimage of $K_\epsilon$ is described by the additional condition that the absoute value of the real part of the horizontal saddle connections on the boundary of $\mathcal{P}$ be $\geq \epsilon$. It follows that $K_\epsilon$ is compact and satisfies $\widehat{\nu}(\partial K_\epsilon) = 0$. Moreover, the preimage of $\widehat{E}(\gamma_1) \backslash K_\epsilon$ is a bounded set, whose boundary has Lebesgue measure zero, and whose Lebesgue measure is arbitrarily small for small values of $\epsilon>0$. A standard lattice counting argument in Euclidean space finishes the proof. For an example see Figure \ref{no_escape_of_mass_example}.\\

\begin{figure}
	\centering
	\begin{subfigure}[t]{0.45\textwidth}
		\centering
		\begin{tikzpicture}[scale = 0.85]

\draw [fill = gray!20] (0,0) -- (0,2) -- (4,2) -- (4,0) -- (0,0);

\draw [fill = black]  (0,0) circle [radius=2pt];
\draw [fill = black]  (0,2) circle [radius=2pt];
\draw [fill = black]  (2,2) circle [radius=2pt];
\draw [fill = black]  (4,2) circle [radius=2pt];
\draw [fill = black]  (4,0) circle [radius=2pt];
\draw [fill = black]  (2,0) circle [radius=2pt];

\draw[->,violet, thick] (0,0) -- (0,1);
\draw[violet, thick] (0,1) node[left] {$\rho$}  -- (0,2);
\draw[->,violet, thick] (4,0) -- (4,1);
\draw[violet, thick] (4,1) -- (4,2);

\draw[->,red, thick] (0,2) -- (1,2);
\draw[red, thick] (1,2)  node[above] {$\alpha$} -- (2,2);
\draw[red, thick] (2,2) -- (3,2);
\draw[<-, red, thick] (3,2) -- (4,2);

\draw[->,blue, thick] (0,0) -- (1,0);
\draw[blue, thick] (1,0) node[below] {$\beta$}  -- (2,0);
\draw[blue, thick] (2,0) -- (3,0);
\draw[<-, blue, thick] (3,0) -- (4,0);

\draw (0,1) -- (4,1) node[midway, above] {$\gamma_1$};

\end{tikzpicture} 
		\caption{Polygon representation.} \label{pol_rep_pillow}
	\end{subfigure}
	\hfill
	\begin{subfigure}[t]{0.45\textwidth}
		\centering
		\begin{tikzpicture}[scale = 0.85]

\draw [fill = gray!20] (0,0) -- (0,2) -- (4,2) -- (4,0) -- (0,0);

\draw [fill = blue!20] (0,0.5) -- (0,2) -- (4,2) -- (4,0.5) -- (0,0.5);

\draw (-0.5,0) -- (5,0) node[right] {$\text{Re}(\rho)$};

\draw (0,-0.5) -- (0,3) node[above] {$\text{Re}(\alpha)$};

\draw (4,2pt)--(4,-2pt) node[below] {$1$};

\draw (2pt,0.5)--(-2pt,0.5) node[left] {$\epsilon$}; 

\draw (2pt,2)--(-2pt,2) node[left] {$\frac{1}{2}$}; 

\end{tikzpicture}
		\caption{Real period coordinate chart.} \label{period_coord}
	\end{subfigure}

	\caption{No escape of mass property in the real period coordinate chart (\subref{period_coord}) associated to the polygon representation (\subref{pol_rep_pillow}), representing a flat pillowcase in the principal stratum of $\text{Re}^{-1}(\gamma_1) \subseteq Q\mathcal{T}_{4,0}$. The blue region covers $K_\epsilon$ and the gray region covers $\widehat{E}(\gamma_1) \backslash K_\epsilon$.}
	\label{no_escape_of_mass_example}
\end{figure}
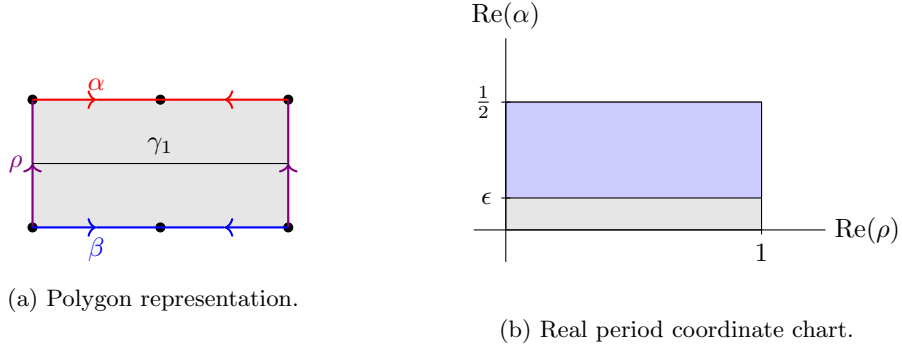

\textit{Proof of Theorem \ref{counting_quadratic_differentials_2_intro}.} Let $\gamma$ be an integral multi-curve on $S_{g,n}$. The same approach as above can be used to study the growth of the function $s(\gamma,*,L)$ defined in (\ref{counting_square_tiled_surfaces_2}) as $L \to \infty$. One only needs to replace the counting measures $\mu_{\gamma_2}^L$ in the arguments above by the counting measures $\mu^L$. As $\mu^L \to \mu_{\text{Thu}}$ when $L \to \infty$, Theorem \ref{counting_quadratic_differentials_2_intro} follows. \\

\section{Computing Thurston volumes}

For the rest of this section let $N := 3g-3+n$ and $\mathcal{P} := \{\gamma_1,\dots,\gamma_N\}$ be a pants decomposition of $S_{g,n}$. We compute $r_{g,n} := \frac{\widehat{\mu}_{\text{Thu}}(\widehat{B}(\mathcal{P}))}{c(\mathcal{P})}$. We refer to  $\widehat{\mu}_{\text{Thu}}(\widehat{B}(\mathcal{P}))$ as the \textit{Thurston volume} of $\mathcal{P}$.\\

\textit{Dehn-Thurston coordinates.} The following theorem, originally due to Dehn and rediscovered by Thurston in the context of measured foliations, gives an explicit parametrization of the set of integral multi-curves on $S_{g,n}$ in terms of their intersection numbers $m_i$ and their twisting numbers $t_i$ with respect to the curves $\gamma_i$ in $\mathcal{P}$. See \S 1.2 in \cite{PH92} for details.\\

\begin{theorem}
	\label{Dehn_Thurston_coordinates}
	There is a parametrization of $\mathcal{ML}_{g,n}(\mathbf{Z})$ by an additive semigroup $\Lambda \subseteq (\mathbf{Z}_{\geq 0} \times \mathbf{Z})^N$. The parameters $(m_i,t_i)_{i=1}^N \in (\mathbf{Z}_{\geq 0} \times \mathbf{Z})^N$ belong to $\Lambda$ if and only if the following conditions are satisfied:
	\begin{enumerate}
		\item For each $i =1,\dots, N$, if $m_i = 0$ then $t_i \geq 0$.
		\item For each complementary region $R$ of $S_{g,n} \backslash \mathcal{P}$, the parameters $m_i$ whose indices correspond to curves $\gamma_i$ of $\mathcal{P}$ bounding $R$ add up to an even number.\\
	\end{enumerate}
\end{theorem}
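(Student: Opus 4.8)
The plan is to recover the classical Dehn--Thurston parametrization by cutting $S_{g,n}$ along $\mathcal{P}$, classifying how an integral multi-curve can meet each complementary pair of pants, and then recording the data needed to reglue. Write $S_{g,n} \backslash \mathcal{P}$ as a disjoint union of $2g-2+n$ pairs of pants, and for each $\gamma_i$ fix a transverse orientation together with an embedded arc dual to $\gamma_i$ that will serve as a twisting reference. Given $\gamma \in \mathcal{ML}_{g,n}(\mathbf{Z})$, realized as an integer-weighted disjoint union of pairwise non-isotopic essential simple closed curves, I would first isotope $\gamma$ into \emph{minimal position} with respect to $\mathcal{P}$, so that the weighted number of intersection points of $\gamma$ with $\gamma_i$ equals $i(\gamma,\gamma_i)$ simultaneously for all $i$; such a simultaneous minimal position exists and is unique up to isotopy supported in a regular neighborhood of $\mathcal{P}$. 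Set $m_i := i(\gamma,\gamma_i)$.

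The first technical step is the pair-of-pants classification lemma. In a pair of pants $P$ with boundary components $\partial_1,\partial_2,\partial_3$, the intersection $\gamma \cap P$ in minimal position is, up to isotopy rel $\partial P$, a disjoint union of the six standard arc types --- three ``seam'' arcs joining distinct boundary components and three ``loop'' arcs cutting off a single boundary component --- possibly together with parallel copies of the $\partial_j$; moreover the multiplicities of the arc types are uniquely determined by the boundary intersection numbers $a_j := \#(\gamma \cap \partial_j)$ exactly when $a_1 + a_2 + a_3$ is even, in which case at most one loop type occurs. The parity constraint is forced by counting endpoints: $a_1 + a_2 + a_3$ equals twice the total number of arcs, and conversely every triple with even sum is realized. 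This is the source of condition (2), since the boundary curves of a complementary region $R$ of $S_{g,n}\backslash\mathcal{P}$ carry exactly the parameters $m_i$ appearing in the sum, with punctures of $S_{g,n}$ contributing $0$ and a curve adjacent to $R$ along two of its boundary circles contributing $2m_i$, which does not affect parity.

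Next I would extract the gluing data. Having fixed the normal form of $\gamma$ inside every pants, reassembling $\gamma$ across a curve $\gamma_i$ with $m_i > 0$ amounts to choosing a matching of the $m_i$ endpoints on the two sides; any two matchings differ by a power of the Dehn twist about $\gamma_i$, so the gluing is encoded by an integer $t_i \in \mathbf{Z}$ measured against the chosen dual arc, every value being realizable and producing pairwise non-isotopic results. When $m_i = 0$ no strand of $\gamma$ crosses $\gamma_i$, so there is no twisting; moreover, since the components of $\gamma$ are disjoint, a component isotopic to $\gamma_i$ can only occur in this case, and we set $t_i \geq 0$ to be the total weight of such parallel copies of the core curve. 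This forces condition (1). Let $\Lambda \subseteq (\mathbf{Z}_{\geq 0}\times\mathbf{Z})^N$ be the set of tuples satisfying (1) and (2), and let the parametrization send $\gamma$ to $(m_i,t_i)_{i=1}^N$.

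Finally I would verify that this is a well-defined bijection onto $\Lambda$ that is an isomorphism of additive semigroups. Well-definedness holds because the only ambiguity in the minimal position of $\gamma$ near $\mathcal{P}$ is a twist, which is absorbed into the definition of $t_i$. Injectivity follows by reversing the construction: the $m_i$ pin down the arc systems in every pants via the classification lemma, and the $t_i$ pin down the gluings and the multiplicities of parallel core curves, reconstructing $\gamma$. Surjectivity onto $\Lambda$ follows by building, for any tuple in $\Lambda$, the corresponding arc systems --- possible precisely because of the even-sum condition (2) --- gluing them with the prescribed twists, and inserting $t_i$ parallel core copies when $m_i = 0$, using (1). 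Additivity of the coordinates under disjoint union of multi-curves is immediate for the $m_i$, since geometric intersection number is additive over components of a multi-curve, and for the $t_i$, since twisting numbers and parallel core counts add; hence $\Lambda$ is closed under addition and the map is a semigroup isomorphism. I expect the main obstacle to be the careful setup of the twisting coordinate: arranging the reference choices so that $t_i$ is independent of the auxiliary isotopies, interpolating correctly between the Dehn-twist count when $m_i > 0$ and the count of parallel core curves when $m_i = 0$, and checking its additivity; the pair-of-pants classification, though standard, is the other technical heart of the argument. Full details can be found in \S1.2 of \cite{PH92}.
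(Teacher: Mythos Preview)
The paper does not give its own proof of this theorem; it is stated as a classical background result, with the reader referred to \S1.2 of \cite{PH92} for details. Your outline is precisely the standard argument found there---cutting along $\mathcal{P}$, classifying properly embedded essential arcs in each complementary pair of pants by their boundary intersection numbers (which forces the parity condition (2)), and then recording an integer twist parameter at each $\gamma_i$ that degenerates to a nonnegative core-curve multiplicity when $m_i=0$ (giving condition (1))---so there is nothing to compare, and your sketch is correct.
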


We refer to any parametrization as in Theorem \ref{Dehn_Thurston_coordinates} as a set of \textit{Dehn-Thurston coordinates} of $\mathcal{ML}_{g,n}(\mathbf{Z})$ adapted to $\mathcal{P}$ and to the additive semigroup $\Lambda\subseteq  (\mathbf{Z}_{\geq 0} \times \mathbf{Z})^N$ as the parameter space of such parametrization. For any set of Dehn-Thurston coordinates adapted to $\mathcal{P}$, the action of the full right Dehn twist along $\gamma_i$ on $\mathcal{ML}_{g,n}(\mathbf{Z})$ can be described in coordinates as $t_i \mapsto t_i + m_i$, leaving the other parameters constant. Dehn-Thurston coordinates will be extemely useful for computing the Thurston volume $\widehat{\mu}(\widehat{B}(\mathcal{P}))$.\\

\textit{Stabilizers of pants decompositions.} Let $\text{Stab}(\mathcal{P})$ be the stabilizer of $\mathcal{P}$ with respect to the $\text{Mod}_{g,n}$ action on $\mathcal{ML}_{g,n}$. It is a well known fact, see for instance \cite{Wol09a} and \cite{Wol09b}, that $\text{Stab}(\mathcal{P})$ is generated by:
\begin{enumerate}
	\item Full right Dehn twists along all curves $\gamma_i$ in $\mathcal{P}$.
	\item Half right Dehn twists along some curves $\gamma_j$ in $\mathcal{P}$.
	\item Finitely many finite order elements.\\
\end{enumerate} 

Let $[\phi] \in \text{Stab}(\mathcal{P})$ be a mapping class which does not fix every isotopy class of simple closed curves on $S_{g,n}$. For any set of Dehn-Thurston coordinates adapted to $\mathcal{P}$, one can check that the integral multi-curves stabilized by such mapping class correspond to an additive semigroup of positive codimension of the associated parameter space $\Lambda\subseteq  (\mathbf{Z}_{\geq 0} \times \mathbf{Z})^N$. \\

It may be the case that some mapping class $[\phi] \in \text{Mod}_{g,n}$ fixes every isotopy class of simple closed curves on $S_{g,n}$. The normal subgroup $K_{g,n} \lhd \text{Mod}_{g,n}$ of all such mapping classes has cardinality 
\begin{equation}
\label{epsilon_gn}
\epsilon_{g,n} := \left\lbrace
\begin{array}{ccl}
4 & \text{if} & (g,n) = (0,4), \\
2 & \text{if} & (g,n) \in \{(1,1),(1,2),(2,0)\}, \\ 
1 & \text{if} & (g,n) \notin  \{(0,4),(1,1),(1,2),(2,0)\}\\
\end{array} \right. .
\end{equation}
See 3.4 in \cite{FM11} for details.\\

It will be convenient for our purposes to consider the subgroup $\text{Stab}_*(\mathcal{P}) < \text{Stab}(\mathcal{P})$ generated by all full right Dehn twists along the curves $\gamma_i$ of $\mathcal{P}$. Notice that  $[\text{Stab}(\mathcal{P}):\text{Stab}_*(\mathcal{P})] < \infty$.\\

\textit{Thurston volumes of pair of pants decompositions.} We compute $\widehat{\mu}_{\text{Thu}}(\widehat{B}(\mathcal{P}))$ by reducing to a lattice counting problem on $\mathcal{ML}_{g,n}$ which we solve by considering Dehn-Thurston coordinates adapted to $\mathcal{P}$. It is convenient to work in the intermediate cover described in the following diagram:
\[
\begin{tikzcd}
\mathcal{ML}_{g,n}(\mathcal{P}) \arrow[dd] \arrow[dr] \\
& \mathcal{ML}_{g,n}(\mathcal{P})/\text{Stab}_*(\mathcal{P}) \arrow[ld, "p"]\\
\mathcal{ML}_{g,n}(\mathcal{P}) /\text{Stab}(P)
\end{tikzcd}.
\]
Let $\widetilde{\mu}_{\text{Thu}}$ be the local pushforward of the measure $\mu_{\text{Thu}}|_{\mathcal{ML}_{g,n}(\mathcal{P})}$ under the quotient map $\mathcal{ML}_{g,n}(\mathcal{P}) \to \mathcal{ML}_{g,n}(\mathcal{P})/\text{Stab}_*(\mathcal{P})$. The following proposition relates the measures $\widehat{\mu}_{\text{Thu}}$ and $\widetilde{\mu}_{\text{Thu}}$.\\

\begin{proposition}
	\label{relating pushforwards}
	Let $p_\# \widetilde{\mu}_{\text{Thu}}$ be the usual pushfoward of the measure $\widetilde{\mu}_{\text{Thu}}$ under the quotient map $p \colon \mathcal{ML}_{g,n}/\text{Stab}_*(P) \to \mathcal{ML}_{g,n}/\text{Stab}(P)$. Then
	\[
	p_\# \widetilde{\mu}_{\text{Thu}} = [\text{Stab}(\mathcal{P}) : \text{Stab}_*(\mathcal{P})] \cdot \widehat{\mu}_{\text{Thu}}.
	\]
\end{proposition}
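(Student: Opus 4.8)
The plan is to verify the claimed identity one well-covered open set at a time, reducing to the local structure of the quotient maps. Write $G := \text{Stab}(\mathcal{P})$, $G_* := \text{Stab}_*(\mathcal{P})$, $\mathcal{L} := \mathcal{ML}_{g,n}(\mathcal{P})$, and let $q_* \colon \mathcal{L} \to \mathcal{L}/G_*$, $q \colon \mathcal{L} \to \mathcal{L}/G$ be the quotient maps, so that $q = p \circ q_*$. Since $[G:G_*] < \infty$, the index is a well-defined positive integer; fix coset representatives $g_1,\dots,g_k$ for $G_*$ in $G$, where $k = [G:G_*]$. The action of $G$ on $\mathcal{L}$ is properly discontinuous by the lemma following Theorem \ref{hubbard_masur_slice} (applied with $\gamma_1$ replaced by $\mathcal{P}$), hence so is the action of the subgroup $G_*$; all the local pushforwards in the statement are therefore well-defined in the sense of Definition \ref{local_pushforward_measure}.

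First I would record the behavior of local pushforwards under composition of quotient maps, which is really the heart of the matter: if $H < G$ with $[G:H] < \infty$ and both act properly discontinuously, then $q_\# \mu = [G:H] \cdot$ (the local pushforward of $\mu$ along $q$) is \emph{false} in general; rather one has a relation weighted by stabilizer orders. So the correct statement to prove is precisely the proposition as stated, with $p_\#$ (the \emph{ordinary} pushforward along the finite quotient $p$) on the left. The key computation is local: pick $x \in \mathcal{L}$ with $G$-stabilizer $\Gamma_G := \text{Stab}_G(x)$ and $G_*$-stabilizer $\Gamma_* := \Gamma_G \cap G_*$, and choose a small $\Gamma_G$-invariant open neighborhood $U$ of $x$ with $gU \cap U = \emptyset$ for $g \in G \setminus \Gamma_G$. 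Then $q(U) = U/\Gamma_G$ is well covered downstairs, while upstream $q_*^{-1}(q_*(U)) = \bigsqcup_{i=1}^k g_i U$ and $q_*(U) = U/\Gamma_*$ is well covered in $\mathcal{L}/G_*$. By Definition \ref{local_pushforward_measure}, $(\widehat{\mu}_{\text{Thu}})|_{U/\Gamma_G} = \tfrac{1}{\#\Gamma_G}(q|_U)_\#(\mu_{\text{Thu}}|_U)$ and $(\widetilde{\mu}_{\text{Thu}})|_{U/\Gamma_*} = \tfrac{1}{\#\Gamma_*}(q_*|_U)_\#(\mu_{\text{Thu}}|_U)$.

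Next I would compute $p_\#\widetilde{\mu}_{\text{Thu}}$ on $U/\Gamma_G$ directly. The preimage $p^{-1}(U/\Gamma_G)$ inside $\mathcal{L}/G_*$ is the union of the sets $q_*(g_i U)$; since $\widetilde{\mu}_{\text{Thu}}$ is the local pushforward of a $G$-invariant (in particular $G_*$-invariant) measure and $\mu_{\text{Thu}}$ is genuinely $G$-invariant, the measure $\widetilde{\mu}_{\text{Thu}}$ is invariant under the residual $G/G_*$-action on $\mathcal{L}/G_*$, so each piece $q_*(g_i U)$ carries the same $\widetilde{\mu}_{\text{Thu}}$-mass as $q_*(U) = U/\Gamma_*$. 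The number of \emph{distinct} such pieces, counted correctly, is $k/[\Gamma_G : \Gamma_*]$, because the $G/G_*$-orbit of the point $q_*(x)$ has size $k/[\Gamma_G:\Gamma_*]$ (the stabilizer of $q_*(x)$ in $G/G_*$ is the image of $\Gamma_G$, which has order $[\Gamma_G:\Gamma_*]$). Pushing forward along $p$ then multiplies the mass of one sheet by this orbit count, giving, for a Borel set $A \subseteq U/\Gamma_G$,
\[
(p_\# \widetilde{\mu}_{\text{Thu}})(A) = \frac{k}{[\Gamma_G:\Gamma_*]} \cdot \widetilde{\mu}_{\text{Thu}}(\text{the sheet over } A) = \frac{k}{[\Gamma_G:\Gamma_*]} \cdot \frac{1}{\#\Gamma_*} \cdot \mu_{\text{Thu}}\bigl((q_*|_U)^{-1}(\cdot)\bigr),
\]
and since $[\Gamma_G:\Gamma_*]\cdot \#\Gamma_* = \#\Gamma_G$ while $(q_*|_U)$ and $(q|_U)$ have the same fibers up to the $\Gamma_G/\Gamma_*$-identification that $p$ undoes, this collapses to $k \cdot \tfrac{1}{\#\Gamma_G}(q|_U)_\#(\mu_{\text{Thu}}|_U) = k \cdot (\widehat{\mu}_{\text{Thu}})|_{U/\Gamma_G}$. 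Finally, I would note that well-covered sets of this form cover $\mathcal{L}/G$, and both $p_\#\widetilde{\mu}_{\text{Thu}}$ and $k\cdot\widehat{\mu}_{\text{Thu}}$ are locally finite Borel measures agreeing on them, so they are equal; this gives $p_\#\widetilde{\mu}_{\text{Thu}} = [\text{Stab}(\mathcal{P}):\text{Stab}_*(\mathcal{P})] \cdot \widehat{\mu}_{\text{Thu}}$.

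The main obstacle is bookkeeping the stabilizer orders correctly in the sheet-counting step: one must carefully distinguish between the index $[G:G_*]$, the size of the residual $G/G_*$-orbit of a point downstairs, and the order of the point-stabilizer $\Gamma_G$, and check that the factors $[\Gamma_G:\Gamma_*]$ cancel exactly so that only the global index $[G:G_*]$ survives. It is reassuring, and worth stating, that in the generic case $\Gamma_G = \Gamma_* = \{1\}$ (which holds on a full-measure set, since points with nontrivial stabilizer under a pants-decomposition stabilizer lie on the positive-codimension loci described just before the proposition), the argument is transparent: $p$ is then an honest $k$-to-$1$ covering over that locus and $p_\#\widetilde{\mu}_{\text{Thu}} = k \cdot \widehat{\mu}_{\text{Thu}}$ is immediate; the delicate verification above is only needed to confirm that the exceptional loci contribute nothing spurious, and indeed they have $\mu_{\text{Thu}}$-measure zero so the full-measure argument alone suffices once one knows both sides are absolutely continuous with respect to $q_\#(\mu_{\text{Thu}}$-class measures$)$.
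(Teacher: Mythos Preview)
Your argument is correct and follows essentially the same local approach as the paper: both verify the identity on well-covered open sets by counting sheets of $p$ and comparing the stabilizer weights in Definition~\ref{local_pushforward_measure}. The only difference is organizational: the paper first restricts to the full-measure, $\text{Stab}(\mathcal{P})$-invariant locus $\Omega_{g,n}(\mathcal{P})$ where every point has $G$-stabilizer exactly $K_{g,n}$ and trivial $G_*$-stabilizer (so $\#\Gamma_G=\epsilon_{g,n}$, $\#\Gamma_*=1$, and the sheet count is $[G:G_*]/\epsilon_{g,n}$), whereas you carry out the bookkeeping for arbitrary point-stabilizers and only remark at the end that the generic case suffices. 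One small point worth making explicit: your use of a ``residual $G/G_*$-action'' on $\mathcal{L}/G_*$ presupposes $\text{Stab}_*(\mathcal{P}) \lhd \text{Stab}(\mathcal{P})$; this is true, since elements of $\text{Stab}(\mathcal{P})$ permute the curves $\gamma_i$ and hence conjugate the Dehn-twist generators of $\text{Stab}_*(\mathcal{P})$ among themselves, but it deserves a sentence.
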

$ $

\begin{proof}
	Let $\Omega_{g,n}(\mathcal{P}) \subseteq \mathcal{ML}_{g,n}(\mathcal{P})$ be the subset of all $\lambda \in \mathcal{ML}_{g,n}(\mathcal{P})$ satisfying the following conditions:
	\begin{enumerate}
		\item $\text{Stab}(\lambda) \cap \text{Stab}(\mathcal{P}) = K_{g,n}$. 
		\item $\text{Stab}(\lambda) \cap \text{Stab}_*(\mathcal{P}) = \{1\}$.
	\end{enumerate}
	As $K_{g,n} \lhd \text{Mod}_{g,n}$, it follows that $\Omega_{g,n}(\mathcal{P})$ is $\text{Stab}(\mathcal{P})$-invariant. Using Thurston's parametrization of $\mathcal{ML}_{g,n}$, see for instance 8.3.9 in \cite{Mar16}, one can check that $\Omega_{g,n}(\mathcal{P})$ is an open, full measure subset of $\mathcal{ML}_{g,n}(\mathcal{P})$. In particular, it is enough for our purposes to work on $\Omega_{g,n}(\mathcal{P})$.\\

	As local pushforwards are defined locally, we can restrict our attention to a single well covered open set of the form $U/K_{g,n} \subseteq \Omega_{g,n}(\mathcal{P})/\text{Stab}(\mathcal{P})$. Let $A \subseteq U$ be an arbitrary measurable set. By definition,
	\[
	\widehat{\mu}(A/K_{g,n}) = \frac{1}{\epsilon_{g,n}} \cdot \mu(A).
	\]
	Notice $p^{-1}(U/K_{g,n})$ can be written as a disjoint union of $[\text{Stab}(\mathcal{P}) : \text{Stab}_*(\mathcal{P})]/\epsilon_{g,n}$ images under the quotient map  $\mathcal{ML}_{g,n}(\mathcal{P}) \to \mathcal{ML}_{g,n}(\mathcal{P})/\text{Stab}_*(\mathcal{P})$ of $\text{Stab}(\mathcal{P})$-translates of $U$. Each one of these images is a well covered open set of the form $W/\{1\} \subseteq \mathcal{ML}_{g,n}(\mathcal{P})/\text{Stab}_*(\mathcal{P})$ for some $\text{Stab}(\mathcal{P})$-translate $W$ of $U$. In particular,
	\[
	p_\# \widetilde{\mu}_{\text{Thu}} (A/K_{g,n}) = \frac{[\text{Stab}(\mathcal{P}) : \text{Stab}_*(\mathcal{P})]}{\epsilon_{g,n}} \cdot \mu(A).
	\]
	We deduce
	\[
	p_\# \widetilde{\mu}_{\text{Thu}} (A/K_{g,n}) = [\text{Stab}(\mathcal{P}) : \text{Stab}_*(\mathcal{P})] \cdot \widehat{\mu}(A/K_{g,n}).
	\]
	As $A \subseteq U$ was an arbitrary measurable set, this finishes the proof.
\end{proof}
$ $ \vspace{+0.15cm}

Consider the subset
\[
\widetilde{B}(\mathcal{P}) := B(\mathcal{P})/\text{Stab}_*(\mathcal{P}) \subseteq \mathcal{ML}_{g,n}(\mathcal{P})/\text{Stab}_*(\mathcal{P}).
\]
It follows directly from Proposition \ref{relating pushforwards} that
\[
\widetilde{\mu}_{\text{Thu}}(\widetilde{B}(\mathcal{P})) = [\text{Stab}(\mathcal{P}): \text{Stab}_*(\mathcal{P})] \cdot \widehat{\mu}_{\text{Thu}}(\widehat{B}(\mathcal{P})).
\]
\vspace{+0.1cm}

Recall the definition of the counting measures $\mu^L$ on $\mathcal{ML}_{g,n}$ in (\ref{ML_counting_measure}). Recall that by definition $\mu^L \to \mu_{\text{Thu}}$ as $L \to \infty$. As restriction to open sets is a continuous operation, $\mu^L|_{\mathcal{ML}_{g,n}(\mathcal{P})} \to \mu_{\text{Thu}}|_{\mathcal{ML}_{g,n}(\mathcal{P})}$ as $L \to \infty$. Let $\widetilde{\mu}^L$ be the local pushforward of the measure $\mu^L|_{\mathcal{ML}_{g,n}(\mathcal{P})}$ under the quotient map $\mathcal{ML}_{g,n}(\mathcal{P}) \to \mathcal{ML}_{g,n}(\mathcal{P})/\text{Stab}_*(\mathcal{P})$. As taking local pushforward is a continuous operation, $\widetilde{\mu}^L \to \widetilde{\mu}_{\text{Thu}}$ as $L \to \infty$. Following the same no escape of mass arguments as in the previous section, we deduce $\widetilde{\mu}^L(\widetilde{B}(\mathcal{P})) \to \widetilde{\mu}_{\text{Thu}}(\widetilde{B}(\mathcal{P}))$ as $L \to \infty$. \\

Let $\mathcal{ML}_{g,n}(\mathbf{Z},\mathcal{P}) := \mathcal{ML}_{g,n}(\mathbf{Z}) \cap \mathcal{ML}_{g,n}(\mathcal{P})$. Notice $\text{Stab}_*(\mathcal{P})\cap \text{Stab}(\alpha) = \{1\}$ for all $\alpha \in \mathcal{ML}_{g,n}(\mathbf{Z},\mathcal{P})$. From (\ref{local_pushforward_atomic}) it follows that
\[
\widetilde{\mu}^L := \frac{1}{L^{6g-6+2n}} \sum_{[\alpha] \in \mathcal{ML}_{g,n}(\mathbf{Z},\mathcal{P}) /\text{Stab}_*(\mathcal{P})} \delta_{\frac{1}{L} \cdot [\alpha]}.
\]
From this we deduce
\[
\widetilde{\mu}^L(\widetilde{B}(\mathcal{P})) = \frac{\# \{[\alpha] \in \mathcal{ML}_{g,n}(\mathbf{Z},\mathcal{P}) /\text{Stab}_*(\mathcal{P}) \ | \ i(\alpha,\mathcal{P}) \leq L\}}{L^{6g-6+2n}}.
\]
\vspace{+0.2cm}

We now wish to count the number of points in the set
\[
\widetilde{I}_L := \{[\alpha] \in \mathcal{ML}_{g,n}(\mathbf{Z},\mathcal{P}) /\text{Stab}_*(\mathcal{P}) \ | \ i(\alpha,\mathcal{P}) \leq L \}.
\]
Considering Dehn-Thurston coordinates adapted to $\mathcal{P}$ with parameter space $\Lambda \subseteq (\mathbf{R}_{\geq 0} \times \mathbf{R})^N$, this is the same as counting the number of points in the set
\[
I_L:= \left\lbrace
\begin{array}{c | l}
(m_i,t_i)_{i=1}^N \in \Lambda
& \ m_i > 0, \ \forall i=1,\dots,N,\\
& \ 0 \leq t_i < m_i,  \ \forall i=1,\dots,N,\\
& \ \sum_{i=1}^N m_i \leq L.\\
\end{array} \right\rbrace.
\]
It follows that
\[
\lim_{L \to \infty} \widetilde{\mu}^L(\widetilde{B}(\mathcal{P})) = \lim_{L \to \infty} \frac{\# I_L}{L^{6g-6+2n}}.
\]
\vspace{+0.2cm}

Notice that the additive semigroup $\Lambda \subseteq (\mathbf{Z}_{\geq 0} \times \mathbf{Z})^N$ has index $2^{2g-3+n}$. Indeed, there is one even condition imposed on $\Lambda$ for every complementary region of $S_{g,n} - \mathcal{P}$, of which there are $2g-2+n$ in total, and one of these conditions is redundant. Standard lattice counting arguments in Euclidean space show that
\[
\lim_{L \to \infty} \frac{\# I_L}{L^{6g-6+2n}} = \frac{\text{Leb}(A_1)}{2^{2g-3+n}},
\]
where $\text{Leb}(A_1)$ is the standard Lebesgue measure of the set $A_1 \subseteq (\mathbf{R}_{\geq 0} \times \mathbf{R})^N$ given by
\[
A_1:= \left\lbrace
\begin{array}{c | l}
(x_i,y_i)_{i=1}^N \in (\mathbf{R}_{\geq 0} \times \mathbf{R})^N
& \ x_i > 0, \ \forall i=1,\dots,N,\\
& \ 0 \leq y_i < x_i,  \ \forall i=1,\dots,N,\\
& \ \sum_{i=1}^N x_i \leq 1.\\
\end{array} \right\rbrace.
\]
\vspace{+0.2cm}

Putting everything together we get
\[
\widehat{\mu}_{\text{Thu}}(\widehat{B}(\mathcal{P})) = \frac{\text{Leb}(A_1)}{[\text{Stab}(\mathcal{P}) : \text{Stab}_*(\mathcal{P})] \cdot 2^{2g-3+n}}.
\]
\vspace{+0.2cm}

\textit{Frequencies of pair of pants decompositions.} Mirzakhani's Weil-Petersson integration techniques in \cite{Mir07} can be used as in \cite{Mir08b} to show that
\[
c(\mathcal{P}) = \frac{\text{Leb}(A_1)}{[\text{Stab}(\mathcal{P}) : \text{Stab}_*(\mathcal{P})]}.
\]
\vspace{+0.2cm}

\textit{Conclusion.} From the computations above we deduce the following result.\\

\begin{theorem}
	For all $g,n \in \mathbf{Z}_{\geq 0}$ with $2 -2g- n < 0$,
	\[
	r_{g,n} = \frac{1}{2^{2g-3+n}}.
	\]
\end{theorem}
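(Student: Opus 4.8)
The plan is simply to divide the two explicit formulas derived just above for a pair of pants decomposition $\mathcal{P} = \{\gamma_1,\dots,\gamma_N\}$ of $S_{g,n}$. Recall from the proof of Theorem \ref{counting_quadratic_differentials_intro} that $r_{g,n} := \widehat{\mu}_{\text{Thu}}(\widehat{B}(\gamma))/c(\gamma)$ is independent of the integral multi-curve $\gamma$, so it is enough to evaluate it on $\gamma = \mathcal{P}$.

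First I would collect the chain of reductions from the preceding subsection: pass to the intermediate cover by $\text{Stab}_*(\mathcal{P})$; use Proposition \ref{relating pushforwards} and the no escape of mass arguments to identify $\widetilde{\mu}_{\text{Thu}}(\widetilde{B}(\mathcal{P}))$ with $\lim_{L\to\infty}\#I_L/L^{6g-6+2n}$; and evaluate this limit via the index $2^{2g-3+n}$ of the Dehn-Thurston semigroup $\Lambda\subseteq(\mathbf{Z}_{\geq 0}\times\mathbf{Z})^N$ together with a standard Euclidean lattice count. This produces
\[
\widehat{\mu}_{\text{Thu}}(\widehat{B}(\mathcal{P})) = \frac{\text{Leb}(A_1)}{[\text{Stab}(\mathcal{P}):\text{Stab}_*(\mathcal{P})]\cdot 2^{2g-3+n}},
\]
with $A_1\subseteq(\mathbf{R}_{\geq 0}\times\mathbf{R})^N$ the region described above. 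In parallel, Mirzakhani's Weil-Petersson integration techniques as in \cite{Mir08b} give
\[
c(\mathcal{P}) = \frac{\text{Leb}(A_1)}{[\text{Stab}(\mathcal{P}):\text{Stab}_*(\mathcal{P})]}.
\]
Dividing, the factor $\text{Leb}(A_1)$ (positive and finite, as $A_1$ is bounded with nonempty interior) and the finite index $[\text{Stab}(\mathcal{P}):\text{Stab}_*(\mathcal{P})]$ cancel, leaving $r_{g,n} = 1/2^{2g-3+n}$.

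The cancellation itself is trivial; the real obstacles lie in the two inputs. On the geometric side one must justify no escape of mass in Dehn-Thurston coordinates so that $\widetilde{\mu}^L\to\widetilde{\mu}_{\text{Thu}}$ can be tested on the non-compact set $\widetilde{B}(\mathcal{P})$, and one must correctly compute the index of $\Lambda$: there is one even-parity condition per complementary region of $S_{g,n}\setminus\mathcal{P}$, of which there are $2g-2+n$, with exactly one relation among them, yielding $2^{2g-3+n}$. On the enumerative side one must match the leading coefficient of $P(L,\mathcal{P})$ with $\text{Leb}(A_1)/[\text{Stab}(\mathcal{P}):\text{Stab}_*(\mathcal{P})]$ through Mirzakhani's integration formula. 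The conceptual point that makes the theorem come out so cleanly is that the Thurston volume and the frequency are, up to the same index factor, integrals of the \emph{same} Euclidean region $A_1$, the only discrepancy being the parity constraint, which contributes $2^{2g-3+n}$ to the lattice count but is invisible to the Weil-Petersson computation.
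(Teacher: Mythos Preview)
Your proposal is correct and follows essentially the same route as the paper: the theorem is stated there as a direct consequence of the two displayed formulas for $\widehat{\mu}_{\text{Thu}}(\widehat{B}(\mathcal{P}))$ and $c(\mathcal{P})$, obtained exactly through the intermediate-cover reduction, the no-escape-of-mass argument, the Dehn--Thurston lattice count with index $2^{2g-3+n}$, and Mirzakhani's integration formula for $c(\mathcal{P})$. Your summary of where the work lies and why the cancellation occurs matches the paper's reasoning precisely.
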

\vspace{+0.2cm}

\section{Examples}

The examples that follow are based on the work \cite{Mir08b} of Mirzakhani.\\

\textit{Genus 2 with no punctures.} Simple closed curves on $S_{2,0}$ are of one of two possible topological types: separating or non-separating. The frequency of separating simple closed curves $\gamma_1$ on $S_{2,0}$ is given by $c(\gamma_1) = \frac{1}{27648}$. The frequency of non-separating simple closed curves $\gamma_2$ on $S_{2,0}$ is given by $c(\gamma_2) = \frac{1}{576}$. By Theorem \ref{counting_quadratic_differentials_2_intro}, it follows that
\[
\lim_{L \to \infty} \frac{s(\gamma_1,*,L)}{s(\gamma_2,*,L)} = \frac{c(\gamma_1)}{c(\gamma_2)} = \frac{1}{48}.
\]
Roughly speaking, square-tiled surfaces in $Q\mathcal{M}_{2,0}$ with one horizontal cylinder and many squares have a $1/49$ chance of having separating horizontal core curve.\\

Analogously, by Theorem $\ref{counting_quadratic_differentials_intro}$, it follows that
\[
\lim_{L \to \infty} \frac{s(\gamma_1,\gamma_1,L)}{s(\gamma_2,\gamma_2,L)} = \frac{c(\gamma_1)^2}{c(\gamma_2)^2} = \frac{1}{2304}.
\]
Roughly speaking, square-tiled surfaces in $Q\mathcal{M}_{2,0}$ with one horizontal cylinder, one vertical cylinder, and many squares are $2304$ times more likely to have separating horizontal and vertical core curves rather than non-separating ones.\\

\textit{Genus 0 with punctures. } Let $\gamma_i$ be a simple closed curve on $S_{0,n}$ that cuts the surface into two disks, one with $i$ punctures and the other one with $n-i$ punctures. For simplicity, assume $2 < 2i < n$. The frequency of simple closed curves on $S_{0,n}$ of topological type $[\gamma_i]$ is given by
\[
c(\gamma_i) = \frac{1}{2^{n-4} (i-2)! (n-i-2)!(2n-6)}.
\]
By Theorem \ref{counting_quadratic_differentials_intro}, it follows that
\[
\lim_{L \to \infty} \frac{s(\gamma_i,*,L)}{s(\gamma_j,*,L)} = \frac{c(\gamma_i)}{c(\gamma_j)} = \frac{{n-4 \choose i-2}}{{n-4 \choose j-2}}.
\]
\vspace{+0.2cm}

\textit{Genus $g$ with no punctures.} Let $\gamma_i$ be a simple closed curve on $S_{g,0}$ that cuts the surface into two pieces, one of genus $i$ and the other of genus $g-i$. For simplicity, assume $2 < 2i < g$. The frequency of simple closed curves on $S_{g,0}$ of topological type $[\gamma_i]$  is given by
\[
c(\gamma_i) = \frac{1}{2^{3g-2}  24^g  i! (g-i)! (3i-2)!  (3(g-i)-2)!  (6g-6)}
\]
By Theorem \ref{counting_quadratic_differentials_intro}, it follows that
\[
\lim_{L \to \infty} \frac{s(\gamma_i,*,L)}{s(\gamma_j,*,L)} = \frac{c(\gamma_i)}{c(\gamma_j)} = \frac{{g \choose i}  {3g-4 \choose 3i-2}}{{g \choose j} {3g-4 \choose 3j-2}}.
\]
\vspace{+0.2cm}

\bibliographystyle{amsalpha}

\bibliography{bibliography}

\end{document}